\documentclass[11pt,a4paper]{article}
\usepackage[colorlinks=true,linkcolor=blue,citecolor=blue,urlcolor=black,]{hyperref}
\usepackage{physics}
\usepackage{amsmath}
\usepackage{amsthm}
\usepackage{tikz}
\usepackage{eso-pic}
\usepackage{fancyhdr}
\usepackage{amssymb}
\usepackage{fancyhdr}
\usepackage{cite}
\usepackage{tikz-cd}
\usepackage{fontspec}
\usepackage{hyperref}
\usepackage{listings}
\usepackage{geometry}
\usepackage{float}
\usepackage{makecell}

\usepackage[numbers,sort]{natbib}

\numberwithin{equation}{section}
\title{Characterisation of some multivalued harmonic functions on $\R^{n}$}
\author{Yang Li, Parsa Mashayekhi, and Yichen Zhang}
\date{\today}

\newtheorem{thm}{Theorem}[section]
\newtheorem{lem}[thm]{Lemma}

\newtheorem{eg}[thm]{Example}

\newtheorem{cor}[thm]{Corollary}

\newtheorem{prop}[thm]{Proposition}
\theoremstyle{definition}
\newtheorem{rmk}[thm]{Remark}

\newtheorem{Def}[thm]{Definition}

\newcommand{\ie}{\emph{i.e.} }
\newcommand{\cf}{\emph{cf.} }

\newcommand{\R}{\mathbb{R}}

\newcommand{\bmh}{\boldsymbol{h}}
\newcommand{\bma}{\boldsymbol{a}}

\newcommand{\C}{\mathbb{C}}
\newcommand{\Z}{\mathbb{Z}}

\begin{document}
\maketitle
\begin{abstract}
Using different approaches, Simon Donaldson and Dashen Yan recently constructed $\mathbb Z_2$-harmonic functions on $\mathbb R^3$ and on $\mathbb R^n$ for $n\geqslant 3$, respectively. Their examples have quadratic growth at infinity and $\mathcal{O}(r^{3/2})$ local growth near a smooth codimension-two branching set $\Sigma$. We show that these properties uniquely characterise such $\mathbb Z_2$-harmonic functions up to rigid motions of $\mathbb R^n$.
\end{abstract}

\section{Background and Introduction}
    Multivalued harmonic functions (resp. $1$-forms, spinors) appear ubiquitously in modern differential geometry. For instance,

\begin{enumerate}
\item In Almgren's regularity theory, multivalued harmonic functions appear as graphical approximations to area minimising currents converging to some limiting current with multiplicity.

    \item  Taubes \cite{Taubes-PSL2C,Taubes-ZeroLoci} and Haydys--Walpuski \cite{haydys2015compactness} studied  non-compactness phenomena for gauge-theoretic equations in three and four dimensions, such as flat $PSL(2,\C)$ connections, the Vafa-Witten equation, and generalised Seiberg-Witten equations with multiple spinors, where $\Z_2$-harmonic 1-forms/spinors arise as blow up limits.

    \item  In calibrated geometry, Siqi He \cite{He-BranchedSLag} recently constructed branched deformations of special Lagrangians, starting from $\Z_2$-valued harmonic 1-forms with smooth branching locus, satisfying some non-degeneracy conditions.

    \item  In Donaldson's programme \cite{Donaldson-Adiabatic} for adiabatic limits of $G_2$-manifolds collapsing along a coassociative K3 fibration, the limiting data is encoded by a maximal submanifold with local branching behaviour along some links in $S^3$, and $\Z_2$-harmonic functions can be viewed as local linearised models for these maximal submanifolds.
\end{enumerate}

\begin{rmk}
There has been substantial progress on $\Z_2$ harmonic functions/1-forms/spinors.
\begin{itemize}
    \item The \emph{deformation theory} was developed by Donaldson \cite{Donaldson-Deformation} for $\Z_2$-harmonic functions and $1$-forms in general dimensions, and by Takahashi \cite{Takahashi-Moduli} and Parker \cite{Parker-Deformation}  for $\Z_2$-harmonic spinors in dimension $3$ with embedded curves as branching sets, using different approaches. Related index theory for embedded curves and graphs was developed by Takahashi \cite{Takahashi-Index} and Haydys--Mazzeo--Takahashi \cite{Haydys-Mazzeo-Takahashi-IndexGraph}.

    \item (\emph{Existence}) Doan--Walpuski \cite{Doan-Walpuski} proved the existence of $\Z_2$-harmonic spinors by studying wall-crossing phenomena for the Seiberg--Witten equation in dimension $3$. He--Parker \cite{He-Parker} and Salm \cite{Salm} established gluing results for $\Z_2$-harmonic $1$-forms and spinors. Various examples were constructed by Taubes--Wu \cite{Taubes-Wu}, Weifeng Sun \cite{Sun-Pell}, Haydys--Mazzeo--Takahashi \cite{Haydys-Mazzeo-Takahashi-Examples}, Donaldson \cite{Donaldson-Twistor} and Dashen Yan \cite{Yan-Construction}. 
    \item The \emph{deformation rigidity} of the Taubes-Wu homogeneous solutions on $\R^3$  was obtained by Chen--He \cite{Chen-He} and more recently generalised by Haydys--He--Salm \cite{Haydys-He-Salm}. 
\end{itemize}
\end{rmk}

In this paper we study $\Z_2$-harmonic functions $(u, \Sigma)$ on $\R^n$, where $\Sigma\subset \R^n$ is a real codimension-two branching set, and $u$ is a harmonic section of a flat real line bundle $\mathcal{L}$ over $\R^n\setminus\Sigma$, with $\Z_2$-monodromy around any small loop linking $\Sigma$. Locally around $\Sigma$, the function $u$ has an asymptotic expansion
\[
 u(t,z)=\operatorname{Re}({\bf{A}}(t)z^{1/2})+\operatorname{Re}({\bf{B}}(t)z^{3/2})+\mathcal{O}(|z|^{2+\alpha})
\]
where ${\bf{A}}\in \Gamma(N_\Sigma^{-1/2}),{\bf{B}}\in \Gamma(N_\Sigma^{-3/2})$ are smooth sections of certain bundles over $\Sigma$, and $z$ is the complex coordinate normal to $\Sigma$, for some $0<\alpha<1/2$. However, for certain applications to special Lagrangians and branched maximal submanifolds, one naturally needs to impose the condition that the coefficient function $\bf{A}$ vanishes along the branching locus $\Sigma$, or equivalently $u=\mathcal{O}(r^{3/2})$ for $r=\text{dist}(\cdot, \Sigma)$. Thus $\Sigma$ should be viewed as a nonlinear free boundary that needs to be solved along with $u$, rather than prescribed \textit{a priori}.

Recently, Donaldson \cite{Donaldson-Twistor} constructed certain explicit solutions on $\R^3$, while Yan \cite{Yan-Construction} constructed such solutions on $\R^n$ for all $n\geqslant 3$, using different methods.

\begin{thm}
Let $n\geqslant 3$. 
There exist $\Z_2$-harmonic functions $u$ on $\R^n$, with branching sets $\Sigma$ being ellipsoids, such that $u=\mathcal{O}(\textnormal{dist}(\cdot, \Sigma)^{3/2})$ along $\Sigma$. 
These $u$ are asymptotic at infinity to non-degenerate trace-free quadratic forms of index $1$ or $n-1$. 
\end{thm}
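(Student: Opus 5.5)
We construct the examples explicitly, following the ellipsoidal-coordinate picture behind Donaldson's construction and Yan's higher-dimensional version. We look for $u$ as a branched harmonic function whose branching set is a codimension-two ellipsoid
\[
\Sigma=\Big\{x\in\R^n:\ x_n=0,\ \sum_{i=1}^{n-1}\frac{x_i^2}{a_i^2}=1\Big\}.
\]
$\Sigma$ bounds the degenerate ellipsoid $D=\{x_n=0,\ \sum x_i^2/a_i^2\le 1\}$. The double cover of $\R^n\setminus\Sigma$ is obtained by cutting along $D$, so a $\Z_2$-harmonic function is the same as a harmonic function $v$ on $\R^n\setminus D$ that is odd under the deck transformation. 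Concretely, $v$ takes opposite boundary values on the two faces $D^\pm$, and its normal derivatives match accordingly. A convenient ansatz is $u=Q(x)+w(x)$, where:
\begin{itemize}
\item $Q$ is a trace-free quadratic form that is diagonal in the same axes, $Q=\sum_i q_i x_i^2$ with $\sum q_i=0$;
\item $w$ is a correction built from ellipsoidal harmonics (Lam\'e/Niven functions in confocal ellipsoidal coordinates $\lambda$) adapted to the family confocal to $D$.
\end{itemize}

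The key steps are as follows.

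\emph{Step 1.} Write the exterior harmonic functions of degree $\le 2$ as Newtonian-type potentials of $D$. These are the classical functions $\int_{\lambda}^\infty \big(\text{polynomial in } x_i^2/(a_i^2+s)\big)\,\frac{ds}{\prod\sqrt{a_i^2+s}}$, with $\lambda$ the confocal coordinate vanishing on $D$. Using $\sqrt{\lambda}$ as a square-root type coordinate near $\Sigma$ (since $\lambda\sim r$ near $\Sigma$ and the sign flip across $D$ corresponds to $\sqrt\lambda\mapsto-\sqrt\lambda$), the expression
\[
\sqrt{\lambda}\times(\text{smooth in ellipsoidal coordinates})
\]
yields genuinely two-valued harmonic functions branched exactly along $\Sigma$. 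In this way we obtain a finite-dimensional family of $\Z_2$-harmonic functions with branching on $\Sigma$ and at most quadratic growth.

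\emph{Step 2.} Expand near $\Sigma$ as $u=\operatorname{Re}(\mathbf A z^{1/2})+\operatorname{Re}(\mathbf B z^{3/2})+\dots$. The coefficient $\mathbf A$ is a section over $\Sigma$ which, by the symmetry of the ansatz, lies in a finite-dimensional space spanned by the restrictions of $1$ and the $x_i^2$.

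\emph{Step 3.} Impose $\mathbf A\equiv 0$. This is a finite linear system in the free coefficients (the $q_i$, the correction coefficients, and the semi-axes $a_i$). We solve it and check that the solution space is nonzero.

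\emph{Step 4.} Verify non-degeneracy of the leading coefficient $\mathbf B$. Read off the asymptotic quadratic form and check that it is non-degenerate and trace-free of index $1$ or $n-1$ for an open set of the $a_i$. The index conditions should come from sign constraints on the integrals $\int_0^\infty \frac{ds}{(a_i^2+s)\prod_j\sqrt{a_j^2+s}}$.

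The main obstacle is Step 3, together with showing that the resulting $u$ is not identically $\mathcal O(r^{1/2})$-degenerate. Vanishing of $\mathbf A$ is an overdetermined-looking condition, since it asks a function on the $(n-2)$-dimensional $\Sigma$ to vanish. To handle it, we first reduce via the ellipsoidal separation of variables: on $\Sigma$, $\mathbf A$ becomes a polynomial of degree $\le1$ in the variables $x_i^2$ restricted to $\sum x_i^2/a_i^2=1$. This reduces the condition to $n-1$ scalar equations, which we solve by choosing the $n-1$ correction coefficients. The quadratic $Q$ and the axes $a_i$ then remain free, and we read off the index from the sign pattern of the resulting $q_i$. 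In dimension $3$, the construction should be compared with Donaldson's explicit formulas as a consistency check.
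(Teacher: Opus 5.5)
Your route (potentials of the flat ellipsoid $D$ in confocal coordinates) differs from the paper's. The paper does not prove this statement itself: it cites Donaldson and Yan, and in Example~\ref{eg:z2} it records these functions as rescaled limits of Lawlor necks. Your route can work, but Step~3 as written is wrong, and with it your derivation of the index.

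The correction $w$ has no free coefficients once $\Sigma$ is fixed. On a sheet over $\R^n\setminus D$, your $u$ is even in $x_n$. The $\Z_2$ gluing across $D$ (both $u$ and $\partial_{x_n}u$ change sign) then forces $u|_D=0$. So $w=u-Q$ solves the exterior Dirichlet problem with data $-Q|_D$ and limit $-M_0$ at infinity, which is uniquely solvable for $n\geqslant 3$; this is the Sun/Haydys--Mazzeo--Takahashi uniqueness recalled in the paper. For fixed $a$ the only unknowns are therefore $(q_1,\dots,q_{n-1},M_0)$, and your $n-1$ equations $\mathbf A\equiv 0$ cut them down to a single line. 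In other words, $Q$ is determined by the $a_i$ up to scale, exactly as the bijection $\Phi_b$ asserts. If $Q$ really stayed free, you would obtain examples of every index, contradicting Corollary~\ref{cor:indexv}. So the index cannot be ``read off'' from freely chosen $q_i$: it is the output of solving the system, and Step~4 never computes it. (The conclusion in fact holds for every $a$, not just an open set.)

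Two smaller inaccuracies. First, it is the ellipsoidal (Galin--Dyson) structure, not reflection symmetry, that places $\mathbf A$ in the span of $1,x_i^2$ on $\Sigma$. Second, $\lambda$ vanishes on all of $D$; near $\Sigma$ it is comparable to $\rho\cos^2(\theta/2)$, not to $\rho$. Also, ``$\sqrt\lambda\times$ smooth'' is not harmonic per se; two-valued harmonicity comes from the gluing conditions across $D$.

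The missing computation is classical.
\begin{itemize}
\item Take the single-layer density $\sigma=\sqrt{1-\sum_{k<n}x_k^2/a_k^2}$ on $D$, the flat limit of a homogeneous solid ellipsoid. Its potential $w$ is harmonic off $D$, even in $x_n$, and $O(|x|^{2-n})$.
\item The Ferrers--Dyson formula gives $w|_D=\gamma_0-\sum_{k<n}\gamma_kx_k^2$, where, up to a common positive constant, $\gamma_0=\int_0^\infty\frac{ds}{\sqrt{s\prod_{j<n}(a_j^2+s)}}$ and $\gamma_k=\int_0^\infty\frac{ds}{(a_k^2+s)\sqrt{s\prod_{j<n}(a_j^2+s)}}>0$. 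These are your Step~4 integrals with $a_n=0$.
\item Set $u=\sum_{k<n}\gamma_kx_k^2-(\sum_{k<n}\gamma_k)x_n^2-\gamma_0+w$. It vanishes on $D$, so it extends, by $-u$ on the other sheet, to a $\Z_2$-harmonic function.
\item On $D^\pm$ one has $\partial_{x_n}u=\mp c\,\sigma$, which vanishes like $\rho^{1/2}$ at $\Sigma$. A nonzero $\mathbf A$ would instead force $|\partial_{x_n}u|\sim\frac12|\mathbf A|\rho^{-1/2}$ on $D$. Hence $\mathbf A\equiv 0$, and also $\mathbf B\neq 0$.
\item Positivity of the $\gamma_k$ gives index $1$ for every $a$, and $-u$ gives index $n-1$.
\end{itemize}
With $s=x^2$ and $a=\bmh$, the $\gamma$'s are the paper's $b_0(\bmh),b_k(\bmh)$ up to the factor $\frac14\prod_jh_j$. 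This is consistent with Example~\ref{eg:z2}: at $s=0$ the fibre coordinate is $\xi=(0,\dots,0,w_n)$, i.e.\ $\partial_{x_n}u=\pm\sqrt{1-\sum_kx_k^2/h_k^2}$ on $D$.

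Repaired this way, your argument is a self-contained, elementary existence proof. The Lawlor-neck route instead directly yields the Lagrangian graph $L'_\delta$ and its link to $L_{\phi,A}$, which the paper's later Floer-theoretic comparison relies on.
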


\begin{rmk}
As observed by Dashen Yan \cite{Yan-Construction}, such $\Z_2$-harmonic functions can  be extracted as the limits for families of Lawlor necks \cite{Lawlor} with small characterising angles, double branched over $\R^n$ with ellipsoids as branching sets. 
\end{rmk}

Our goal is to prove the global uniqueness result. 

\begin{thm}\label{mainthm}
Let $n\geqslant 3$. Suppose $u$ is a $\Z_2$-harmonic function on $\R^n$ satisfying the following conditions:
\begin{itemize}
    \item The branch locus $\Sigma\subset |du|^{-1}(0)$ is a non-empty smoothly embedded compact codimension--$2$ submanifold; 
    \item Around $\Sigma$, the coefficient ${\bf{A}}\in \Gamma(N_\Sigma^{-1/2})$ in the leading asymptotic expansion of $u$ is identically zero;
    \item At the infinity of $\R^n$, 
    \begin{equation}
        \label{eqn:asym-of-u}
 u \sim \pm V + \mathcal{O}(|x|^\mu),
    \end{equation}
where $V$ is a non-degenerate homogeneous degree $2$ polynomial and the asymptotic rate satisfies $\mu<2$. 
\end{itemize}

Then, up to rigid motions of $\R^n$, $u$ agrees with the model $\Z_2$-harmonic function $g_{\bmh,\delta}$ constructed by Donaldson and Yan (see below Example \textnormal{\ref{eg:z2}}).
\end{thm}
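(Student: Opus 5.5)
The plan is to reduce the problem to a rigid exterior boundary value problem. The steps are: first pin down $u$ at infinity, then show that the branching locus bounds a flat "slit", and finally classify solutions with a flat slit by ellipsoidal harmonics, where the condition ${\bf A}\equiv 0$ should force the slit to be exactly the Donaldson--Yan ellipsoid.

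\textbf{Step 1: structure at infinity.} Since $\Sigma$ is compact of codimension two, a large sphere $S^{n-1}$ with $n\geqslant 3$ lies in $\R^n\setminus\Sigma$ and is simply connected. So the flat bundle $\mathcal L$ is trivial outside a large ball, and after choosing the sign $u$ is an honest harmonic function there. Expanding in spherical harmonics and using \eqref{eqn:asym-of-u} with $\mu<2$ gives
\[
u=V+\ell+c+\mathcal O(|x|^{2-n}),
\]
with $\ell$ linear and $c$ constant. A translation removes $\ell$, and a rotation diagonalises $V=\sum_i \lambda_i x_i^2$ with $\sum\lambda_i=0$. This leaves the axes of $V$ and the constants $c$ and the $\mathcal O(|x|^{2-n})$ coefficients as the only data. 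The monodromy of $\mathcal L$ then defines a class in $H_{n-1}(\R^n,\R^n\setminus B_R;\Z_2)$ dual to $\Sigma$, so $\Sigma$ bounds a compact hypersurface $D$ (mod $2$). Cutting along $D$ makes $u$ single-valued on $\R^n\setminus D$, changing sign across $D$.

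\textbf{Step 2: $\Sigma$ bounds a flat disc.} This is the main obstacle. The model suggests that $D$ can be taken to lie in the hyperplane orthogonal to the axis of the distinguished (index $1$ or $n-1$) eigenvalue of $V$, say $\{x_n=0\}$, with $u$ odd under $x_n\mapsto -x_n$.

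My first attempt will be a moving-plane / reflection argument applied to $u$ and to $\tilde u(x',x_n)=-u(x',-x_n)$. Both have the same asymptotics, since $V$ is even in $x_n$. For the reflected planes I would compare $u$ with its reflection on the double cover, where both are harmonic and $\mathcal O(r^{3/2})$ at $\Sigma$. The vanishing of ${\bf A}$ is used here: it makes the difference harmless near the branch set, so no $r^{1/2}$ singular terms obstruct the maximum principle. I would then push the plane until it reaches $\{x_n=0\}$ and conclude that $\Sigma\subset\{x_n=0\}$ and $u$ is odd.

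A fallback is to study the zero set $\{u=0\}$ near $\Sigma$ using the $z^{3/2}$ expansion. It consists of three sheets meeting at $\Sigma$, and one would use the sign of $\partial_{x_n}u$ to show that one sheet is a graph.

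\textbf{Step 3: classification for a flat slit.} Once $D\subset\{x_n=0\}$ is an $(n-1)$-dimensional domain with $\partial D=\Sigma$, the function $u$ restricted to $x_n>0$ is harmonic with $u=0$ on $\{x_n=0\}\setminus D$. On $D$ it is the restriction of a function that extends oddly.

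The condition ${\bf A}=0$ says that $u$ and $\partial_{x_n}u$ vanish to order $3/2$ at $\partial D$. This is an overdetermined condition, analogous to the classical problem of "null quadrature domains" or the characterisation of ellipsoids by a quadratic Newtonian potential. I would differentiate twice: $\partial_{x_n}u$ restricted to $D$ gives a density whose single-layer-type potential has, by Step 1, a quadratic-plus-decaying profile. Then the Dive/Nikliborc-type theorem, that only ellipsoids have polynomial interior potentials, forces $D$ to be an ellipsoid, with axes determined by the $\lambda_i$.

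Uniqueness of $u$ given $D$ and $V$ then follows from the maximum principle on the half-space, because the difference of two solutions is $o(|x|^2)$ and harmonic and vanishes on the boundary. Matching with the explicit $g_{\bmh,\delta}$ of Example~\ref{eg:z2} finishes the proof.
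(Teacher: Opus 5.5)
Your route (flatten the branch locus, then classify the resulting slit problem) is genuinely different from the paper's. It has a genuine gap at Step 2, which you correctly flag as the main obstacle; Step 1 is fine. Step 2 is essentially the whole theorem, namely planarity of an a priori unknown free boundary. The moving-plane mechanism has nothing to run on for $\Z_2$-valued functions.

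Let $\Sigma^t$ and $u^t$ be the reflections of $\Sigma$ and $u$ in $\{x_n=t\}$. For $t>\max_\Sigma x_n$ the cap $\{x_n>t\}$ avoids $\Sigma$ but contains $\Sigma^t$. So in the cap $u$ is single-valued while $u^t$ is branched, and one loop around $\Sigma^t$ turns $u-u^t$ into $u+u^t$. The comparison function is therefore honestly two-valued, not a section of any flat line bundle, and it has no maximum principle or Hopf lemma. The natural single-valued combination $u^2-(u^t)^2$ has Laplacian $2(|\nabla u|^2-|\nabla u^t|^2)$, which has no sign. There is also no positivity of $u$ (it is only defined up to sign) to start or propagate the plane. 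The condition ${\bf A}\equiv 0$ controls growth near $\Sigma^t$, not monodromy, so it does not repair this. The zero-set fallback is only a heuristic.

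Moreover, choosing `the distinguished axis' presupposes that $V$ has index $1$ or $n-1$. For $n\geqslant 4$, trace-free non-degenerate forms of index $2,\dots,n-2$ exist, and excluding them is a real part of the statement. The paper gets this (Corollary \ref{cor:indexv}) from the Imagi--Joyce--Oliveira dos Santos classification applied to $\operatorname{graph}(\delta du)$. It gets the sign $M_0>0$ (Corollary \ref{cor:M0>0}) from a holomorphic triangle. Your proposal addresses neither point.

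Step 3 is also set up incorrectly. On $\R^n\setminus D$ the single-valued branch is $\pm(V-M_0)+\mathcal{O}(|x|^{2-n})$ at infinity, which is even in $x_n$ and nonzero. So the branch cannot be odd, and an odd branch vanishing on $\{x_n=0\}\setminus D$ is incompatible with $V(x',0)=\sum_{k<n}M_kx_k^2$.

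In the model of Example \ref{eg:z2} (take $s=0$ in the parametrisation):
\begin{itemize}
\item the branch is even;
\item $u=0$ and $\nabla_{x'}u=0$ on $D$;
\item $\partial_{x_n}u=\pm(1-\sum_{k<n}x_k^2/h_k^2)^{1/2}$ on $D$, which vanishes to order $1/2$ at $\partial D$, not $3/2$ as you state;
\item $\partial_{x_n}u=0$ on $\{x_n=0\}\setminus D$.
\end{itemize}
This is a Dirichlet-on-the-slit, Neumann-off-the-slit problem with free boundary $\partial D$. The Dive--Nikliborc theorem concerns homogeneous \emph{solid} bodies whose interior Newtonian potential is polynomial. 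It says nothing about a flat slit carrying a variable single-layer density. So `forces $D$ to be an ellipsoid' is itself an unproved free-boundary rigidity statement, not something you can cite.

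By contrast, the paper never proves that $\Sigma$ is planar or symmetric. After fixing the index and $M_0$, it takes the unique model $v$ with the same asymptote and shows $du=\pm dv$. A holomorphic strip between the perturbed compactified graphs has area $\lesssim\delta\Lambda^{-1/4}$, because degree $0$ and $n$ intersections occur only near both branch sets or near infinity, where the potentials nearly agree. On the other hand, monotonicity at a point with $du\neq\pm dv$ forces the area to be $\gtrsim\delta$.
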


\begin{rmk}
Weifeng Sun \cite{Sun-Pell} and Haydys--Mazzeo--Takahashi \cite{Haydys-Mazzeo-Takahashi-Examples} showed that, for any \textit{fixed} $\Sigma$ and any harmonic polynomial on $\R^n$, there exists a unique $\Z_2$-harmonic function in the general class with the prescribed asymptote at infinity. This implies in particular that once the ellipsoid is fixed as the branching set, the $\Z_2$-harmonic functions found by Donaldson and Dashen Yan are uniquely determined by the quadratic asymptote at infinity. In contrast, our result does not assume $\Sigma$ is known \textit{a priori}.
\end{rmk}

\subsection{Proof Strategy}

The proof of Theorem \ref{mainthm} consists of two parts.
\begin{itemize}
    \item \textbf{Analytical Preparations.} We adapt Almgren's frequency function from the study of $Q$-valued harmonic functions (\cf \cite{DeLellis-Spadaro}) and prove a monotonicity formula for the $\Z_2$-harmonic $1$-form $du$, relating the vanishing order around the singular set to the polynomial growth rate at infinity. This forces $du$ to be non-degenerate and to have no zero away from the branching set. Thus we can build an approximately special Lagrangian graph
    $L_\delta:=\operatorname{graph}(\delta du)$, which is 
$C^{1,\alpha}$-regular and embedded.

    \item \textbf{Floer-theoretic part.}
    Our strategy can be viewed as an adaptation of the theorem of Imagi--Joyce--Oliveira dos Santos \cite{IJS} on the classification of special Lagrangians $L$ in $\C^n$ asymptotic 
    to the union of two planes $\Pi_0\cup \Pi_\phi$, 
    \[
    \Pi_0=\R^n,\quad \Pi_\phi= (e^{i\phi_1},\ldots e^{i\phi_n})\R^n,\quad 0<\phi_k<\pi,\quad  k=1,\ldots n.
    \]
    They first partially compactify $\C^n$ into a Liouville manifold $M$, by adding two copies of $\R^n$ at infinity, and compactify the special Lagrangian into an exact, graded compact Lagrangian $\bar{L}$. By analysing the intersection pattern with $\Pi_0, \Pi_\phi$ and the $\R^n$ at infinity, and appealing to the classification of objects in the derived Fukaya category $D^bFuk$, they can show that $\sum\limits_{k=1}^n \phi_k=\pi$ or $(n-1)\pi$, and the compactified Lagrangian agrees with the compactified Lawlor neck in $D^bFuk$. Furthermore, by considering the Lagrangian potential, they can define an analytic invariant $A(L)>0$, which reflects the scale parameter in the family of Lawlor neck models.

Imagi--Joyce--Oliveira dos Santos then prove that $L$ coincides with the appropriate choice of Lawlor neck special Lagrangian, by a version of the Thomas-Yau argument. The core idea is to remove the Lagrangian intersection points of Floer degree $0,n$ inside $\C^n$, which they accomplish by a delicate argument using real analyticity. Floer theory then produces a holomorphic strip with boundary on the Lawlor neck and (a small Hamiltonian perturbation of) the unknown $\bar L$, with corners at the Lagrangian intersection points of degree $0,n$, which necessarily lie at infinity. However, the topological energy formula for holomorphic curves then implies that its area is zero, which is a contradiction unless $L$ coincides with the Lawlor neck.

In our case, given the $\Z_2$ harmonic function $u$, the graph of $\delta du$ for $0<\delta\ll 1$ can be viewed as an approximate special Lagrangian, and we wish to compare this to the model solution $g_{\bmh,\delta}$. We follow Imagi--Joyce--Oliveira dos Santos to introduce the partial compactification, which now \textit{depends on the small parameter $\delta>0$}. As a limiting analogue of the fact that
\[
\sum_{k=1}^n \phi_k= \pi~ \textit{ or}~  (n-1)\pi, \quad A(L)>0, 
\]
we will show that up to translation and rotation, the asymptotic harmonic polynomial at infinity is
\[
\sum_{k=1}^n M_k x_k^2- M_0 ,
\]
where the quadratic part has Morse index $1$ or $n-1$, and $M_0>0$ (resp. $M_0<0$). By adapting the real analyticity argument of Imagi--Joyce--Oliveira dos Santos, and using the harmonic equation instead of the special Lagrangian graph equation, we can remove the Lagrangian intersection points of degree $0,n$, away from a small neighbourhood of the branching locus (resp. the infinity). Near the branching locus (resp. the infinity), we may still have Lagrangian intersections, but we gain some more control on the smallness of the Lagrangian potential. The same Floer theoretic ingredients then produce a \emph{holomorphic curve with small area} of order $o(\delta)$. If the graph of $\delta du$ does not coincide with the model, then an extra monotonicity formula argument gives the reverse \emph{area lower bound}
$\gtrsim \delta$, which is a contradiction.

\end{itemize}

\begin{rmk}[Alternative strategy] \label{rmk:alternativestrategy}
  Given the analytical step, it is conceivable to develop a \textit{non-compact} version of Siqi He's result \cite{He-BranchedSLag}, namely to first perturb the approximate special Lagrangian graph $L_\delta:=\operatorname{graph}(\delta du)$ to a genuine special Lagrangian, and then apply Imagi--Joyce--Oliveira dos Santos \cite{IJS} to deduce uniqueness. A possible technical difficulty is that the two asymptotic planes of $L_\delta$ are \textit{only approximately special Lagrangian} when $\delta \ll 1$. Therefore, besides deforming the branch set $\Sigma$ and dealing with a singular Riemannian metric as in \cite{He-BranchedSLag}, one would also need to correct the ansatz at infinity without introducing large errors, or else work in weighted spaces of multivalued functions allowing growth at infinity. Our treatment instead avoids solving the nonlinear special Lagrangian equation.
\end{rmk}

\textbf{Organisation}. In Section \ref{Sec:preliminaries}, we review the explicit constructions of the Lawlor necks and $\Z_2$-harmonic functions constructed by Donaldson and Yan. In Section \ref{sec:uniquen=2}, we derive a uniqueness result for the case $n=2$ using complex analysis. In Section \ref{Sec:analyticalpreparations} and Section \ref{Sec:floer-theory}, we implement the proof strategy discussed above. At the end, we list two related questions. 

\textbf{Acknowledgements}. Y. L. is supported by the Royal Society URF. P.M.  is supported by the Engineering and Physical Sciences Research Council [EP/S021590/1], the EPSRC Center for Doctoral Training in Geometry and Number Theory (The London School of Geometry and Number Theory), University College London. P.M is also supported by King’s College London during his PhD. Y. Z. is supported by the London School of Geometry and Number Theory Centre for Doctoral Training (CDT), a joint venture between University College London, Imperial College London and King’s College London.

We thank Professor Simon Donaldson for discussions, encouragements and inspirations, and  Richard Thomas and Aleksander Doan for their interest in this work.

\section{Preliminaries}
\label{Sec:preliminaries}
We start by reviewing Lawlor necks and corresponding $\Z_2$-harmonic functions. 

\subsection{Data Matching}

Let
\begin{equation}
    \label{eqn:Cn-CYstr}
    (\omega,\Omega,\lambda)=
    \left(
    \frac{\sqrt{-1}}{2}\sum\limits_{k=1}^n dz_k\wedge d\bar z_k,\,
    dz_1\wedge dz_2\wedge \cdots\wedge dz_n,\,
    {\sum_{k=1}^{n}}\left(\frac{1}{2}x_kdy_k-\frac{1}{2}y_kdx_k\right)
    \right)
\end{equation}
be the standard Calabi--Yau structure on $\C^n$, with a \textit{fixed} choice of Liouville form $\lambda$, where $z_k=x_k+\sqrt{-1}y_k$. For an exact Lagrangian $L\subset \C^n$, we denote its Lagrangian potential by $f_L$, \ie
\[
    df_{L}=\lambda|_{L}.
\]

We first recall the Lawlor necks, an $n$-dimensional family of special Lagrangians in $\C^n$, which admit explicit descriptions \cite{Lawlor,harvey1990spinors}. 
\begin{eg}[Lawlor necks] \label{eg:Lawlor}
    The Lawlor necks $L_{\phi,A}$ are closed embedded exact special Lagrangians in $\C^n$, asymptotic at infinity to the union of two transverse special Lagrangian planes
    \[
        \Pi_0=\R^n,\quad
        \Pi_\phi=(e^{i\phi_1},\cdots,e^{i\phi_n})\R^n,\quad
        0<\phi_k<\pi,\quad
        \sum_{k=1}^n\phi_k=\pi.
    \]

    From the viewpoint of symplectic topology, they can be regarded as realizations of the Lagrangian handle $S^{n-1}\times \R$ appearing in the Lagrangian connected sum construction. This motivates the following ansatz, under which the special Lagrangian condition becomes an ODE system for $z_1(s),\cdots,z_n(s)$ admitting explicit solutions \eqref{eqn:lawlorneckzk}:
    \begin{equation}
        \label{eqn:Lawlorneckansatz}
        L_{\phi,A}:=
        \{(z_1(s)w_1,\cdots,z_n(s)w_n):s\in \R,(w_1,\cdots,w_n)\in S^{n-1}\}.
    \end{equation}
    Let $n>2$. We define the polynomials $p_{\bma}$ and $P_{\bma}$ by
    \[
        p_{\bma}(x)=(1+a_1x^2)\cdots (1+a_nx^2),
        \quad
        P_{\bma}(x)=\frac{p_{\bma}(x)-1}{x^2}.
    \]
    We then define the angle parameters $\phi=(\phi_1,\cdots,\phi_n)$ and the scale parameter $A>0$ of the Lawlor neck $L_{\phi,A}$ by
    \begin{equation}
        \label{eqn:def-lawlor-data}
        \phi_k=a_k\int_{\R}\frac{dx}{(1+a_kx^2)\sqrt{{P_{\bma}(x)}}},
        \quad
        A=\int_\R \frac{dx}{2\sqrt{P_{\bma}(x)}}.
    \end{equation}
    It is easy to check that $\sum\limits_{k=1}^n\phi_k=\pi$. Now, setting
    \begin{equation}
        \label{eqn:lawlorneckzk}
        z_k(s)=e^{i\psi_k(s)}\sqrt{a_k^{-1}+s^2},
        \quad
        \textit{where}\quad
        \psi_k(s):=\int_{-\infty}^s
        \frac{a_kdx}{(1+a_kx^2)\sqrt{P_{{\bma}}(x)}},
    \end{equation}
    gives the desired solution to the ansatz \eqref{eqn:Lawlorneckansatz}.
\end{eg}

We now align the $\Z_2$-harmonic functions  constructed by Donaldson \cite{Donaldson-Twistor} and Yan \cite{Yan-Construction} with Lawlor necks. 

\begin{eg}[$\Z_2$-harmonic functions with quadratic growth, \textup{\cite{Yan-Construction}}]\label{eg:z2}
\label{prop:Z2funcs}
    Let $\bmh=(h_1,\cdots,h_{n-1})>0$ and $\delta>0$. We define the polynomials $q_{\bmh}$ and $Q_{\bmh}$ by
    \[
        q_{\bmh}(x)=(1+h_1^{-2}x^2)\cdots (1+h_{n-1}^{-2}x^2),
        \quad
        Q_{\bmh}(x)=\frac{q_{\bmh}(x)-1}{x^2}.
    \]
    We also introduce the parameters $(b_0(\bmh),\cdots,b_n(\bmh))$ by
    \begin{equation}\label{eqn:Z2-harmonic-data}
        b_0(\bmh)=\frac{1}{4}\int_{\R}\frac{dx}{\sqrt{q_{\bmh}(x)}}>0,
        \quad
        b_k(\bmh)=\frac{1}{4}\int_{\R}\frac{dx}{(h_k^2+x^2)\sqrt{q_{\bmh}(x)}},
        \quad
        b_n(\bmh)=-\sum_{k=1}^{n-1}b_k(\bmh).
    \end{equation}
    Then, for any $\delta>0$, there exists a non-degenerate $\Z_2$-harmonic function
    \[
        g_{\bmh,\delta}=\delta g_{\bmh}
    \]
    on $\R^n$, with branching locus
    \[
        E_{\bmh}
        =
        \left\{
        \sum\limits_{k=1}^{n-1}\frac{x_k^2}{h_k^2}=1,\,
        x_n=0
        \right\},
    \]
    and with asymptotic expansion at infinity
    \begin{equation}
        \label{eqn:Z2-asym-infty}
        g_{\bmh}
        \sim
        \pm\left(
        -b_0(\bmh)+\sum_{k=1}^n b_k(\bmh)x_k^2+O(|x|^{2-n})
        \right).
    \end{equation}

    The multivalued graph $L_{\delta}'=\operatorname{graph}(\delta dg_{\bmh})$ admits an explicit description similar to \eqref{eqn:Lawlorneckansatz}:
    \[
        L_\delta'
        =
        \left\{
        (x,\delta \xi)\in \C^n:
        x_k=w_k\sqrt{h_k^2+s^2},\,
        \xi_k=w_k\beta_k(s)\sqrt{h_k^2+s^2},\,
        x_n=-sw_n,\,
        \xi_n=w_n(1-s \beta_n(s))
        \right\},
    \]
    where $(x,\xi)\in T^*\R^n\cong \C^n$, $s\in \R$, $w=(w_1,\cdots,w_n)\in S^{n-1}$, $1\leqslant k\leqslant n-1$, and
    \[
        \beta_k(s)=\int_0^s\frac{dx}{(h_k^2+x^2)\sqrt{q_{\bmh}(x)}},
        \quad
        \beta_n(s)=-\int_0^s\frac{ Q_{\bmh}(x)dx}{q_{\bmh}(x)+\sqrt{q_{\bmh}(x)}}.
    \]
    In particular, $L_\delta'$ is a smooth closed embedded Lagrangian in $T^*\R^n=\C^n$. 
\end{eg}
For precise definition of the multivalued graph $\operatorname{graph}(\delta dg_{\bmh})$, we refer to Section \ref{sec:multivaluedgraph}. 
\begin{rmk}\label{rmk:scaling}
We summarise several scaling properties of the Lawlor necks and the above $\Z_2$-harmonic functions. 
    \begin{enumerate}
        \item[\textnormal{(1)}] For the Lawlor necks $L_{\phi,A}$, under the coordinate rescaling $z\mapsto \lambda z$ in $\C^n$, we have
        \[
            \bma \mapsto \lambda^{-2}\bma,
            \quad
            \phi\mapsto \phi,
            \quad
            A\mapsto \lambda^2A.
        \]
        Heuristically, the scale parameter $A$ corresponds to the symplectic area of a holomorphic triangle formed by $\Pi_0$, $L_{\phi,A}$ and $\Pi_\phi$.
        On the side of $\Z_2$-harmonic functions, this corresponds to the rescaling $(\bmh,\delta)\mapsto (\lambda h,\lambda \delta)$, and the constant term $\delta M_0$ has an analogous meaning for the approximate Lagrangian graphs $L_\delta'=\operatorname{graph}(\delta dg_{\bmh})$ (\cf Corollary \ref{cor:M0>0}).  
        
        \item[\textnormal{(2)}] For the $\Z_2$-harmonic functions $g_{\bmh,\delta}$, under the coordinate rescaling $x\mapsto \lambda x$ in $\R^n$, we have
        \[
            E_{\bmh}\mapsto E_{\lambda\bmh},
            \quad
            b_0(\bmh)\mapsto \lambda b_0(\bmh),
            \quad
            b_k(\bmh)\mapsto \lambda^{-1}b_k(\bmh),
            \quad
            1\leqslant k\leqslant n,
        \]
        and hence
        \[
            g_{\lambda \bmh,\delta}(x)
            =
            \lambda g_{\bmh,\delta}(x/\lambda)
            \sim
            \pm\left(
            -\lambda\delta b_0(\bmh)
            +
            \delta\lambda^{-1}\sum_{k=1}^{n}b_k(\bmh)x_k^2
            +
            \mathcal{O}(\lambda^{n-1}\delta |x|^{2-n})
            \right).
        \]

        \item[\textnormal{(3)}] For the $\Z_2$-harmonic functions $g_{\bmh,\delta}$, if $\bmh$ is fixed, multiplication by a number $\delta\ll 1$ gives a family of \textit{approximate} special Lagrangian multivalued graphs
        $L_{\delta}':=\operatorname{graph}(\delta dg_{\bmh})$
        for $\delta \ll 1$ (\cf Proposition \ref{prop:appslag}), which are double branched covers over $\R^n$ with branching set $E_{\bmh}$. This reflects the fact that non-degenerate $\mathbb Z_2$-harmonic $1$-forms describe branched deformations of the special Lagrangian $\R^n\subset T^*\R^n=\C^n$.

        \item[\textnormal{(4)}] For the Lawlor necks $L_{\phi,A}$, the $U(n)$-rotation
            $U=\operatorname{diag}(e^{i(\pi-\phi_1)},\cdots,e^{i(\pi-\phi_n)})$
        maps $\Pi_0,\Pi_{\phi}$ to $\Pi_{\pi-\phi},\Pi_0$ respectively, where
        $\pi-\phi:=(\pi-\phi_1,\cdots,\pi-\phi_n)$.
        For each $A>0$, we then define
        $L_{\pi-\phi,-A}:=UL_{\phi,A}$,
        with $\sum\limits_{k=1}^n(\pi-\phi_k)=(n-1)\pi$, asymptotic to $\Pi_{\pi-\phi}\cup \Pi_0$ at infinity. On the side of $\Z_2$-harmonic functions, this corresponds to switching the two branches of the multivalued graph $L_{\delta}'$ (\cf Corollary \ref{cor:indexv}). 
    \end{enumerate}
\end{rmk}

\begin{prop}[Parameter Matching]
\label{prop: para-match}
    We have the following parameter matching maps.
\begin{enumerate}
    \item[\textnormal{(1)}] \textnormal{(Lawlor necks: $\bma$ to $(\phi,A)$)} There exists a one-to-one correspondence $\Phi_{a}$ between the following sets:
    \[
        \Phi_{a}:\R_{>0}^n
        \to
        \left\{
        (\phi,A):
        \sum_{k=1}^{n}\phi_k=\pi,\,
        \phi_k>0,\,
        A>0
        \right\},
        \quad
        \bma \mapsto (\phi,A),
    \]
    defined by \eqref{eqn:def-lawlor-data}.

    \item[\textnormal{(2)}] \textnormal{($\Z_2$-harmonic functions: $(\bmh,\delta)$ to the asymptote at infinity)}
    There exists a one-to-one correspondence $\Phi_{b}$ between the following sets:
    \[
        \Phi_{b}:
        \R^{n-1}_{>0}\times \R_{>0}
        \to
        \left\{
        (b_0,b_1,\cdots,b_{n-1},b_n):
        b_0,\cdots,b_{n-1}>0,\,
        b_n=-\sum_{k=1}^{n-1}b_k
        \right\},
    \]
    \[
        (\bmh,\delta)\mapsto \delta(b_0(\bmh),\cdots,b_n(\bmh)),
    \]
    defined by \eqref{eqn:Z2-harmonic-data}.
\end{enumerate}
\end{prop}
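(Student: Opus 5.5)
The plan is to use the scaling symmetries in Remark~\ref{rmk:scaling} to reduce each statement to a bijection between a projectivised parameter space and an open simplex. That reduced bijection will be established by a properness-plus-injectivity argument.

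\textbf{Part (1).} Under $\bma\mapsto\lambda^{-2}\bma$ the angles $\phi$ are invariant and $A\mapsto\lambda^2A$.
\begin{itemize}
\item It therefore suffices to show that $[\bma]\mapsto\phi$ is a bijection from $\R^n_{>0}/\R_{>0}$ onto the open simplex $\{\phi_k>0,\ \sum\phi_k=\pi\}$.
\item Given this, $A$ fixes the scale uniquely, because $\lambda\mapsto\lambda^2A$ is a bijection of $\R_{>0}$.
\end{itemize}
The identity $\sum\phi_k=\pi$ follows by differentiating $\arctan$-type primitives. Concretely, $\sum_k a_k/(1+a_kx^2)=p_{\bma}'(x)/(2x\,p_{\bma}(x))$, and after the substitution $y=\sqrt{P_{\bma}(x)}\,x$ the integral becomes $\int dy/(1+y^2)$.

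For the bijection $[\bma]\mapsto\phi$ I would argue in three steps.
\begin{itemize}
\item \emph{Properness.} If $a_k/\max_j a_j\to0$ then $\phi_k\to0$, by dominated convergence on the integral defining $\phi_k$. So the map extends continuously to the closed simplex of normalised $\bma$, sending each boundary face to the corresponding face.
\item \emph{Surjectivity.} Given properness, a degree argument (a face-preserving self-map of the simplex has degree one) gives surjectivity.
\item \emph{Injectivity.} I would exhibit $\phi$ as the gradient of a strictly convex function of $t_k=\log a_k$, restricted to the hyperplane $\sum t_k=0$. A natural candidate is $F(t)=\log\int_\R x^{-1}\log\bigl(\text{something in }p_{\bma}\bigr)\,dx$, or an Abel-type integral. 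Strict convexity would come from Cauchy--Schwarz/H\"older applied to the integrands. This is classical (Lawlor; Harvey), and I would follow that computation.
\end{itemize}

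\textbf{Part (2).} Under $\bmh\mapsto\lambda\bmh$ we have $b_0\mapsto\lambda b_0$ and $b_k\mapsto\lambda^{-1}b_k$ for $k\ge1$, while $\delta$ multiplies every $b_j$. Given a target $(b_0,\dots,b_{n-1})$, the steps are:
\begin{itemize}
\item First determine $[\bmh]\in\R^{n-1}_{>0}/\R_{>0}$ from the ratios $[b_1:\dots:b_{n-1}]$, which is the core claim.
\item Then determine $\lambda$ from the scale-invariant combination $b_0/b_1$, which scales like $\lambda^2$.
\item Finally determine $\delta$ from the overall size.
\end{itemize}
This shows $\Phi_b$ is a bijection once the core claim holds. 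Positivity of $b_0,\dots,b_{n-1}$ is clear from \eqref{eqn:Z2-harmonic-data}, and $b_n$ is forced by its definition.

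For the core claim, write $c_k=h_k^{-2}=e^{t_k}$ and $G(t)=\int_\R q_{\bmh}(x)^{-1/2}\,dx$. Since $\frac{1}{h_k^2+x^2}=\frac{c_k}{1+c_kx^2}$ and $\partial_{t_k}\log q_{\bmh}=\frac{c_kx^2}{1+c_kx^2}=1-\frac{1}{1+c_kx^2}$, each $b_k$ is a linear combination of $G$ and $\partial_{t_k}G$. So $[b_1:\dots:b_{n-1}]$ is an affine reparametrisation of $\nabla_t\log G$, or of a closely related potential.
\begin{itemize}
\item \emph{Injectivity.} Show that $\log G$ (or the correct potential) is strictly convex transversally to the scaling direction. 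This is where H\"older's inequality for $t\mapsto\log\int e^{-\frac12\sum\log(1+e^{t_k}x^2)}dx$ enters: log-integrals of log-convex families are convex.
\item \emph{Surjectivity.} Properness (as some $c_k/\max c_j\to0$, the $k$-th ratio degenerates to a boundary face) together with a degree argument gives surjectivity, exactly as in Part~(1).
\end{itemize}

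\textbf{Main obstacle.} The main difficulty is the injectivity step in both parts: choosing the right convex potential, and checking its strict convexity transverse to scaling. If the convexity fails to be strict in the naive variables, the fallback is to compute the Jacobian directly and show it is a nonsingular M-matrix-type matrix. The expected pattern is that raising $c_k$ increases the $k$-th component and decreases the others, using the sign structure of the integrands. Combined with properness, this gives a global diffeomorphism by the Hadamard--Caccioppoli theorem.
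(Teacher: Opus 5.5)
Your scaling reductions are correct and match how the paper uses Remark~\ref{rmk:scaling}. Bijectivity of $\Phi_b$ is equivalent to bijectivity of $[\bmh]\mapsto[b_1:\cdots:b_{n-1}]$, which is exactly the Yan--Zhou result \cite[Corollary 4.3, Remark 4.4]{Yan-Construction} the paper invokes; for (1) the paper just cites \cite[Example 3.9]{IJS}. Your derivation of $\sum_k\phi_k=\pi$ via $y=x\sqrt{P_{\bma}}$ is fine, and so is surjectivity via properness plus the face-preserving degree argument.

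\textbf{The gap is injectivity.} In (1) you defer it to Lawlor/Harvey. That is acceptable as a citation, since the paper does the same, but your convex-potential sketch is not a proof. In (2) the route you describe fails at three points.
\begin{enumerate}
\item From $\partial_{t_k}\log q_{\bmh}=1-\frac{1}{1+c_kx^2}$ one gets $b_k=\frac{c_k}{4}\bigl(G+2\partial_{t_k}G\bigr)$. So $[b_1:\cdots:b_{n-1}]=[e^{t_k}(1+2\partial_{t_k}\log G)]_k$. The weights $e^{t_k}$ mean this is not an affine image of $\nabla_t\log G$, so convexity of $\log G$ would not by itself give injectivity.
\item H\"older gives convexity of $\log\int f_t\,dx$ when $t\mapsto f_t(x)$ is log-convex. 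But $q_{\bmh}^{-1/2}=\exp\bigl(-\frac12\sum_k\log(1+e^{t_k}x^2)\bigr)$ is log-\emph{concave} in $t$, so the inequality runs the wrong way.
\item The genuine potential is $\Phi(t)=\frac12\int_{\R}x^{-2}\bigl(1-q_{\bmh}^{-1/2}\bigr)dx$, which satisfies $\partial_{t_k}\Phi=b_k$. Its Hessian quadratic form is $\frac14\int_{\R}x^{-2}q_{\bmh}^{-1/2}\bigl(\sum_k w_k(1-w_k)\xi_k^2-\frac12(\sum_k w_k\xi_k)^2\bigr)dx$, with $w_k=\frac{c_kx^2}{1+c_kx^2}$. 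For $\xi=e_1-e_2$ the integrand is negative wherever $w_1\approx1$ and $w_2\approx0$. So no pointwise Cauchy--Schwarz or H\"older argument can give convexity.
\end{enumerate}
Your M-matrix fallback is also only asserted. The sign pattern alone (positive diagonal, negative off-diagonal) does not make a matrix nonsingular.

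\textbf{The missing idea is an integrated Euler identity coming from scaling.}

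In (2), $\partial_{t_j}b_k=\frac14\int_{\R}x^{-2}q_{\bmh}^{-1/2}\bigl(\delta_{jk}w_k(1-w_k)-\frac12 w_jw_k\bigr)dx$. This is symmetric in $j,k$ with strictly negative off-diagonal entries. The homogeneity $b(t+s\mathbf{1})=e^{s/2}b(t)$, which is just $b_k(\lambda\bmh)=\lambda^{-1}b_k(\bmh)$, gives row sums $\sum_j\partial_{t_j}b_k=\frac12 b_k>0$. Hence $D_tb$ is symmetric and strictly diagonally dominant with positive diagonal, so positive definite. Then $(b(t')-b(t))\cdot(t'-t)>0$ for $t\neq t'$, which gives injectivity of $\bmh\mapsto(b_1,\dots,b_{n-1})$ directly.

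In (1), take $w_k=\frac{a_kx^2}{1+a_kx^2}$. For $j\neq k$ one finds $\partial_{t_j}\phi_k=-\frac12\int_{\R}p_{\bma}w_jw_k\,x^{-4}P_{\bma}^{-3/2}\,dx<0$. Scale invariance of $\phi$ makes the row sums vanish. So $D_t\phi$ is a weighted graph Laplacian with kernel $\R\mathbf{1}$, and the same monotonicity argument on $\{\sum_k t_k=0\}$ gives injectivity.

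Without this diagonal-dominance input, or without citing \cite{Yan-Construction} for (2) as the paper does, your proposal establishes only surjectivity.
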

    \begin{proof}
        The first item is well-known (\cf \cite[Example 3.9]{IJS}). The second item follows from the one-to-one correspondence between $\bmh=(h_1,\cdots,h_{n-1})$ and $(b_1,\cdots,b_{n}(\bmh))$ proved by Yan and Zhou (\cf \cite[Corollary 4.3, Remark 4.4]{Yan-Construction}) and the scaling property (2) in Remark \ref{rmk:scaling}.  
    \end{proof}
\begin{rmk}
   We take this opportunity to sketch how the $\Z_{2}$-harmonic function is extracted from the Lawlor neck family, corresponding to the converse direction of item (3) in Remark \ref{rmk:scaling}. Fix $\bmh$, and then
   \[
        \bma=(h_1^{-2},\cdots,h_{n-1}^{-2},\delta^{-2})
   \]
   singles out a family of Lawlor necks $L_{\phi,A,\delta}$ parametrised by $\delta$. After the following rotation, all fibre coordinates will be of order $\mathcal{O}(\delta)$ over a fixed large ball in $\Pi_0$.
   
   Consider the $\delta$-dependent $SU(n)$ rotations $
        U_{\delta}
        =
        \operatorname{diag}
        (e^{-i\phi_1/2},\cdots,e^{-i\phi_{n-1}/2},e^{i(\pi-\phi_n)/2})$, the projection $U_\delta L_{\phi,A,\delta}\to \Pi_0$ defines a double branched cover with branching set $E_{\bmh}$. Compressing the cotangent fibre coordinates (\ie taking $\delta \to 0^+$), the $\Z_2$-harmonic function $g_{\bmh}$ arises as the rescaled limit of the special Lagrangian potential for this family, with respect to \textit{another} Liouville form
   $\lambda=-\sum\limits_{k=1}^ny_kdx_k$. The translation formula is then given by \eqref{eqn:Lagpotential}.
    
   A caveat is that the asymptotic planes
   $\Pi^\delta_\pm=(e^{\pm i\psi_1},\cdots,e^{\pm i\psi_n})\R^n$
   of $L_\delta'=\operatorname{graph}(\delta dg_{\bmh})$ are \textit{only} approximately special Lagrangian planes, with angle error $\mathcal{O}(\delta^3)$, where
   \[
        \psi_k=\operatorname{arctan}(2\delta b_k(\bmh)),
        \quad
        \left|\sum_{k=1}^{n} \psi_k\right|=\mathcal{O}(\delta^3),
        \quad
        \textit{for}~\delta\ll 1.
   \]
\end{rmk}

\subsection{Invariant Setup and Analysis Foundations}
We recall the invariant setting and local structure theory of $\Z_2$-harmonic functions (resp. $1$-forms) developed by Donaldson \cite{Donaldson-Deformation}.

Let $\Sigma \subset \R^n$ be a co-oriented compact embedded codimension-$2$ submanifold, and let $\mathcal{L}$ be the unique flat real line bundle over $\R^n\setminus \Sigma$ with holonomy $-1$ around any small loop linking $\Sigma$, corresponding to a representation
$\chi:\pi_1(\R^n\setminus \Sigma)\to \{\pm 1\}.$
Harmonic sections of $\mathcal{L}$ are called multivalued ($\Z_2$-) harmonic functions on $\R^n$. We define adapted H\"older spaces with a fixed H\"older exponent $\alpha\in (0,\frac{1}{2})$ as follows. 

Consider the model case $\mathbb R^{n-2}_t\times \C_z$ with branching set $\R^{n-2}_t\times\{0\}$. For any $s\in \Gamma(\mathcal{L})$, we define
\[
    \|s\|_{C^{0,\alpha}}
    :=
    \sup_{|p-p'|\leqslant \frac{1}{2}\min\{|z|,|z'|\}}
    \frac{|s(p)-s(p')|}{|p-p'|^\alpha},
\]
for $p=(z,t)$ and $p'=(z',t')$, where the difference between $s(p)$ and $s(p')$ is measured by parallel transport along the segment from $p$ to $p'$. As a caveat, unlike the single-valued case, the $\Z_2$-monodromy implies that
$$\|s\|_{C^0}\leqslant C\|s\|_{C^{0,\alpha}},$$
by considering parallel transport around a polygon.

Let $\mathcal{T}_k$ be the set of differential operators given by monomials of degree $k$ in the $n$ commuting vector fields $\partial_{t_i},r\partial_r,\partial_\theta$.
We define the $\mathcal{D}^{k,\alpha}$-norm of a section $s$ by
\[
    \|s\|_{\mathcal{D}^{k,\alpha}}
    :=
    \max_{0\leqslant j\leqslant k,D\in \mathcal{T}_j}
    \|Ds\|_{C^{0,\alpha}}.
\]

For a compact embedded smooth $\Sigma$, the above adapted H\"older spaces are defined near $\Sigma$ in the standard way, denoted as $C^{0,\alpha}_{\operatorname{loc}}$ and $\mathcal{D}^{k,\alpha}_{\operatorname{loc}}$, for instance by covering $\Sigma$ with finitely many Fermi coordinate balls. We write $C^{k,\alpha}_{\operatorname{loc}}$ for the usual H\"older space. For convenience, we also define $s\in \mathcal{D}_{\operatorname{loc}}^{\infty,\alpha}$ if $s\in \mathcal{D}^{k,\alpha}_{\operatorname{loc}}$ for all $k\geqslant 0$. 

The following structure theorem is due to Donaldson.
\begin{thm}[\cite{Donaldson-Deformation}]\label{thm:structureofz2}
    Let $u\in \Gamma(\mathcal{L})$ be a harmonic section in $W^{1,2}_{\operatorname{loc}}(\R^n)$. Then $u\in \mathcal{D}^{\infty,\alpha}_{\operatorname{loc}}$. For any $p\in \Sigma$, let $(t,z)=(t_1,\cdots,t_{n-2},z)$ be a Fermi coordinate system around $p=(0,0)$, where $z$ is the complex coordinate transverse to $\Sigma$. Then
    \begin{equation}
        \label{eqn:asymptotestructuretheory}
        u(t,z)=\operatorname{Re}({\bf{A}}(t)z^{1/2})+\operatorname{Re}({\bf{B}}(t)z^{3/2})+E,
    \end{equation}
    where the coefficients ${\bf{A}}(t)\in \Gamma(N_{\Sigma}^{-1/2})$ and ${\bf{B}}(t)\in \Gamma(N_\Sigma^{-3/2})$ are smooth sections of certain powers of the normal bundle $N_\Sigma$, and the error $E\in C^{2,\alpha}_{\operatorname{loc}}\cap \mathcal{D}^{\infty,\alpha}_{\operatorname{loc}}$ satisfies $|\nabla^k E|\leqslant C(u,\alpha)|z|^{2+\alpha-k}$ for $k=0,1,2$, together with the corresponding H\"older norm bound. 
\end{thm}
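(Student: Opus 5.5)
The plan is to reduce to the flat model $\R^{n-2}_t\times\C_z$ with branching set $\{z=0\}$, where separation of variables gives the expansion exactly, and then to treat the curved Fermi-coordinate metric near $\Sigma$ as a perturbation handled by weighted (edge) Schauder theory. Away from $\Sigma$ the statement is ordinary interior elliptic regularity for a harmonic section of a flat bundle. So everything is local near a point $p\in\Sigma$.

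\textbf{Model case.} In polar coordinates $z=re^{i\theta}$, a section of $\mathcal{L}$ is a function that is anti-periodic in $\theta$. It therefore expands in the modes $e^{\pm i(k+\frac12)\theta}$, $k\geqslant 0$. Taking also a Fourier transform (or a local expansion) in $t$, each mode satisfies a Bessel-type ODE in $r$ with indicial roots $\pm(k+\tfrac12)$. The $W^{1,2}_{\operatorname{loc}}$ hypothesis excludes the negative roots: already $r^{-1/2}$ has $|\nabla u|^2\sim r^{-3}$, which is not integrable against $r\,dr$. Hence only the powers $r^{1/2},r^{3/2},r^{5/2},\dots$ survive, together with their $t$-dependent corrections, which carry extra factors of $r^2$.

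Regularity comes from commuting vector fields. The fields $\partial_{t_i}$ and $\partial_\theta$ commute with the flat Laplacian, and $r\partial_r$ commutes with $r^2\Delta$ up to lower order terms. So $Du$ is again harmonic for every $D\in\mathcal{T}_j$, possibly after combining it with lower-order derivatives. Interior Schauder estimates on the dyadic annuli $\{r\sim 2^{-m}\}$, rescaled to unit size, then give the $\mathcal{D}^{k,\alpha}$ bounds uniformly in $m$.

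Grouping the modes gives the expansion: the $k=0$ modes give $\operatorname{Re}({\bf A}(t)z^{1/2})$, the $k=1$ modes give $\operatorname{Re}({\bf B}(t)z^{3/2})$, and the remainder is $\mathcal{O}(r^{5/2})$. Since $\alpha<\tfrac12$, this remainder is $\mathcal{O}(r^{2+\alpha})$, and likewise for up to two derivatives. The coefficients ${\bf A},{\bf B}$ transform under rotations of the normal plane with weights $-\tfrac12$ and $-\tfrac32$, which is exactly the statement that they are sections of $N_\Sigma^{-1/2}$ and $N_\Sigma^{-3/2}$.

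\textbf{Curved case.} In Fermi coordinates the Laplacian is $\Delta_0+P$, where $P$ is a second-order operator whose coefficients are $\mathcal{O}(r)$ when measured in the edge (scale-invariant) sense. I would set up weighted edge H\"older spaces $r^{\beta}\mathcal{D}^{k,\alpha}$ and invert $r^2\Delta_0$ on them for every weight $\beta$ that avoids the indicial set $\{\pm(k+\tfrac12)\}$; this is the standard Mazzeo edge calculus, or an explicit mode-by-mode inversion as in the model case. Then I would bootstrap:
\begin{enumerate}
\item Starting from $W^{1,2}$, the edge Schauder estimates (the curved analogue of the model-case argument) give $u\in r^{1/2-\epsilon}\mathcal{D}^{\infty,\alpha}$.
\item Write $\Delta_0u=-Pu$, whose right-hand side gains a factor of $r$.
\item Solve with the model inverse and peel off the $r^{1/2}$ term, which defines ${\bf A}$.
\item Iterate once more to peel off the $r^{3/2}$ term, which defines ${\bf B}$. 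Terms such as $P(\operatorname{Re}({\bf A}z^{1/2}))\sim r^{1/2}$ produce corrections of order $r^{5/2}$ (possibly with logarithmic factors), and these are absorbed into $E$.
\end{enumerate}
Finally, ordinary $C^{2,\alpha}$ control of $E$ comes from scaling on annuli together with the bound $|E|\lesssim r^{2+\alpha}$. Smoothness of ${\bf A}$ and ${\bf B}$ in $t$ follows because $\partial_t$ preserves the whole structure.

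\textbf{Main obstacle.} I expect the hardest step to be the weighted Fredholm and Schauder theory for the edge operator in the curved setting. It requires uniform estimates on annuli and control of the commutators of $\mathcal{T}_k$ with $P$, and care is needed to show that the perturbation never creates resonances at the half-integer indicial roots beyond those absorbed into ${\bf B}$ or into $E$. I would first establish everything in the exact model case and then run the perturbation argument with a fixed $\beta=2+\alpha$ lying strictly between $\tfrac32$ and $\tfrac52$.
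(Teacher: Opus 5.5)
The paper gives no proof of this statement; it quotes it from Donaldson, so your argument has to stand on its own. Your flat-model analysis is sound, but step 4 of the curved case fails.

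You claim $P(\operatorname{Re}({\bf A}z^{1/2}))\sim r^{1/2}$. This contradicts your own description of $P$: if $r^2P$ is $\mathcal{O}(r)$ times edge operators, then $P$ sends $r^{1/2}$-terms to $r^{-1/2}$-terms. Concretely, the Fermi volume density is $\sqrt{g_\Sigma}\,\det(I-\langle \mathrm{II},z\rangle)$. Hence $\Delta_g=\Delta_0-\vec H\cdot\nabla_z+\cdots$, with an $\mathcal{O}(1)$ first-order term from the mean curvature vector $\vec H$ of $\Sigma$. Write $h$ for $\vec H$ as a complex number in the normal plane. Then $P(\operatorname{Re}({\bf A}z^{1/2}))=-\operatorname{Re}(\tfrac12 h{\bf A}z^{-1/2})+\mathcal{O}(r^{1/2})$, and inverting $\Delta_0$ forces the term $\operatorname{Re}(\tfrac14 h{\bf A}\,\bar z z^{1/2})=r^{3/2}\operatorname{Re}(\tfrac14 h{\bf A}e^{-i\theta/2})$. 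This term sits in the angular modes $e^{\pm i\theta/2}$. It cannot be absorbed into $\operatorname{Re}({\bf B}z^{3/2})$, which lives in the modes $e^{\pm 3i\theta/2}$, nor into $E=\mathcal{O}(r^{2+\alpha})$. So your bootstrap does not close at order $r^{3/2}$.

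This is not a technical defect that a better method could fix. For $\Sigma$ a circle of radius $R$ in $\R^3$, the Fermi-coordinate Laplacian is
\[
\Delta=\partial_x^2+\partial_y^2+\frac{1}{R+x}\,\partial_x+(1+x/R)^{-2}\,\partial_t^2 .
\]
Substituting $u=\operatorname{Re}({\bf A}z^{1/2})+\operatorname{Re}({\bf B}z^{3/2})+E$, with the stated bounds on $E$, gives $0=\Delta u=\tfrac{1}{2R}\operatorname{Re}({\bf A}z^{-1/2})+\mathcal{O}(r^{\alpha})$. This forces ${\bf A}\equiv 0$. So the expansion in exactly the stated form cannot hold when ${\bf A}\not\equiv 0$ at points where $\vec H\neq 0$.

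What your scheme does prove is the case ${\bf A}\equiv 0$. That is the only case the paper ever applies the theorem to: it is a hypothesis both of the main theorem and of the $n=2$ proposition. In that case $P(\operatorname{Re}({\bf B}z^{3/2}))=\mathcal{O}(r^{1/2})$, which lies in the $\Z_2$-H\"older space $C^{0,\alpha}$ because $\alpha<\tfrac12$. The corrections it generates are $\mathcal{O}(r^{5/2})$ and go into $E$. For general ${\bf A}$ you must keep the explicit term $\operatorname{Re}(\tfrac14 h{\bf A}\,\bar z z^{1/2})$ in the expansion. Equivalently, replace the Fermi normal coordinate $z$ by $z+\tfrac12 h|z|^2$.

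A smaller inaccuracy: $[r\partial_r,r^2\Delta_0]=2r^2\Delta_t$ is not lower order in the edge sense. It is harmless only because $\partial_t$ commutes exactly with $\Delta_0$, so $\Delta_0(r\partial_r u)=-2\Delta_t u$ has a right-hand side you already control.
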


\begin{Def}
    We call $u$ (respectively, $du$) a non-degenerate $\Z_2$-harmonic function (respectively, $1$-form) if the leading coefficient ${\bf{A}}(t)\in \Gamma(N_{\Sigma}^{-1/2})$ vanishes identically, and ${\bf{B}}(t)\in \Gamma(N_\Sigma^{-3/2})$ is nowhere vanishing. 
\end{Def}

\subsection{Multivalued graphs}
\label{sec:multivaluedgraph}
Let $u$ be a non-degenerate $\Z_2$-harmonic function on $\R^n$. The representation
$\chi:\pi_1(\R^n\setminus \Sigma)\to \{\pm1\}$
determines an associated branched double cover $p:\widetilde{\R^n}\to \R^n$ with branching locus $\Sigma$. The space $\widetilde{\R^n}$ is connected by the monodromy condition. Near $\Sigma\subset \widetilde{\R^n}$, the covering map $p$ is modelled on
\[
    \R^{n-2}\times \C \to \R^{n-2}\times \C,
    \qquad
    (t,w)\mapsto (t,z):=(t,w^2),
\]
identifying $\Sigma$ locally with $\{w=0\}$. Then $\tilde u:=p^*u$ (and hence $d\tilde u$) defines a \textit{global single-valued} function (exact $1$-form) on $\widetilde{\R^n}$.  

Consider the inclusion map

\begin{equation}
    \label{eqn:iotadelta}
    \iota_{\delta }:\widetilde{\R^n}\to T^*\R^n,
    \quad
    \tilde x\mapsto (p(\tilde x),\delta y(\tilde x)),
\end{equation}
where $y(\tilde x)\in T^*_{p(\tilde x)}\R^n$ is the unique covector such that $p^*y(\tilde x)=d\tilde u(x)$. Since $p$ is a local diffeomorphism away from the branching locus $\Sigma$, and $du=0$ on $\Sigma$ (so $y\equiv 0$ on $\Sigma$), $\iota_\delta$ is well-defined. 

The image $L_{\delta}:=\operatorname{im}(\iota_\delta)$ is just the graph of the two-valued $1$-form $\delta du$ over the zero section $\R^n\subset T^*\R^n$. 

\begin{figure}[H]
    \centering
    \begin{tikzcd}
        \widetilde{\R^n} \arrow[d, "p"'] \arrow[r, "\iota_{\delta }"] & L_{\delta}\subset T^*\R^n \arrow[d, "\pi"] \\
        \R^n \arrow[r]                                             & \R^n                             
    \end{tikzcd}
    \caption{The multivalued graph $L_{\delta}=\operatorname{graph}(\delta du)$.}
    \label{fig:idaslg}
\end{figure}

Thus, the multivaluedness is only apparent from the viewpoint of the base. After passing to the double branch cover $\widetilde{\R}^n$, all the relevant objects become globally defined and single-valued. For notational convenience, we will often suppress the distinction between the multivalued object downstairs and its single-valued lift upstairs.

\begin{lem}\label{lem:reg+potential}Let $u$ be a non-degenerate harmonic function on $\R^n$ with $|du|^{-1}(0)=\Sigma$. Then for $\delta\ll 1$, the two-valued graph $L_\delta=\operatorname{graph}(\delta du)$ defines a connected closed exact embedded Lagrangian inside $\C^n$, smooth away from the branching locus $\Sigma$, and $C^{1,\alpha}$ around $\Sigma$. Moreover, with respect to the choice of Liouville form in \eqref{eqn:Cn-CYstr}, the Lagrangian potential of $L_{\delta}$ is given by
    \begin{equation}
        \label{eqn:Lagpotential}
        f_{L_{\delta}}=-\delta u+\frac{\delta}{2}\langle x,\nabla u\rangle.
    \end{equation}
\end{lem}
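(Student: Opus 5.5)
We treat the statement in three parts: the Lagrangian condition together with the potential formula, regularity near $\Sigma$, and global embeddedness and closedness for $\delta\ll 1$.

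\textbf{Lagrangian condition and potential.} Away from $\Sigma$, $L_\delta$ is locally the union of two graphs of closed $1$-forms $\pm\delta du$, one for each local branch of $u$. Such graphs are Lagrangian, and they are exact because $\tilde u$ is a global single-valued function on $\widetilde{\R^n}$. To get the potential, pull back $\lambda=\sum_k(\tfrac12 x_kdy_k-\tfrac12 y_kdx_k)$ along $\iota_\delta$, writing $y=\delta\nabla u$. Then
\[
\iota_\delta^*\lambda=\tfrac{\delta}{2}\sum_k \big(x_k\, d(\partial_k u)-\partial_k u\, dx_k\big)=\tfrac{\delta}{2}\big(d\langle x,\nabla u\rangle - 2\,du\big),
\]
using $\sum_k \partial_k u\, dx_k=du$ and $\sum_k x_k\,d\partial_k u=d\langle x,\nabla u\rangle-du$. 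This gives $f_{L_\delta}=-\delta u+\tfrac{\delta}{2}\langle x,\nabla u\rangle$ on the cover, which extends continuously across $\Sigma$ with value $0$ there. Connectedness of $L_\delta$ follows from connectedness of $\widetilde{\R^n}$, and closedness (as a subset) follows because $\iota_\delta$ is proper, being a graph over a proper map.

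\textbf{Regularity near $\Sigma$.} We use Theorem \ref{thm:structureofz2} with ${\bf A}\equiv 0$. In the cover coordinates $z=w^2$ we have $\tilde u=\operatorname{Re}({\bf B}(t)w^3)+E$, so $\nabla u$, viewed as a function of $(t,w)$, is $\tfrac32\overline{{\bf B}}\,\bar w+\dots$, i.e.\ of order $|w|$. The projection $p$ is not an immersion at $w=0$, so we parametrise $L_\delta$ instead by $(t,\xi)$ with $\xi=\delta\,\partial_{\bar z}u$ in a neighbourhood of $\Sigma$. Non-degeneracy (${\bf B}\neq0$) makes the map $w\mapsto \partial_z u\approx \tfrac32{\bf B}\,w$ a local diffeomorphism in the fibre directions. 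We then write $L_\delta$ near $\Sigma$ as a graph over the $(t,\xi)$-plane, i.e.\ over $\R^{n-2}_t\times\C_\xi$, with $z$ and the $t$-covector expressed as functions of $(t,\xi)$. The leading term is $z=w^2\sim \xi^2/\delta^2$, which is smooth, and the correction $E$ contributes terms controlled by $|z|^{2+\alpha-k}$. This yields $C^{1,\alpha}$ regularity of the graph function, and hence a $C^{1,\alpha}$ embedded submanifold near $\Sigma$.

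\textbf{Embeddedness away from $\Sigma$ (main obstacle).} Two points of $L_\delta$ over the same $x\notin\Sigma$ are $(x,\pm\delta du(x))$, and these are distinct since $du\neq0$ off $\Sigma$ by hypothesis. So $\iota_\delta$ is injective, and it is an immersion away from $\Sigma$. Near $\Sigma$, injectivity comes from the local graph description above. The difficulty is uniformity. We need $\delta$ small enough that the $(t,\xi)$-chart covers a fixed tubular neighbourhood, which uses compactness of $\Sigma$ and the lower bound $|{\bf B}|\ge c>0$. At infinity we need the immersion and injectivity to hold uniformly; this follows from the asymptotics (\ref{eqn:asym-of-u}), since $du\sim\pm dV$ with $V$ non-degenerate, so $|du|\gtrsim|x|$ for $|x|$ large and the $C^1$ control is uniform. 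Injectivity and properness then upgrade the immersion to an embedding.
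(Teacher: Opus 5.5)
Your proposal is correct and follows essentially the same route as the paper. The potential comes from the same direct computation of $\iota_\delta^*\lambda$, extended continuously across $\Sigma$. The regularity near $\Sigma$ comes from the structure theorem on the double cover together with the inverse function theorem applied to the fibre coordinate $w\mapsto\delta(\tfrac32{\bf B}(t)w+2\partial_zE(t,w^2))$; the paper phrases this as $\iota_\delta$ being a $C^{1,\alpha}$ immersion rather than as a graph over the $(t,\xi)$-plane, and likewise gets embeddedness from $du\neq0$ off $\Sigma$.

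The uniformity concerns in your last paragraph are unnecessary. Since $\delta$ only rescales the fibre, injectivity, properness, and the immersion property near $\Sigma$ (on a $\delta$-independent neighbourhood) hold for every $\delta>0$. You also do not need the asymptotics \eqref{eqn:asym-of-u}, which are not among the lemma's hypotheses anyway.
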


\begin{proof}
    We first examine the regularity of $L_\delta$ around any point $q\in \Sigma$. Let $(t,z)$ be a local Fermi coordinate centred at $q=(0,0)$. By the structure Theorem \ref{thm:structureofz2}, we have 
    $$du=d\Re({\bf{B}}(t)z^{3/2})+dE,$$
where $E\in C^{2,\alpha}_{\operatorname{loc}}\cap \mathcal{D}^{\infty,\alpha}_{\operatorname{loc}}$ with corresponding error estimates. Passing to the double branch cover $\widetilde{\R^n}$, we know locally $p:(t,w)\mapsto (t,z=w^2)$  around $q\in \Sigma$, with
\begin{equation}
    \begin{aligned}
    \label{eqn:pullbackdu}
        &  d\tilde u=p^*du=\Re\left((3{\bf B}(t)w^2+4w\partial_zE(t,w^2))dw\right)+\sum_{\ell=1}^{n-2}\left(\Re(\partial_{t_{\ell}}{\bf{B}}(t)w^3)+\partial_{t_\ell}E(t,w^2)\right)d{t_\ell}.
    \end{aligned}
\end{equation}
In the definition \eqref{eqn:iotadelta} of the inclusion map $\iota_{\delta}:\widetilde{\R^n}\to T^*\R^n$, $y(\tilde x)=y(t,w)\in T_{(t,z)}^*\R^n$ is characterised by the relation $p^* y(t,w)=d\tilde u|_{(t,w)}$. Write $y(t,w)=\Re(\textbf{Y}(t,w)dz)+\sum\limits_{1\leqslant \ell\leqslant n-2}y_\ell(t,w) dt_\ell$, for real-valued  $y_\ell,1\leqslant\ell\leqslant n-2$ and complex-valued $\textbf{Y}$, then

$$p^*y(t,w)=\Re(2w\textbf{Y}dw)+\sum_{1\leqslant\ell\leqslant n-2}y_\ell dt_\ell.$$
Comparing with \eqref{eqn:pullbackdu}, we find
\begin{equation}
    \label{eqn:graphfibrecoordinate}
    y(\tilde x)=y(t,w)=\Re\left(\left(\frac{3}{2}{\bf B}(t)w+2\partial_zE(t,w^2)\right)dz\right)+\sum_{\ell=1}^{n-2}\left(\Re(\partial_{t_{\ell}}{\bf{B}}(t)w^3)+\partial_{t_\ell}E(t,w^2)\right)d{t_\ell}\in T_{(t,z)}^*\R^n.
\end{equation}
As $\nabla E\in C^{1,\alpha}_{\operatorname{loc}}$ with
estimates $|\nabla E|\lesssim |z|^{1+\alpha}$, it immediately follows that $\tilde y(x)\in C^{1,\alpha}$. The non-degeneracy of $u$ (\ie ${\bf{B}}\in \Gamma(N_\Sigma^{-3/2})$ is nowhere vanishing) guarantees that the dominant term in \eqref{eqn:graphfibrecoordinate} is $\Re(\frac{3}{2}\textbf{B}wdz)$, the implicit function theorem implies that $\iota_\delta$ is a local immersion around any $q\in \Sigma$. The embeddedness follows from $|du|^{-1}(0)=\Sigma$. 

Now we prove the Lagrangian potential formula \eqref{eqn:Lagpotential} for $L_\delta$. Writing $u_k=\frac{\partial u}{\partial x_k}$, we compute
    \begin{equation*}
        \begin{aligned}
            \lambda|_{L_\delta}
            &=
            \sum_{k=1}^n
            \left.
            \left(
            \frac{1}{2}x_kdy_k-\frac{1}{2}y_kdx_k
            \right)
            \right|_{L_\delta} \\
            &=
            \delta\sum_{k=1}^n
            \left(
            \frac{1}{2}x_k du_k-\frac{1}{2}u_kdx_k
            \right)\\
            &=
            \delta d\left(-u+\frac{1}{2}\langle x,\nabla u\rangle\right).
        \end{aligned}
    \end{equation*}
    The above pointwise equality holds away from $\Sigma$. Since \eqref{eqn:Lagpotential} is continuous and vanishes on $\Sigma$ by Theorem \ref{thm:structureofz2}, the lemma follows from the $C^{1,\alpha}$-regularity of $L_\delta$. 
\end{proof}
\section{Uniqueness when $n=2$}
\label{sec:uniquen=2}
\begin{prop}
     Let $u$ be a $\Z_2$-harmonic function on $\C$ with non-empty branching locus
    $\Sigma=\{p_1,\ldots,p_m\}\subset \C$. Assume that ${\bf A}\equiv0$ at each point of $\Sigma$, and that $u$ has polynomial growth at infinity. Set
    $F(z)=\prod\limits_{k=1}^m(z-p_k)$. 
    Then the following holds. 
    \begin{enumerate}
        \item [\textnormal{(1)}] There exists a polynomial $G(z)\in \C[z]$ such that
    \begin{equation}
        \label{eqn:1-form-expression-forn=2}
        du=\Re\left(G(z)\sqrt{F(z)}\,dz\right).
    \end{equation}
        \item [\textnormal{(2)}] The $\Z_2$-harmonic $1$-form \eqref{eqn:1-form-expression-forn=2} is exact if and only if all real period integrals vanish, \ie
        \begin{equation}
            \label{eqn:exactconditionforn=2}
            \Re\int_{\gamma}G(z)\sqrt{F(z)}dz=0,
        \end{equation}
        for any closed curve $\gamma$ on the branched double cover $\widetilde X=\{(z,y):y^2=F(z)\}$.  
        \item [\textnormal{(3)}] In the case of quadratic growth, we have $m=2$, and $G(z)\equiv C\in \C^*$. The integrability condition \eqref{eqn:exactconditionforn=2} reduces to
        $$C(p_1-p_2)^2\in \R.$$
        Up to translation and rotation on $\C$, we can assume $p_1=-p_2=a>0$. Then
        \begin{equation}
            \label{eqn:answerforn=2}
            u=\frac{C}{2}\Re\left(z\sqrt{z^2-a^2}-a^2\log(z+\sqrt{z^2-a^2})\right),\quad C\in \R^*.
        \end{equation}
    \end{enumerate}
\end{prop}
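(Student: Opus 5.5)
The natural object is the complex derivative. Write $\phi := 2\partial_z u = u_x - i u_y$. Away from $\Sigma$, $u$ is a local harmonic function, so $\phi$ is locally holomorphic, and $du = \Re(\phi\,dz)$. Going around a small loop linking $p_k$, $u$ changes sign, so $\phi$ does too. Hence $\phi$ is a holomorphic section of the flat complex line bundle with $\Z_2$-monodromy around every $p_k$.

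This monodromy is exactly that of $\sqrt{F(z)}$, since each factor $(z-p_k)^{1/2}$ contributes $-1$. Therefore $H := \phi/\sqrt{F}$ is a single-valued holomorphic function on $\C\setminus\Sigma$.

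\emph{Part (1): the singularities and growth of $H$.}
\begin{itemize}
\item At each $p_k$, Theorem~\ref{thm:structureofz2} (with $n=2$ there are no $t$ variables) together with ${\bf A}\equiv 0$ gives $u = \Re({\bf B}(z-p_k)^{3/2}) + O(|z-p_k|^{2+\alpha})$. Hence $|\phi| \lesssim |z-p_k|^{1/2}$ near $p_k$.
\item It follows that $H$ is bounded near $p_k$, so $p_k$ is a removable singularity.
\item For global growth, polynomial growth of $u$ gives polynomial growth of $\phi$ by interior gradient estimates on balls $B(z,|z|/2)$ for large $|z|$, applied on the double cover. So $|H| \lesssim |z|^N$, and Liouville's theorem makes $H = G$ a polynomial.
\end{itemize}
This proves (1). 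If one wants to avoid relying on the structure theorem at the branch points, the same conclusion follows from a local Laurent expansion in $w = (z-p_k)^{1/2}$, since $u$ is harmonic in $w$ on the cover.

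\emph{Part (2).} A closed $1$-form on the Riemann surface $\widetilde X$, minus the preimages of the branch points (which are removable), is exact if and only if all its periods vanish. The form $\Re(G\sqrt F\,dz)$ is closed because $G\sqrt F\,dz$ is holomorphic. A primitive on $\widetilde X$ that is odd under the deck involution descends to a section $u$ of $\mathcal L$, and $u$ is harmonic as the real part of a local holomorphic primitive. The parity is automatic: the odd primitive is obtained by anti-symmetrising, which is possible since the form itself is odd.

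\emph{Part (3).} Quadratic growth of $u$ means linear growth of $\phi$. Since $|\phi| \sim |G|\,|z|^{m/2}$ at infinity, we need $\deg G + m/2 = 1$. As $m \geq 1$, this forces either $m=2$ with $\deg G = 0$, or $m=1$ with $\deg G = 1/2$, which is impossible.

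I do not expect $m$ to be odd in general, because a nontrivial $\Z_2$-bundle needs trivial monodromy at infinity on $\C$. With $m$ even, only $m=2$ and $G = C \neq 0$ remain.

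Normalise by translation and rotation so that $p_{1,2} = \pm a$. Note that rotating $z \mapsto e^{i\theta}z$ changes $C$ by a phase. The curve $\widetilde X$ for $y^2 = z^2 - a^2$ is a sphere minus two points over $\infty$. Its only nontrivial cycle is a loop around the slit $[-a,a]$, equivalently a residue at infinity. Expanding
\[
\sqrt{z^2-a^2} = z - \frac{a^2}{2z} + \dots,
\]
the period is $-\pi i a^2 C$. Requiring its real part to vanish means $Ca^2 \in \R$, i.e.\ $C(p_1-p_2)^2 \in \R$, so $C \in \R^*$ after the normalisation.

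Integrating $C\sqrt{z^2-a^2}$ gives
\[
\tfrac{C}{2}\Bigl(z\sqrt{z^2-a^2} - a^2\log\bigl(z+\sqrt{z^2-a^2}\bigr)\Bigr).
\]
Its real part is well defined because the logarithm's period $2\pi i$ is killed by taking the real part when $C$ is real. The additive constant is fixed by requiring $u=0$ at $\pm a$. At $z=a$ the expression equals $-\tfrac{C}{2}a^2\log a$ rather than $0$, so the normalisation has to be adjusted by that constant.

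\emph{Main obstacle.} The point needing most care is the claim that the monodromy at infinity is trivial, i.e.\ that $m$ is even. This is where I would argue via the existence of the double cover branched exactly over $\Sigma$: the loop around all of $\Sigma$ is homotopic to a large circle, and it must have monodromy $(-1)^m$. I would check whether the hypotheses force that to equal $1$, or whether odd $m$ is instead excluded directly by the growth count above.
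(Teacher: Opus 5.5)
This is correct and follows the paper's proof step for step: single-valuedness of $2\partial_z u/\sqrt{F}$, removability at each $p_k$ from ${\bf A}\equiv 0$, Liouville, the count $\deg G+m/2=1$, and a single period on $\widetilde X\cong\C^*$, where your residue $-\pi i a^2C$ matches the paper's $\pm\tfrac{i\pi C}{4}(p_1-p_2)^2$. Two side remarks: your ``main obstacle'' is spurious and its premise is false---odd $m$ does occur under polynomial growth (e.g.\ $u=\Re(z^{3/2})$, with monodromy $-1$ at infinity)---but integrality of $\deg G$ already excludes it in the quadratic case, as you noted; and your additive-constant correction is right, since \eqref{eqn:answerforn=2} as displayed vanishes at $\pm a$ (equivalently, is odd under $\sqrt{z^2-a^2}\mapsto-\sqrt{z^2-a^2}$) only when $a=1$, so the correct odd primitive is $\tfrac{C}{2}\Re\bigl(z\sqrt{z^2-a^2}-a^2\log\bigl((z+\sqrt{z^2-a^2})/a\bigr)\bigr)$.
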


\begin{proof}
    Away from $\Sigma$, we choose a local single branch of $u$. Then the complex $1$-form $\omega:=2\partial u $
    is holomorphic and satisfies $du=\operatorname{Re}\omega$. By definition, both $u$ and $\omega$ have the same $\Z_2$-monodromy as $\sqrt{F(z)}$ around any point in $\Sigma$. Therefore, $\frac{\omega}{\sqrt{F(z)}}$
    is a single-valued holomorphic $1$-form on $\C\setminus\Sigma$.
    
    We claim that it extends holomorphically across each $p_k$, $1\leqslant k\leqslant m$. Indeed, near $p_k$, write $\zeta=z-p_k$. Since ${\bf A}(p_k)=0$, the structure theorem \ref{thm:structureofz2} implies that
    $
        \omega=2\partial u=2\partial\operatorname{Re}({\bf{B}}(p_k)\zeta^{3/2})+O(|\zeta|^{1+\alpha}).
    $
     On the other hand, $\sqrt{F(z)}=\sqrt{\zeta}\sqrt{\prod\limits_{j\neq k}(z-p_j)}$. 
     The claim follows from Riemann's removable singularity theorem. Hence there is an entire function $G(z)$ such that
    $\omega=G(z)\sqrt{F(z)}dz.$
    
    On any contractible set not containing $\Sigma$, the standard Schauder estimates imply that $\nabla^ku$ also has polynomial growth at infinity, and hence $G(z)$ has polynomial growth. It follows from Liouville's theorem that $G\in \C[z]$. This completes the proof of item (1). 
    
    Moreover, if $G\in\C[z]$, then $\operatorname{Re}\omega=\operatorname{Re}\left(G(z)\sqrt{F(z)}\,dz\right)$ is a  $\Z_2$-harmonic $1$-form on $\C\setminus\Sigma$, with monodromy $-1$ around each point in $\Sigma$. Passing to the branched double cover $\widetilde{X}$, standard theory implies that it integrates to a globally defined $\Z_2$-harmonic function exactly when the real parts of all periods vanish on the branched double cover. Thus item (2) follows. 

    Finally, in the case of quadratic growth, tracking the growth of \eqref{eqn:1-form-expression-forn=2}, we find $\deg G+\frac{m}{2}=1$. Hence $m=2$ and $G\equiv C\in \C^*$. In this case, $\widetilde{X}\cong \C^*$, and the period integral is determined by the generator $\gamma\in H_1(\C^*)$ whose projection to the base $\C$ links $p_1$ and $p_2$,
    $$\Re\int_{\gamma}G(z)\sqrt{F(z)}dz=\Re \left(C\int_\gamma ydz\right)=\Re\left(\pm \frac{iC\pi}{4}(p_1-p_2)^2 \right).$$
    Item (3), together with \eqref{eqn:answerforn=2}, then follows from direct calculations.
\end{proof}
\section{Analytical Preparations}
\label{Sec:analyticalpreparations}
Let $n\geqslant 3$.
We start with several preliminary reductions:
\begin{itemize}
    \item Since the set of indicial roots of $\Delta_{\R^n}$ is $\{m,2-n-m:m\in \Z_{\geqslant0}\}$, by elliptic theory on asymptotically conical spaces \cite{Lockhart-McOwen}, we can find $\ell \in \R^n$ and $M_0\in \R$ such that 
    $$u\sim \pm\left({x^TMx}+x^T\ell-M_0+\mathcal{O}(|x|^{\mu})\right),\quad \forall 2-n<\mu<0,$$
    where $V(x)=x^TMx$ and $M$ is a non-degenerate symmetric matrix.  
    \item We can then use a translation of $\R^n$ and the non-degeneracy of $V$ to eliminate the linear term, and then rotate the coordinates so that the quadratic form $V(x)$ is diagonal. Thus, we can assume $M=\operatorname{diag}(M_1,\cdots,M_n)$ with $M_i\neq 0$ and $\ell=0$. 
    \item It follows from a standard blow-down argument that $M_1+\cdots M_n=0$, \ie $V(x)=\sum\limits_{k=1}^n M_kx_k^2$ is a harmonic polynomial. Applying standard weighted Schauder estimates, we obtain
   \begin{equation}
       \label{eqn:higher-order derivative estimates}
       \left|\nabla^k\left(u-\sum_{k=1}^n M_kx_k^2+M_0\right)\right|\leqslant C |x|^{\mu-k},\quad |x|\geqslant R_0\gg1,~k\in \Z_{\geqslant 0},~\forall 2-n<\mu<0.
   \end{equation}
 
\end{itemize}

The rest of this section is devoted to proving the non-degeneracy condition (\ie ${\bf B}\in \Gamma(N_\Sigma^{-3/2})$ is nowhere vanishing)  as well as $|du|^{-1}(0)=\Sigma$, by adapting Almgren's frequency function, with the caveat that we do not \textit{a priori} assume $u$ is a Dirichlet energy minimiser. 

\begin{prop}\label{prop:freqofdu}
    Let $\beta=du\in \mathcal{D}^{\infty,\alpha}_{\operatorname{loc}}(\mathcal{L}\otimes T^*\R^n)$ be a non-zero $\Z_2$-harmonic $1$-form. Then for each point $p\in \R^n$, define Almgren's frequency function of $\beta$ by
    $$I(p,r)=\frac{r D(p,r)}{H(p,r)},\quad \textit{where}~D(p,r):=\int_{B_r(p)}|\nabla \beta|^2,\quad H(p,r):=\int_{\partial B_r(p)}|\beta|^2,\quad \forall r>0.$$
    Then the following hold,
    \begin{enumerate}
        \item [\textnormal{(1)}] We have $H\in C^1(\R_{>0})$, and $D$ is absolutely continuous, with
        \begin{equation}
        \label{eqn:diffofDH}
            H'(p,r)=\frac{n-1}{r}H(p,r)+2D(p,r),\quad \forall r>0;\quad D'(p,r)=\int_{\partial B_r(p)}|\nabla \beta|^2,\quad \textit{for a.e. } r>0.
        \end{equation}
        In particular, $h(p,r)=r^{1-n}H(p,r)$ is non-decreasing.    

        \item [\textnormal{(2)}] The frequency function $I(p,r)$ is well-defined, non-decreasing, and locally absolutely continuous on $\R_{>0}$, with  
        \begin{equation}
        \label{eqn:dfreq}
            I'(p,r)=\frac{2r}{H(p,r)^2} 
        \left(\int_{\partial B_r(p)}|\nabla_\nu  \beta|^2\int_{\partial B_r(p)}|\beta|^2-\left(\int_{\partial B_r(p)}\langle\beta,\nabla_\nu \beta\rangle\right)^2\right)\geqslant0,
        \end{equation}
        where $\nu(x)=\frac{x-p}{|x-p|}, ~x\in \partial B_r(p)$.
        Consequently,
        \begin{equation}
            \label{eqn:HtoI}
            \frac{d}{dr}\log h(p,r)=\frac{2I(p,r)}{r}.
        \end{equation}
        One then defines $\operatorname{ord}_\beta(p):=\lim\limits_{r\to 0} I(p,r)$, which measures the vanishing order of $\beta$ at $p$. 
        \item [\textnormal{(3)}] If $I(p,r)\equiv I_0>0$ for $0<r<r_0$, then $\beta$ is $I_0$-homogeneous with respect to the centre $p$.
    \end{enumerate}
\end{prop}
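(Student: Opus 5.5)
The proof follows Almgren's classical computation for harmonic functions. The only new ingredients are the $\Z_2$-monodromy, which is harmless because all integrands $|\beta|^2$, $|\nabla\beta|^2$, $\langle \beta,\nabla_\nu\beta\rangle$ are single-valued, and the singular set $\Sigma$, where we must justify integrations by parts. The key pointwise facts are that each local branch of $\beta=du$ is a closed and coclosed harmonic $1$-form, so componentwise in Euclidean coordinates $\Delta\beta=0$. Hence
\[
\tfrac12\Delta|\beta|^2=|\nabla\beta|^2 \quad\text{away from }\Sigma .
\]

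For (1), write $H(p,r)=r^{n-1}\int_{\partial B_1}|\beta(p+r\omega)|^2\,d\omega$ and differentiate:
\[
H'=\tfrac{n-1}{r}H+\int_{\partial B_r}\partial_\nu|\beta|^2 .
\]
By the divergence theorem,
\[
\int_{\partial B_r}\partial_\nu|\beta|^2=\int_{B_r}\Delta|\beta|^2=2D .
\]
To justify this across $\Sigma$, excise a tubular neighbourhood $T_\epsilon$ of radius $\epsilon$ about $\Sigma$. By Theorem \ref{thm:structureofz2}, $|\beta|\lesssim |z|^{-1/2}$ and $|\nabla\beta|\lesssim|z|^{-3/2}$ (even with ${\bf A}\neq 0$). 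The boundary term on $\partial T_\epsilon$ is therefore $O(\epsilon^{-2}\cdot\epsilon\cdot\epsilon^{n-3}\cdot\ldots)$. More carefully, one needs $\int_{\partial T_\epsilon\cap B_r}|\beta||\nabla\beta|\lesssim \epsilon\cdot\epsilon^{-2}\cdot\mathrm{vol}(\Sigma)$, which does \emph{not} tend to $0$ naively.

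So I would instead use the finer structure. The leading term $\Re({\bf A}z^{1/2})$ contributes $\partial_r$ of $|d\Re({\bf A}z^{1/2})|^2\sim |{\bf A}|^2/(4|z|)$, which is radially symmetric in the normal plane. Its flux through $\partial T_\epsilon$ is $\int \partial_\rho(|{\bf A}|^2/4\rho)\,\rho\,d\theta\sim -\epsilon^{-1}$. This diverges, so when ${\bf A}\neq0$ we have $D=\infty$.

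I therefore restrict to the case relevant here: either ${\bf A}\equiv0$, or assume $\beta\in W^{1,2}_{\operatorname{loc}}$ (which the definition of $D$ implicitly requires). In that setting $|\beta|\lesssim |z|^{1/2}$ and $|\nabla\beta|\lesssim|z|^{-1/2}$. The boundary flux is then $O(\epsilon\cdot\epsilon^{0})\to0$, and $D<\infty$. Absolute continuity of $D$ and the formula for $D'$ follow from the coarea formula, since $|\nabla\beta|^2\in L^1_{\operatorname{loc}}$. $H$ is $C^1$ because $\beta$ is continuous and $\partial_\nu|\beta|^2\in L^1$ uniformly on spheres. Monotonicity of $h$ follows from $(r^{1-n}H)'=2r^{1-n}D\geqslant0$.

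For (2), the main step is the Rellich–Pohozaev identity
\[
D'(r)=\frac{n-2}{r}D(r)+2\int_{\partial B_r}|\nabla_\nu\beta|^2 .
\]
Since $\beta$ is vector-valued, I will use the Bochner-type quantity $|\nabla\beta|^2$ rather than $|d u|^2$. Apply the divergence theorem to the vector field
\[
X=|\nabla\beta|^2(x-p)-2\langle \nabla_{x-p}\beta,\nabla\beta\rangle ,
\]
whose divergence is $(n-2)|\nabla\beta|^2$ because $\Delta\beta=0$. Again excise $T_\epsilon$. Here the boundary term is $\lesssim \epsilon\cdot\epsilon^{-1}\cdot\epsilon^{?}$, and this is the delicate point. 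With ${\bf A}\equiv0$ we have $|\nabla\beta|^2\sim|{\bf B}|^2|z|^{-1}$, and the flux through $\partial T_\epsilon$ is $O(\epsilon\cdot\epsilon^{-1})=O(1)$. This does not vanish automatically, so I would compute its leading term explicitly from the model $\Re({\bf B}z^{3/2})$. The normal-plane term $\int_0^{2\pi}(|\nabla\beta|^2\langle x-p,\hat n\rangle-2\langle\nabla_{x-p}\beta,\nabla_{\hat n}\beta\rangle)\epsilon\,d\theta$ is evaluated using the $\tfrac12$-homogeneity of the model $d\Re({\bf B}z^{3/2})$ in $z$. For the homogeneous model, the $z$-directional part cancels exactly, as in the classical Pohozaev identity for $\Re z^{3/2}$ in $\R^2$. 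The remaining parts are $O(\epsilon^{\alpha})$ from the error $E$.

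I expect this cancellation to be the main obstacle. I would verify it by reducing to the 2D model $v=\Re(bz^{3/2})$, checking $\int_{|z|=\epsilon}(|\nabla dv|^2\epsilon-2\epsilon|\partial_\rho dv|^2)\,d\theta=0$ by direct computation. This holds because $|\partial_\rho dv|^2=\tfrac14\epsilon^{-2}|dv|^2$ and $|\partial_\theta dv|^2/\epsilon^2$ balance by harmonicity.

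Similarly, the flux identity $\int_{\partial B_r}\langle\beta,\nabla_\nu\beta\rangle=D$ gives $H'=\tfrac{n-1}{r}H+2\int\langle\beta,\nabla_\nu\beta\rangle$. The quotient rule then yields
\[
\frac{I'}{I}=\frac1r+\frac{D'}{D}-\frac{H'}{H},
\]
which simplifies to \eqref{eqn:dfreq}. Nonnegativity is Cauchy–Schwarz. Well-definedness, i.e.\ $H>0$, follows from unique continuation for the harmonic section $\beta$ on the connected set $\R^n\setminus\Sigma$: $H(r_0)=0$ together with monotonicity of $h$ forces $\beta\equiv0$ on $B_{r_0}$. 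Equation \eqref{eqn:HtoI} is just $h'/h=2D/H$.

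For (3), $I'\equiv0$ gives equality in Cauchy–Schwarz on a.e.\ sphere, so $\nabla_\nu\beta=c(r)\beta$ with $c(r)=D/H=I_0/r$. Integrating along rays yields $\beta(p+r\omega)=r^{I_0}\beta(p+\omega)$, which is the claimed homogeneity.
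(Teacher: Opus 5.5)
Your proposal is correct and is essentially the paper's argument. The paper proves the domain-variation identity $2\int\langle S_\beta,\nabla X^\flat\rangle-\int|\nabla\beta|^2\operatorname{div}X=0$ using a cutoff $\chi(\rho/\varepsilon)$ rather than a sharp excision, and kills the boundary defect by the same pointwise cancellation of the $\rho^{-1}$ terms coming from $\operatorname{Re}({\bf B}z^{3/2})$; it then specialises to radial fields to get your Rellich identities, $I'$ via Cauchy--Schwarz, and $H>0$ via unique continuation.

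One point to tighten: for $p\notin\Sigma$ the vector $x-p$ on $\partial T_\epsilon$ has an $O(1)$ angular component, so the balance $|\nabla_{\partial_\rho}\beta|=|\nabla_{\rho^{-1}\partial_\theta}\beta|$ that you check is not enough on its own. You must also check that the mixed term $\langle\nabla_{\partial_\rho}\beta,\nabla_{\rho^{-1}\partial_\theta}\beta\rangle$ vanishes at leading order. It does, since these two derivatives correspond to $e^{i\theta}g'$ and $ie^{i\theta}g'$ for $g=\tfrac32{\bf B}z^{1/2}$; the paper records this as $S_\beta(\partial_\rho,\rho^{-1}\partial_\theta)=\mathcal{O}(\rho^{\alpha-1/2})$.
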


\begin{proof}By the structure Theorem \ref{thm:structureofz2}, we have
the following asymptotic expansion near any point $q\in \Sigma$,
    \begin{equation}
        \begin{aligned}
        \label{eqn:asymptoteofbeta}
            \beta=d\Re({\bf{B}}(t)z^{3/2})+dE,\quad  \forall t\in \Sigma,
        \end{aligned}
    \end{equation}
    with error estimates $|\nabla E|\lesssim |z|^{1+\alpha}$ and $|\nabla^2 E|\lesssim |z|^\alpha$.  

Following \cite[Chapter 3]{DeLellis-Spadaro}, we first show the following stationary equation,
    \begin{equation}
        \label{eqn:first-variation-of-beta}
        2\int_{\R^n}\langle S_\beta,\nabla X^\flat\rangle-\int_{\R^n}|\nabla \beta|^2\operatorname{div}X=0,
    \end{equation}
    for any smooth compactly supported vector field $X$ on $\R^n$, where $S_\beta(\cdot,\cdot)=\langle \nabla_\cdot\beta,\nabla_\cdot\beta\rangle$ is a symmetric $(0,2)$-tensor and $\nabla X^\flat(\cdot,\cdot)=g(\nabla_\cdot X,\cdot)$ is a $(0,2)$-tensor. Here and below, we freely use the inner product on different associated bundles induced by the flat structure of $\mathcal{L}$ and the Euclidean metric on $\R^n$.

    Let $\chi(s)=\left\{\begin{array}{cc}
       0  &  s\leqslant 1,\\
       1  &  s\geqslant 2,
    \end{array}\right.$ be a cut-off function. 
    Set $\rho=d(\cdot,\Sigma)$, and consider the following test section
    $\varphi=\chi_\varepsilon\nabla_X\beta$ where $\chi_\varepsilon=\chi(\rho/\varepsilon)$ and $\varepsilon\ll1 $ is sufficiently small (to be specified below). The harmonicity of $\beta$ implies the stationarity equation away from $\Sigma$,
\begin{equation*}
    \begin{aligned}
        0&=\int_{\R^n} \langle \nabla \beta,\nabla(\chi_\varepsilon \nabla_X\beta)\rangle=\int_{\R^n}\chi_\varepsilon \langle S_\beta,\nabla X^\flat\rangle+\frac{1}{2}\int_{\R^n}\chi_\varepsilon\langle \nabla |\nabla \beta|^2,X\rangle+\int_{\R^n} S_\beta(\nabla \chi_\varepsilon,X)
        \\&=\int_{\R^n} \chi_\varepsilon \left(\langle S_\beta,\nabla X\rangle-\frac{1}{2}|\nabla \beta|^2\operatorname{div}X \right)+\int_{\varepsilon \leqslant\rho\leqslant2\varepsilon}  S_\beta(\nabla \chi_\varepsilon,X)-\frac{1}{2}|\nabla \beta|^2 \langle X,\nabla \chi_\varepsilon\rangle.
    \end{aligned}
\end{equation*}  
 We now need to prove the defect of the last term tends to zero as $\varepsilon\to 0+$. We remark that the na\"ive estimate $|\nabla \beta|\lesssim \rho^{-1/2}$ is not sufficient to control this error (even if we replace $\chi_\varepsilon$ by a logarithmic cut-off)
since 
$$\int_{\varepsilon}^{2\varepsilon} \underbrace{\rho^{-1}}_{|\nabla \chi_\varepsilon|}\cdot \underbrace{\rho^{-1}}_{|\nabla \beta|^2}\rho d\rho\sim \mathcal{O}(1).$$
The key observation is that the leading $\rho^{-1/2}$-terms in $\nabla\beta$ arise only from taking derivatives of $\beta$ transverse to $\Sigma$, and they indeed cancel in the total integrand via the following differential-geometric calculation. 

Let $(t_1,\cdots,t_{n-2},z)$ be a local Fermi coordinate system defined on an open neighbourhood $U\subset \R^n $ of any given $q\in \Sigma$. Then we have $\rho=d(\cdot,\Sigma)=|z|$ and $\nabla\rho=\partial_\rho$. The metric tensor behaves as $g=g_{\Sigma}+g_{\C}+\mathcal{O}(\rho)$ and hence $\nabla^g-\nabla^{g_{\Sigma}+g_\C}=\mathcal{O}(1)$. The upshot is the deviation of the metric structure in Fermi coordinates does not affect the leading term we aim to cancel. 

Around $q$, the asymptote \eqref{eqn:asymptoteofbeta} with derivative bounds on the error $E$ implies that
\begin{equation}\label{eqn:hessianu}
    ~\left\{\begin{array}{l}
        \nabla_{\partial_\rho} \beta=\frac{3}{4}\rho^{-1/2}\operatorname{Re}({\bf{B}}e^{3i\theta/2}) d\rho-\frac{3}{4}\rho^{-1/2}\operatorname{Im}({\bf{B}}e^{3i\theta/2})(\rho d\theta)+\mathcal{O}(\rho^{\alpha}),\\
        \nabla_{\rho^{-1}\partial_\theta} \beta=-\frac{3}{4}\rho^{-1/2}\operatorname{Im}({\bf{B}}e^{3i\theta/2}) d\rho-\frac{3}{4}\rho^{-1/2}\operatorname{Re}({\bf{B}}e^{3i\theta/2})(\rho d\theta)+\mathcal{O}(\rho^{\alpha}),\\
        \nabla_{\partial_{t_i}}\beta=\mathcal{O}(\rho^{\alpha}).
    \end{array}\right.
\end{equation}
Thus, 
\begin{equation*}
    \left\{\begin{array}{l}
        S_\beta(\partial_\rho,\partial_\rho)=\frac{9}{16}\rho^{-1}|{\bf{B}}|^2+\mathcal{O}(\rho^{\alpha-1/2})\\ 
        S_\beta(\partial_\rho,\rho^{-1}\partial_\theta)=\mathcal{O}(\rho^{\alpha-1/2}),\\
        |\nabla \beta|^2=\frac{9}{8}\rho^{-1}|{\bf B}|^2+\mathcal{O}(\rho^{\alpha-1/2}).
         
    \end{array}\right.
\end{equation*}
 Now expand $X$ in the chosen Fermi co-ordinates around $q$, say $X=X_\rho\partial_\rho +X_\theta\rho^{-1}\partial_\theta+X_{t_i}\partial_{t_i}$ with $|X_{\rho}|,|X_\theta|,|X_{t_i}|\sim \mathcal{O}(1)$. Then within the support of $\nabla \chi_\varepsilon\subset\{\varepsilon\leqslant \rho\leqslant 2\varepsilon\}$, we have
 \begin{equation*}
     \begin{aligned}
         &\quad S_{\beta}(\nabla\chi_{\varepsilon},X)-\frac{1}{2}|\nabla \beta|^2\langle X,\nabla\chi_\varepsilon\rangle=\frac{\chi'}{\varepsilon}\left(S_\beta(\partial_\rho,X)-\frac{1}{2}|\nabla \beta|^2\langle X,\partial_\rho\rangle\right)
         \\&=\frac{\chi'}{\varepsilon}(\underbrace{X_\rho S_{\beta}(\partial_\rho,\partial_\rho)-\frac{X_\rho}{2}|\nabla \beta|^2\langle \partial_\rho,\partial_\rho\rangle}_{\mathcal{O}(\rho^{\alpha-1/2})}+\underbrace{S_\beta(\partial_\rho,X-X_\rho\partial_\rho)}_{\mathcal{O}(\rho^{\alpha-1/2})}-\underbrace{\frac{1}{2}|\nabla \beta|^2\langle X-X_\rho\partial_\rho,\partial_\rho\rangle}_{\mathcal{O}(\rho^{-1}\cdot \rho)}
         \\&=\mathcal{O}(\rho^{\alpha-3/2}).
     \end{aligned}
 \end{equation*}
The above error estimate is then \textit{global} once we take $\varepsilon\leqslant \frac{1}{4}\operatorname{inj}_\Sigma$, where $\operatorname{inj}_\Sigma$ denotes the injectivity radius of the normal exponential map of the \textit{closed} codimension--$2$ submanifold $\Sigma \subset \R^n$. Then \eqref{eqn:first-variation-of-beta} follows from the basic fact that $\int_{\varepsilon}^{2\varepsilon}$ $\rho^{\alpha-3/2}\rho d\rho\lesssim \varepsilon^{1/2+\alpha}\to 0$. 

Now the conclusion follows by standard arguments as in \cite[Chapter 3]{DeLellis-Spadaro}, which we sketch for completeness. Using test vector fields $X=\phi_N(|x-p|)(x-p)$, with $\phi_N=\left\{ \begin{array}{lc}
        1 &  t\leqslant r-\frac{1}{N}\\ 
        N(r-t) & r-\frac{1}{N}\leqslant t\leqslant r
        \\ 0&~r\leqslant t
    \end{array} \right.$ up to smooth approximation and letting $N\to \infty$, the following two identities hold for a.e. $r>0$, 
    \begin{equation}
    \label{eqn:Rellich}
        \begin{aligned}
            (n-2)D(p,r)&=r\int_{\partial B_r(p)}|\nabla \beta|^2-2r\int_{\partial B_r(p)} |\nabla_\nu\beta|^2,
            \\D(p,r)&=\int_{\partial B_r(p)}\langle \beta,\nabla_\nu \beta\rangle. 
        \end{aligned}
    \end{equation}
    As $|\beta| \in C^{0,\alpha}_{\operatorname{loc}}\cap W^{1,2}_{\operatorname{loc}}$, $D(p,r),H(p,r)$ are well-defined absolutely continuous functions. The differentiation formulae \eqref{eqn:diffofDH} and \eqref{eqn:dfreq} (if $I$ is well-defined) follow from \eqref{eqn:Rellich}, and hence $h'(p,r)=2r^{1-n}D(r)\geqslant 0$. Therefore, if $H(p,r_0)=0$ for some $r_0>0$, then $\beta |_{B_{r_0}(p)}\equiv 0$. The unique continuation property then implies $\beta \equiv 0$ on $\R^n\setminus\Sigma$, and thus on the whole $\R^n$ by continuity, contradicting our assumption that $\beta$ is non-zero.  

    For the last item, using \eqref{eqn:dfreq} and \eqref{eqn:diffofDH}, we know $I\equiv I_0$ is equivalent to $\nabla_\nu \beta(p+x)=\frac{I_0}{|x|}\beta(p+x)$ for a.e. $x$. Integrating over the segment between $p+r_1x$ and $p+r_2x$ with $0<r_1<r_2<r_0$ and $|x|\leqslant 1$, we obtain the pointwise identity (by the continuity of $\beta$) $\beta(p+rx)=(\frac{r}{s})^{I_0}\beta (p+sx)$.  
\end{proof}

\begin{cor}\label{cor:freqofdu}
    For $u$ as in \eqref{eqn:higher-order derivative estimates}, the following holds.  
    \begin{enumerate}
        \item [\textnormal{(1)}] The leading asymptote ${\bf{B}}\in \Gamma(N_{\Sigma}^{-3/2})$ is nowhere vanishing. 
        \item [\textnormal{(2)}] The $\Z_2$-harmonic $1$-form $du$ does not vanish away from the branch set, \ie $|du|^{-1}(0)=\Sigma$. 
    \end{enumerate}
\end{cor}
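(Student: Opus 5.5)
The plan is to compare the frequency at small scales around a point with the frequency at infinity, using the monotonicity of $I(p,r)$ from Proposition \ref{prop:freqofdu}.

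\textbf{Frequency at infinity.} Fix $p\in\R^n$ and apply Proposition \ref{prop:freqofdu} to $\beta=du$. By \eqref{eqn:higher-order derivative estimates}, for $r\gg1$ we have $\beta=2Mx+\mathcal{O}(r^{\mu-1})$ and $\nabla\beta=2M+\mathcal{O}(r^{\mu-2})$, with $\mu<0$. Since $M$ is non-degenerate, this gives
\[
H(p,r)\sim c_1 r^{n+1},\qquad D(p,r)\sim c_2 r^{n},\qquad c_1=\int_{S^{n-1}}|2M\omega|^2,\quad c_2=|B_1|\,|2M|^2 .
\]
Translating by $p$ only changes lower-order terms. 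As a check, the linear $1$-form $2Mx$ is $1$-homogeneous, so the identity $D=\int_{\partial B_r}\langle\beta,\nabla_\nu\beta\rangle$ from \eqref{eqn:Rellich} forces $rD/H\to1$. Hence $\lim_{r\to\infty}I(p,r)=1$ for every $p$. By monotonicity, $I(p,r)\leqslant1$ for all $r>0$, and so $\operatorname{ord}_\beta(p)\leqslant1$.

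\textbf{Local frequency at points of $\Sigma$.} Let $q\in\Sigma$ and suppose ${\bf B}(q)=0$. By Theorem \ref{thm:structureofz2}, $\beta=d\Re({\bf B}(t)z^{3/2})+dE$, with ${\bf B}(t)=\mathcal{O}(|t|)$ near $q$ and $|\nabla E|\lesssim|z|^{1+\alpha}$. On $B_r(q)$ this gives $|\beta|\lesssim r^{3/2}+r^{1+\alpha}\lesssim r^{1+\alpha}$. Therefore $h(q,r)=r^{1-n}H(q,r)\lesssim r^{2+2\alpha}$.

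On the other hand, integrating \eqref{eqn:HtoI} with $I(q,s)\leqslant\operatorname{ord}_\beta(q)+\epsilon$ for small $s$ gives $h(q,r)\gtrsim r^{2\operatorname{ord}_\beta(q)+2\epsilon}$. Comparing the two bounds yields $\operatorname{ord}_\beta(q)\geqslant1+\alpha>1$, which contradicts $\operatorname{ord}_\beta(q)\leqslant1$. So ${\bf B}$ is nowhere vanishing, which is item (1).

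The main subtlety is that $E$ only has rate $|z|^{2+\alpha}$ with $\alpha<1/2$, so the vanishing order is only $1+\alpha$ rather than $3/2$. This is still strictly greater than $1$, which is all the argument needs. We also need ${\bf B}(t)=\mathcal{O}(|t|)$, which holds because ${\bf B}$ is smooth.

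\textbf{Zeros off $\Sigma$.} Let $p\notin\Sigma$ with $\beta(p)=0$. Near $p$, $u$ is an ordinary harmonic function and $\beta$ is smooth, so $|\beta(x)|\lesssim|x-p|$. As above, this gives $h(p,r)\lesssim r^2$, and hence $\operatorname{ord}_\beta(p)\geqslant1$. Combined with $I(p,\cdot)\leqslant1$, we get $I(p,r)\equiv1$ for all $r>0$. By Proposition \ref{prop:freqofdu}(3), $\beta$ is then $1$-homogeneous about $p$ on all of $\R^n$.

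A $1$-homogeneous $\beta$ satisfies $\beta(p+\lambda x)=\lambda\beta(p+x)$, so its zero set and branch set must be cones with vertex $p$. But $\Sigma$ is non-empty and compact, so it cannot be invariant under dilations about $p$ unless $\Sigma\subset\{p\}$, which is impossible since $\Sigma$ has codimension $2$ and $n\geqslant3$. Alternatively, near $q\in\Sigma$ the form $\beta$ genuinely has $\rho^{1/2}$ behaviour, which is incompatible with the smoothness that homogeneity would propagate from the regular point $p$. This contradiction gives item (2).

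The delicate point in this last step is justifying that homogeneity transports the branch locus. I would argue it via the identity $\beta(p+rx)=(r/s)\beta(p+sx)$, applied along rays through points of $\Sigma$.
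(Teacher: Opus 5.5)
Your proposal is correct and follows essentially the same route as the paper. In both, $\lim_{r\to\infty}I(p,r)=1$ together with monotonicity gives $I\leqslant 1$, and the bound $h(q,r)\lesssim r^{2+2\alpha}$ at a zero of ${\bf B}$ then contradicts this for item (1). For item (2), a zero off $\Sigma$ forces $I\equiv1$ and hence $1$-homogeneity about $p$.

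The only difference is the final contradiction in item (2). The paper uses the asymptote to get $p=0$ and $du=\pm dV$, so $\Sigma\subset|du|^{-1}(0)=\{0\}$. The step you flagged as delicate (that the branch set itself is a cone) is not needed. Since $|\beta|$ is $1$-homogeneous about $p$, the zero set $|\beta|^{-1}(0)\supset\Sigma\neq\emptyset$ contains a whole ray from $p$. That contradicts $|\beta|\geqslant c|x|$ near infinity, which follows from the non-degeneracy of $M$.
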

\begin{proof}Since $u$ is asymptotic to a non-degenerate harmonic polynomial with degree $2$, a direct calculation implies
$$\operatorname{ord}_\infty(\beta):=\lim_{r\to \infty}I(p,r)=1,\quad \forall p\in \R^n.$$

\begin{itemize}
    \item To show item (1), employing the monotonicity of $I$ and $\operatorname{ord}_\infty(\beta)=1$, it suffices to prove the claim that $\operatorname{ord}_0(p_0)\geqslant 1+\alpha$ if ${\bf{B}}(p_0)=0$ for some $p_0\in \Sigma$. Suppose to the contrary that there exist $\varepsilon>0$ and $r_0>0$, such that 
    $I(p_0,r)\leqslant 1+\alpha-\varepsilon$ for $r\leqslant r_0$. Then \eqref{eqn:HtoI}
 implies that
 $$h(p_0,r)=h(p_0,r_0)\exp\left(-2\int_r^{r_0}\frac{I(p_0,s)}{s}ds\right)\geqslant Cr^{2(1+\alpha-\varepsilon)}.$$
 On the other hand, taking a Fermi coordinate system $(t,z)=(t,\rho e^{i\theta})$ centred at $p_0=(0,0)$ as in the proof of Proposition \ref{prop:freqofdu}, the asymptote of $u$ around $p_0$ implies that 
 $$h(p_0,r)=r^{1-n}\int_{\partial B_r(p_0)}|\beta|^2\leqslant C\sup_{\partial B_r(p_0)}|\beta|^2\leqslant C \max \{|t|^2\rho,r^{2(1+\alpha)}\}\leqslant Cr^{2(1+\alpha)},~~r\sim |t|+\rho\ll1$$
 where we use the fact that ${\bf B}\in \Gamma(N_\Sigma^{-3/2})$ is $C^\infty$ along $\Sigma$. Taking $r\to 0^+$, the above two estimates contradict each other.  
    \item For item (2), assume that $du$ vanishes at a point $p_0\in \R^n\setminus\Sigma$. Note that away from the branch set $\Sigma$, $u$ can locally be regarded as a single-valued harmonic function in the usual sense, so does $du$. It follows that $\lim\limits_{r\to 0}I(p_0,r)$ coincides with the vanishing order $m\geqslant 1$ of $du$ at $p_0$. Then the monotonicity of $I$, together with the linear growth at the infinity, forces $m=1$, $I(p_0,r)\equiv 1$ for $r>0$ and hence $du$ is $1$-homogeneous with respect to the centre $p_0$ by Proposition \ref{prop:freqofdu}. The asymptote at infinity then forces $p_0=0$ and $du=\pm dV$, and hence $\Sigma\subset |du|^{-1}(0)=\{0\}$, a contradiction. 
\end{itemize}
\end{proof}
\begin{rmk}[Compare with the uniqueness result in \cite{Sun-Pell,Haydys-Mazzeo-Takahashi-Examples}]
    Actually, one can show (similar to Proposition \ref{prop:freqofdu}, but much easier) that the frequency function is well-defined and non-decreasing with respect to $r>0$, for those $\Z_2$-harmonic functions we consider in the main Theorem \ref{mainthm} \textit{but with general polynomial growth} at infinity. 
    Then a very similar argument rules out $\Z_2$-harmonic functions with nontrivial branching locus, whose asymptote at infinity is modelled on an affine linear function.

\end{rmk}
\section{Floer-theoretic Part}
\label{Sec:floer-theory}
\subsection{Approximate special Lagrangian graph} 
From this subsection onward, we shall repeatedly use the geometric meaning of non-degenerate $\Z_2$-harmonic $1$-forms -- describing infinitesimal branched deformations of the special Lagrangian $\Pi_0=\R^n\subset \C^n$ as a two-valued graph. The following proposition illustrates this quantitatively.

\begin{prop}\label{prop:appslag}
    There exists a small $\delta_0=\delta_0(u)>0$, such that for all $0<\delta<\delta_0$, the two-valued graph $L_{\delta du}:=\operatorname{graph}(\delta du)$ is a connected,  exact, graded, $C^{1,\alpha}$--embedded Lagrangian in $\C^n$, asymptotically conical with rate $\mu<0$ to the union of two planes 
    $$\Pi^\delta_+=(e^{i\psi_1},\cdots,e^{i\psi_n})\Pi_0,\quad \Pi^\delta_-=(e^{-i\psi_1},\cdots,e^{-i\psi_n})\Pi_0,$$
    where $\psi_k=\arctan (2\delta M_k)\in (-\frac{\pi}{2},\frac{\pi}{2})$. With suitable normalisation, the Lagrangian angle $\theta_{L_{\delta du}}$ satisfies 
    \begin{equation}
        \label{eqn:estgrad}
        |\theta_{L_{\delta du}}|\leqslant C\delta^{1+\alpha},
    \end{equation}
    for some constant $C>0$ \textit{independent} of $\delta$. 
\end{prop}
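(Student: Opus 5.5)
The plan is to assemble the qualitative properties from earlier results and then do one Hessian computation for the Lagrangian angle.

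\textbf{Qualitative properties.} By Corollary \ref{cor:freqofdu}, $u$ is non-degenerate with $|du|^{-1}(0)=\Sigma$. Lemma \ref{lem:reg+potential} then shows that for $\delta$ small, $L_{\delta du}$ is a connected, closed, exact, $C^{1,\alpha}$-embedded Lagrangian, with potential $f=-\delta u+\frac{\delta}{2}\langle x,\nabla u\rangle$.

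\textbf{Asymptotics.} By \eqref{eqn:higher-order derivative estimates}, on each branch at infinity $\delta du=\pm 2\delta Mx+\mathcal{O}(\delta|x|^{\mu-1})$. So $L_{\delta du}$ is a $C^1$-small graph over the linear Lagrangian $\{(x,\pm2\delta Mx)\}$, with decay rate $\mu-1$ in the $1$-form, i.e. rate $\mu<0$ relative to the cone. In complex coordinates the plane $\{(x_k,2\delta M_kx_k)\}$ is $\{x_k(1+2i\delta M_k)\}$, which is $e^{i\psi_k}\R$ with $\psi_k=\arctan(2\delta M_k)$. Hence the two ends are asymptotic to $\Pi^\delta_\pm$.

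\textbf{Grading and the angle estimate.} Away from $\Sigma$, the tangent plane to the graph of $\delta du$ is $(I+i\delta\nabla^2u)\R^n$. So, locally on each branch,
\[
\theta_{L_{\delta du}}=\sum_k\arctan(\delta\lambda_k(\nabla^2u)),
\]
where the $\lambda_k$ are the Hessian eigenvalues. Since $\operatorname{tr}\nabla^2u=\Delta u=0$, we get $\theta=\mathcal{O}(\delta^3|\nabla^2u|^3)$ wherever $\delta|\nabla^2u|\lesssim1$. This expression is continuous and single-valued on $\widetilde{\R^n}\setminus\Sigma$, giving the normalisation (the grading). The estimate splits into two regions.
\begin{itemize}
\item \emph{Far from $\Sigma$.} At infinity $|\nabla^2u|$ is bounded, and on compact sets away from $\Sigma$ it is bounded as well. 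So $|\theta|\le C\delta^3$ there.
\item \emph{Near $\Sigma$ (the main obstacle).} Here $|\nabla^2u|\sim\rho^{-1/2}$ by \eqref{eqn:hessianu}, so the small-Hessian expansion fails once $\rho\lesssim\delta^2$. In that region I would use a more careful argument. The dominant part of the Hessian is the trace-free $2\times2$ block in the normal $z$-directions, with eigenvalues $\pm\frac34\rho^{-1/2}|{\bf B}|$. Its two arctangents cancel exactly, by the oddness of $\arctan$.
\end{itemize}

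\textbf{Handling the remaining Hessian terms near $\Sigma$.} After removing the dominant block, the remainder of $\nabla^2u$ is $\mathcal{O}(\rho^\alpha)$ by Theorem \ref{thm:structureofz2}. Write $\nabla^2u=H_0+R$ with $H_0$ the trace-free normal block and $|R|\lesssim\rho^{\alpha}$ (or $\lesssim 1$). Then use the Lipschitz bound on $\theta(H)=\arg\det(I+i\delta H)$. Its derivative is $\delta\,\operatorname{tr}\big((I+i\delta H)^{-1}\,\cdot\,\big)$, which is bounded by $\delta$ because $(I+i\delta H)^{-1}$ has norm at most $1$ for symmetric $H$. This gives
\[
|\theta(H_0+R)-\theta(H_0)|\le C\delta|R|.
\]
Combining this with $\theta(H_0)=0$ yields $|\theta|\le C\delta$ near $\Sigma$. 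That is not yet $\delta^{1+\alpha}$.

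\textbf{Improving to $\delta^{1+\alpha}$.} Split at $\rho=\delta^2$. In the outer zone $\rho\ge\delta^2$, expand the angle to second order. The cubic terms give $\delta^3\rho^{-3/2}\lesssim1$. A more careful estimate uses that the quadratic cross terms cancel by the trace-free property, leaving a bound of order $\delta\cdot\delta\rho^{-1/2}\cdot\rho^\alpha$. In the inner zone $\rho\le\delta^2$, the Lipschitz bound gives $C\delta\rho^\alpha\le C\delta^{1+2\alpha}$. Balancing these two zones should give $|\theta|\le C\delta^{1+\alpha}$, possibly after shrinking $\alpha$. Continuity across $\Sigma$ then follows from the $C^{1,\alpha}$ regularity of $L_{\delta du}$.
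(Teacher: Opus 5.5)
\textbf{Verdict: there is a gap, but only in the final step, and it can be closed with ingredients you already have.}

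Your qualitative part, the identification of $\Pi^\delta_\pm$ via $x_k(1+2\sqrt{-1}\delta M_k)\in e^{\sqrt{-1}\psi_k}\R$, and your near-$\Sigma$ mechanism all match the paper. That mechanism is: split $\nabla^2u$ into the trace-free normal block with eigenvalues $\pm\frac34\rho^{-1/2}|{\bf B}|$ plus an $\mathcal{O}(\rho^\alpha)$ remainder, cancel the block by oddness of $\arctan$, and use $\|(I+\sqrt{-1}\delta H)^{-1}\|\leqslant 1$ for symmetric $H$. The paper's $S'=\sqrt{-1}\delta(I+\sqrt{-1}\delta N)^{-1}S$ is the same device.

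The gap is in the final step, which is the only place the exponent $1+\alpha$ is produced. There are three problems:
\begin{itemize}
\item Your Lipschitz step gives $|\theta|\leqslant C\delta|R|\leqslant C\delta\rho^{\alpha}$, not merely $C\delta$. You discarded the factor $\rho^\alpha$.
\item Splitting at $\rho=\delta^2$ is the wrong scale for the outer-zone estimate you then invoke. Near $\rho\sim\delta^2$ one has $\delta|\nabla^2u|\sim 1$, so no expansion of $\arctan$ in powers of $\delta\nabla^2u$ is controlled there. Indeed the cubic remainder $\delta^3\rho^{-3/2}$ is only $\mathcal{O}(1)$.
\item The asserted cancellation of ``quadratic cross terms'' leading to $\delta^2\rho^{\alpha-1/2}$ is stated but not derived. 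Likewise, ``balancing should give $\delta^{1+\alpha}$, possibly after shrinking $\alpha$'' is not an argument, and shrinking $\alpha$ would in any case weaken the bound you are asked to prove.
\end{itemize}

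\textbf{Fix: split at $\rho\sim\delta$, as the paper does.}
\begin{itemize}
\item For $\rho\lesssim\delta$, your Lipschitz bound gives $|\theta|\lesssim\delta\rho^\alpha\lesssim\delta^{1+\alpha}$.
\item For $\rho\gtrsim\delta$, one has $\delta|\nabla^2u|\lesssim\delta\cdot\delta^{-1/2}=\delta^{1/2}\ll1$. Since $|\arctan x-x|\leqslant|x|^3$ and $\Delta u=0$, this gives $|\theta|\leqslant\sum_k|\delta\lambda_k|^3\lesssim\delta^{3/2}\leqslant\delta^{1+\alpha}$, because $\alpha<\frac12$.
\end{itemize}

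If you want to keep your $\delta^2$-scale idea, the cancellation can be made rigorous as follows. Use the integral form $\theta(H_0+R)=\int_0^1\delta\operatorname{tr}\big((I+\delta^2H_t^2)^{-1}R\big)\,dt$ with $H_t=H_0+tR$. Since $\operatorname{tr}R=0$, you may replace $(I+\delta^2H_t^2)^{-1}$ by $(I+\delta^2H_t^2)^{-1}-I$, whose norm is at most $\min\{1,C\delta^2\rho^{-1}\}$. This yields $|\theta|\lesssim\delta\min\{1,\delta^2\rho^{-1}\}\rho^\alpha\lesssim\delta^{1+2\alpha}$ near $\Sigma$. That is the kind of refinement the paper mentions in its remark after the proposition.
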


\begin{proof}
    All statements except for the Lagrangian angle estimates follow from Lemma \ref{lem:reg+potential}. We concentrate on \eqref{eqn:estgrad}. Outside $\Sigma$, as $\nabla^2u$ is a two-valued smooth function, we define a grading of $L_{\delta du}$ away from $\Sigma$ to be
    \begin{equation}
    \label{eqn:defgrad}
        \theta_{L_{\delta du}}=\sum_{k=1}^n\arctan(\delta \lambda_k),
    \end{equation}
where $\lambda_1,\cdots,\lambda_n$ are the eigenvalues of $\nabla^2 u$ with $\Delta u=\sum\limits_{k=1}^n\lambda_k=0$. It is clear that 
    \begin{equation}
        \label{eqn:anglemod2pi}
        \theta_{L_{\delta du}}\equiv \arg \det (I+\sqrt{-1}\delta \nabla^2u),\quad (\operatorname{mod}~2\pi)
    \end{equation}
where we choose the argument function $\arg:\C\to [-\pi,\pi)$ with discontinuity locus $\R_{<0}$. We aim to show \eqref{eqn:defgrad} extends continuously to $\Sigma$ (indeed, taking the value zero on $\Sigma$), and satisfies the angle estimate \eqref{eqn:estgrad}. 

On the region $\rho:=d(\cdot,\Sigma)\gtrsim \delta$, the asymptote at infinity \eqref{eqn:higher-order derivative estimates} and local asymptote around $\Sigma$ imply that 
$$|\delta \lambda_k|\leqslant |\delta \nabla^2 u|\lesssim \delta\cdot \delta^{-1/2}=\delta ^{1/2} \ll 1,$$ provided $\delta_0$ is chosen sufficiently small. Moreover, using  $\left|\operatorname{arctan}x-x\right|\leqslant C|x|^3$ for $x\in [-\frac{1}{100},\frac{1}{100}]$, we obtain the phase estimate away from the branch locus,
$$|\theta_{L_{\delta du}}|=\left|\theta_{L_{\delta du}}-\sum_{k=1}^n \delta \lambda_k\right|\leqslant \sum_{k=1}^n|\delta \lambda_k|^3\lesssim \delta^3|\nabla^2 u|^3\lesssim \delta^{3/2}.$$

Now we concentrate on the region $\rho \lesssim \delta$. The heuristic is that two large eigenvalues of $\nabla^2u$ (corresponding to directions normal to $\Sigma$) are opposites to each other. Since $\arctan$ is odd, their sum is small. The remaining $(n-2)$ eigenvalues arise from the mixed term or the purely $\Sigma$--direction, and are inherently small of order $\rho^\alpha$ due to the local asymptote. 

Around any point $q\in \Sigma$, using the Fermi coordinate system $(t,z=\rho e^{i\theta})$ centred at $q=(0,0)$ as in the proof of Proposition \ref{prop:freqofdu}, we split $\nabla^2u$ as 
\begin{equation}
\label{eqn:Hessunearbranch}
    \nabla^2u=N+S,\quad N=\left(\begin{array}{ccc}
    0_{(n-2)\times (n-2)}& &\\ &\frac{3}{4}\rho^{-1/2}\operatorname{Re}({\bf{B}}e^{3i\theta/2}) & -\frac{3}{4}\rho^{-1/2}\operatorname{Im}({\bf{B}}e^{3i\theta/2}) \\
    &-\frac{3}{4}\rho^{-1/2}\operatorname{Im}({\bf{B}}e^{3i\theta/2}) & -\frac{3}{4}\rho^{-1/2}\operatorname{Re}({\bf{B}}e^{3i\theta/2})
\end{array}\right),\quad S=\mathcal{O}(\rho^\alpha),
\end{equation}
with respect to $\partial_{t_1},\cdots,\partial_{t_{n-2}},\partial_\rho,\rho^{-1}\partial_\theta$ (\cf \eqref{eqn:hessianu}). It follows that $N$ admits eigenvalue $0$ with multiplicity $n-2$ corresponding to the $t$-directions, and $\pm \lambda$ with $\lambda=\frac{3}{4}\rho^{-1/2}|\bf{B}|$ in the normal direction, and $\det(I+\sqrt{-1}\delta N)=1+\delta^2\lambda^2>0$, with $\|(I+\sqrt{-1}\delta N)^{-1}\|\leqslant 1$. The geometric observation is that as one approaches $\Sigma$, the error $S=\mathcal{O}(\rho^\alpha)$ becomes negligible, the Lagrangian angle has a well-defined limit which vanishes on $\Sigma$ -- the tangent space of $L_{\delta du}$ is purely vertical in the transverse direction to $\Sigma$ and hence a special Lagrangian plane. 

We then show the quantitative bound $|\theta_{L_{\delta du}}|\lesssim \delta \rho^\alpha$. Set $S'=\sqrt{-1}\delta(I+\sqrt{-1}\delta N)^{-1}S$, with the estimate $\|S'\|\lesssim \delta\rho^{\alpha}$. Then
$$\arg\det(I+\sqrt{-1}\delta\nabla^2 u)=\arg\det\left((I+\sqrt{-1}\delta N)(I+S')\right)=\arg\det (I+S'),$$
which gives the phase estimate $|\arg \det (I+S')|\lesssim \|S'\|\lesssim\delta \rho^\alpha\lesssim \delta^{1+\alpha}$, and hence stays away from the discontinuity line $\R_{<0}\subset \C$ of the argument function $\arg$. If we further choose $\delta_0\leqslant \frac{1}{4}\operatorname{inj}_\Sigma$, then the estimate above becomes global as before. 

Since both $\theta_{L_{\delta du}}$ and $\arg\det (I+\sqrt{-1}\delta\nabla^2 u)$ are continuous and small in the region $\rho \sim \delta$, they must coincide with each other in the region $\rho \lesssim \delta$ by \eqref{eqn:anglemod2pi} and a simple topological argument. The continuous extension of $\theta_{L_{\delta du}}$ to $\Sigma$ then follows from the above discussion; one could also check by passing to the double branch cover as in Lemma \ref{lem:reg+potential}, which we omit here.
\end{proof}
\begin{rmk}
    The estimate \eqref{eqn:estgrad} is not optimal but will be sufficient for our later use; the crucial point is that we obtain the extra $\delta^\alpha$ factor due to the $\Z_2$-harmonicity and non-degeneracy of $u$. One could try to refine the estimate by using the polyhomogeneous expansion of $u$ around $\Sigma$, distinguishing the directions of derivatives (tangential or normal to $\Sigma$), and using a more refined division of regions, say $\rho \sim \delta^2$ as the characteristic scale where $|\delta \nabla^2 u|\sim \mathcal{O}(1)$. 
\end{rmk}
 
\subsection{Floer degree formula and Holomorphic Strip}
To warm up, we recall several basics in Lagrangian Floer theory (\cf \cite[Section 2]{IJS}) in this section. Given two graded Lagrangians $L,L'$ inside an ambient symplectic Calabi-Yau manifold $M$ with Lagrangian angles $\theta_L,\theta_{L'}$, at a transverse intersection point $p$, we put 
\begin{equation}
\mu_{L,L'}(p)= \frac{1}{\pi}\left( \sum_{k=1}^n \alpha_k + \theta_L(p)- \theta_{L'}(p) \right) ,  
\end{equation}
where, within $T_pM$, we identify
\[
T_pL\cong \R^n \subset \C^n ,\quad T_p L'\cong (e^{i\alpha_1},\ldots e^{i\alpha_n})\R^n,\quad 0<\alpha_i<\pi.
\]

Specialised to graphs of harmonic $1$-forms with \textit{bounded derivatives}, we have

\begin{lem}\label{lem:floerdegreeforgraph} 
      Let $F_1,F_2$ be two (single-valued) $C^\infty$-functions defined near $0\in \R^n$, with $|\nabla^2 F_1|,|\nabla^2 F_2|\leqslant 1$. Suppose that $dF_1(0)=dF_2(0)=p$, and $\nabla^2(F_2-F_1)(0)$ is non-degenerate. Then
      for $0<\tau\leqslant \tau_0(n)\ll 1$, the Floer degree at $(0,\tau p)$ agrees with the negative index of $\nabla^2(F_2-F_1)(0)$, 
      $$\mu_{L_{\tau dF_1},L_{\tau dF_2}}(0,\tau p)=\operatorname{ind}(\nabla^2(F_2-F_1)(0)),$$ where the Lagrangian angles of $L_{\tau dF_i}$ are defined to be $\theta_i:=\sum\limits_{k=1}^n \arctan (\tau \lambda_k^i)$ for the eigenvalues $\lambda_1^i,\cdots,\lambda_n^i$ of $\nabla^2F_i$ near $0$, and $i=1,2$.    
   \end{lem}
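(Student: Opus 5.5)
The plan is to reduce everything to linear algebra at the single point $(0,\tau p)$ and then compute the Kähler angles explicitly.

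The tangent spaces there are $T_1=\operatorname{graph}(\tau A_1)$ and $T_2=\operatorname{graph}(\tau A_2)$, where $A_i=\nabla^2F_i(0)$ are symmetric with $\|A_i\|\leqslant 1$. I would use a unitary transformation to straighten $T_1$. Specifically, set $U_1=(I+\sqrt{-1}\tau A_1)(I+\tau^2A_1^2)^{-1/2}\in U(n)$. It commutes with $A_1$ and maps $\R^n$ onto $T_1$. Its determinant has argument $\sum_k\arctan(\tau\lambda^1_k)=\theta_1(0)$. This is the key consistency check: the grading $\theta_1$ chosen in the lemma is exactly the phase of the unitary frame carrying $\R^n$ to $T_1$, with no $2\pi$ ambiguity because $|\tau\lambda|\leqslant\tau_0\ll1$. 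Applying $U_1^{-1}$, the pair $(T_1,T_2)$ becomes $(\R^n,\,U_1^{-1}T_2)$. Then $U_1^{-1}T_2$ is the image of $\R^n$ under $W:=U_1^{-1}U_2$, where $U_2$ is defined analogously. Its phase is $\arg\det W=\theta_2(0)-\theta_1(0)$, again computed without wrap-around since all quantities are $O(\tau)$.

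Next I would identify the angles $\alpha_k$. The plane $W\R^n$ is close to $\R^n$. It is transverse to $\R^n$ exactly because $A_2-A_1$ is non-degenerate, since $U_1^{-1}T_2\cap\R^n$ corresponds to $\ker(A_2-A_1)$. Write $U_1^{-1}T_2=\operatorname{graph}(\tau C)$ for a symmetric $C$. A short computation using $U_1^{-1}(x+\sqrt{-1}\tau A_2x)$ gives
\[
C=(A_2-A_1)+O(\tau).
\]
So for $\tau\leqslant\tau_0(n)$ the eigenvalues of $C$ have the same signs as those of $A_2-A_1$; this uses the non-degeneracy to keep them away from zero. Diagonalising $C$ by an orthogonal matrix, which fixes $\R^n$ and has phase zero, the plane is $(e^{\sqrt{-1}\gamma_k})\R^n$ with $\gamma_k=\arctan(\tau c_k)\in(-\pi/2,\pi/2)\setminus\{0\}$.

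The normalisation $0<\alpha_k<\pi$ forces $\alpha_k=\gamma_k$ if $c_k>0$ and $\alpha_k=\pi+\gamma_k$ if $c_k<0$. Moreover $\sum\gamma_k\equiv\theta_2-\theta_1$, and in fact with equality, since both sides are small. Plugging into the definition, the Floer degree comes out as
\[
\mu=\frac1\pi\Big(\sum\alpha_k+\theta_1-\theta_2\Big)=\#\{c_k<0\}+\frac1\pi\Big(\sum_k\gamma_k-(\theta_2-\theta_1)\Big)=\operatorname{ind}(A_2-A_1).
\]
This uses the convention $\mu_{L,L'}$ with $T_pL\cong\R^n$ after applying $U_1^{-1}$. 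The convention's unitary identification contributes exactly $\theta_L(p)$, consistent with the formula.

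The main obstacle is this bookkeeping of the exact (not merely mod $2\pi$) equality $\sum\gamma_k=\theta_2(0)-\theta_1(0)$. The determinant phases agree only modulo $2\pi$, and one must argue that all involved quantities are $O(\tau)$ so that no multiple of $2\pi$ appears. The smallness $\tau\leqslant\tau_0(n)$, together with $|\nabla^2F_i|\leqslant1$, gives this.

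A secondary point is uniformity of $\tau_0$ in $n$ only. This needs care, since the $O(\tau)$ perturbation of $C$ must not flip signs of small eigenvalues of $A_2-A_1$. To handle it, I would instead observe that $\operatorname{graph}(\tau C)\cap\R^n$ is trivial for every $\tau>0$, and that $C$ varies continuously in $\tau$. Hence the signature of $C$ is constant along the path as $\tau\to0$, and equals that of $A_2-A_1$, with no quantitative gap needed.
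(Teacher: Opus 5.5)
Your proposal is correct and follows essentially the paper's route: straighten $T_1$ by the unitary $(I+\sqrt{-1}\tau A_1)(I+\tau^2A_1^2)^{-1/2}$ (the inverse of the paper's $U_1$), write the transformed $T_2$ as the graph of a symmetric matrix $\tau C$, read off the $\alpha_k$ from the signs of the eigenvalues of $C$, and use the smallness of $\tau$ to upgrade the phase congruence to an equality. The only divergence is the final step $\operatorname{ind}(C)=\operatorname{ind}(A_2-A_1)$. The paper gets $\tau$-uniformity by congruence to $(A_2-A_1)^{-1}+\mathcal{O}(\tau^2)$, whose eigenvalues have modulus at least $1/2$ because $\|A_2-A_1\|\leqslant 2$; your argument instead notes that $C(\tau)$ is a continuous path of symmetric matrices that is non-degenerate for every $\tau$, since $T_1\cap T_2\cong\ker(A_2-A_1)=0$, and this gives the same uniformity equally well.
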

   \begin{proof} The main effect of compressing two graphs by $\tau\leqslant \tau_0\ll1$ is that we can freely use the identity 
   $$\arg \det(I+\sqrt{-1}\tau A)=\sum_{k=1}^n\arctan (\tau \mu_k), $$
for the eigenvalues $\mu_1,\cdots,\mu_n$ of a real symmetric matrix $\|A\|\lesssim 1$, and $\arg :\C\setminus \R_{<0}\to (-\pi,\pi)$.  
   
       Writing $A_i=\nabla^2F_i(0)$, the tangent plane at $(0,\tau p)$ is given by
       $$T_i:=T_{(0,\tau p)}L_{\tau dF_i}=\operatorname{Graph}(\tau\nabla^2F_i(0))=(I+\sqrt{-1}\tau A_i)\R^n,\quad i=1,2, $$
       where the origin of $\R^n$ corresponds to $(0,\tau p)$ in the original coordinates. 
       
       Then $U_1:=(I+\tau^2 A_1^2)^{-1/2}(I-i\tau A_1)\in U(n)$ satisfies
       $$U_1T_1=\R^n,\quad |U_1-Id|\lesssim \tau|A_1|\leqslant\tau,\quad \arg \det U_1=-\theta_{1}(0).$$
       To write $U_1T_2$ as a graph over $\R^n$, we calculate that 
       $$U_1T_2=(I+\tau^2A_1^2)^{-1/2}(I+\tau^2A_1A_2+\sqrt{-1}\tau (A_2-A_1))\R^n=(X_{\tau}+\sqrt{-1}\tau Y_\tau)\R^n,$$
       where $X_\tau=(I+\tau^2A_1^2)^{-1/2}(I+\tau^2A_1A_2)$ is invertible provided that $\tau\leqslant \frac{1}{2}$ and $Y_\tau=(I+\tau^2A_1^2)^{-1/2}(A_2-A_1)$. Hence $U_1T_2=\operatorname{Graph}(\tau Y_{\tau}X_{\tau}^{-1})$ over $U_1T_1=\R^n$. Since $A_2-A_1$ is non-degenerate, so is $Y_{\tau}X_{\tau}^{-1}$. We compute 
        \begin{equation}
        \begin{aligned}
            \label{eqn:symmetricgraphmatrix}
         X_\tau^TY_\tau&=(I+\tau^2A_2A_1)(I+\tau^{2}A_1^{2})^{-1}(A_2-A_1)
               \\&=(I+\tau^2(A_1+A_2-A_1)A_1)(I+\tau^2A_1^2)^{-1}(A_2-A_1)
               \\&=(A_2-A_1)+\tau^2(A_2-A_1)A_1(I+\tau^2A_1^2)^{-1}(A_2-A_1),
            \end{aligned}
        \end{equation}
       and 
       \begin{equation}
           \begin{aligned}
               Y_\tau^T X_\tau&=(A_2-A_1)(I+\tau^2A_1^2)^{-1}(I+\tau^2A_1A_2)
               \\&=(A_2-A_1)(I+\tau^2A_1^2)^{-1}(I+\tau^2A_1(A_1+A_2-A_1))
               \\&=(A_2-A_1)+\tau^2(A_2-A_1)(I+\tau^2A_1^2)^{-1}A_1(A_2-A_1)
               \\&=(A_2-A_1)+\tau^2(A_2-A_1)A_1(I+\tau^2A_1^2)^{-1}(A_2-A_1)\quad (A_1,(I+\tau^2A_1^2)^{-1} \textnormal{commute})
               \\&=X_\tau^TY_\tau.
           \end{aligned}
       \end{equation}
       It follows that $Y_\tau X_\tau^{-1}$ is symmetric. 
       Now, writing $\tau \mu_1,\cdots,\tau\mu_n$ as the eigenvalues of $\tau Y_\tau X_\tau^{-1}$ with respect to an orthogonal basis of $U_1T_1=\R^n$, we have 
       $$U_1T_2=(e^{i\alpha_1},\cdots,e^{i\alpha_n})\R^n,\quad \alpha_k=\left\{\begin{array}{ll}
          \arctan(\tau\mu_k),  & \mu_k>0, \\
            \arctan(\tau\mu_k)+\pi, & \mu_k<0,
       \end{array}\right.\quad 1\leqslant k\leqslant n.$$
       Thus, 
       $$\mu_{L_{\tau dF_1},L_{\tau dF_2}}(0,\tau p)=\frac{1}{\pi}\left(\sum_{k=1}^n \alpha_k+\theta_1(0)-\theta_2(0)\right)=\operatorname{ind}(Y_\tau X_{\tau}^{-1}),$$
       as $\theta_2(0)-\theta_1(0)=\arg\det (U_1T_2)=\arg\left(\det(I+\sqrt{-1}\tau Y_\tau X_\tau^{-1})\det(X_\tau)\right)=\sum\limits_{k=1}^n\arctan(\tau\mu_k)$, whenever $\tau\leqslant \tau_0\ll1$. It remains to show 
       $$\operatorname{ind}(Y_{\tau}X_{\tau}^{-1})=\operatorname{ind}(A_2-A_1).$$
       Indeed, for $0<\tau\leqslant\tau_0(n)\ll1$, as the index is preserved by the congruence operation, we obtain \begin{equation*}
           \begin{aligned}
               \operatorname{ind}(Y_{\tau}X_{\tau}^{-1})&=\operatorname{ind}(X_\tau^T Y_\tau)\quad \quad\quad (\textnormal{congruence by} ~ X_\tau)
               \\&=\operatorname{ind}((A_2-A_1)+\tau^2(A_2-A_1)A_1(I+\tau^2A_1^2)^{-1}(A_2-A_1))\quad \textnormal{by (\ref{eqn:symmetricgraphmatrix})}
               \\&=\operatorname{ind}(\underbrace{(A_2-A_1)^{-1}}_{\textnormal{norm of all eigenvalues}\geqslant C(n)} +\underbrace{\tau^2A_1(I+\tau^2A_1^2)^{-1})}_{\textnormal{matrix norm}\leqslant C\tau^2}\quad (\textnormal{congruence by} ~ (A_2-A_1)^{-1})
                \\&= \operatorname{ind}((A_2-A_1)^{-1})=\operatorname{ind}(A_2-A_1).
           \end{aligned}
       \end{equation*}
The Lemma follows.        
   \end{proof}
   
Returning to global theory, the following theorem asserts the existence of a holomorphic strip of two \textit{compact} Lagrangians isomorphic in $D^bFuk$.
\begin{prop}[\textup{\cite[Theorem 2.15]{IJS}}]\label{prop:holomorphicstrip}
	Let $(M,\omega), \lambda$ be a symplectic Calabi-Yau Liouville manifold of dimension $2n$, and consider the derived Fukaya category $D^bFuk(M)$ of smooth, embedded, compact, exact, graded Lagrangians in $(M,\omega)$ with $\Z_2$ coefficients. Let $L, L'$ be transversely intersecting Lagrangians in $M$ which are isomorphic as objects of $D^bFuk(M)$. Let $J$ be an almost complex structure on $M$ compatible with $\omega$ and convex at infinity.

	Then there exist $p,q\in L\cap L'$ with $\mu_{L,L'}(p)=0$ and $\mu_{L',L}(q)=n$, and a (possibly broken) $J$-holomorphic strip $S$ with boundary in $L\cup L'$ and corners at $p,q$. Moreover, we can require the strip to pass through any prescribed point $r\in L$ (resp. $L'$). The area of the strip is 
	\[
	\int_{S}\omega= (f_L- f_{L'})(q)- (f_{L}- f_{L'})(p),
	\]
	where $f_L, f_{L'}$ denote the Lagrangian potentials.
\end{prop}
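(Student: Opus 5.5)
The plan is to prove this by the standard Floer-theoretic argument in \cite[Section 2]{IJS}, using only exactness, the grading and convexity at infinity. Since $L,L'$ are compact and exact in a Liouville manifold, and $J$ is convex at infinity, $J$-holomorphic strips with boundary on $L\cup L'$ stay in a compact region by the maximum principle. Exactness rules out disc and sphere bubbling, because every non-constant holomorphic disc with boundary on an exact Lagrangian has positive area but zero topological energy. The Floer complexes $CF^*(L,L')$ and $CF^*(L',L)$ over $\Z_2$ are therefore well defined and $\Z$-graded by $\mu_{L,L'}$. Their cohomologies are invariant under isomorphism in $D^bFuk(M)$. Since $L\cong L'$, we get
\[
HF^*(L,L')\cong HF^*(L,L)\cong H^*(L;\Z_2).
\]
This is non-zero in degrees $0$ and $n$, which already shows that intersection points of degree $0$ and of degree $n$ exist. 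The point is to connect such points by a strip.

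To produce the strip, I would use the module action of $H^*(L;\Z_2)$ on $HF^*(L,L')$ given by point-constrained strips. Fix a generic point $r\in L$. Define
\[
\Psi_r:CF^0(L,L')\to CF^n(L,L')
\]
by counting rigid strips from $p$ to $q$ whose boundary component on $L$ passes through $r$. By gluing and compactness this is a chain map. Under the isomorphism $HF^*(L,L')\cong H^*(L)$, which is compatible with the $H^*(L)$-module structure, $\Psi_r$ corresponds to cup product with the point class $[r]$. Cup product with $[r]$ sends the generator $1\in H^0(L)$ to $[r]\ne0$ in $H^n(L)$. Hence $\Psi_r$ is non-zero on cohomology.

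Consequently some strip is counted. It runs from a generator $p$ with $\mu_{L,L'}(p)=0$ to a generator $q$ with $\mu_{L,L'}(q)=n$. Since $\mu_{L',L}=n-\mu_{L,L'}$, this is equivalent, up to the sign convention for which Lagrangian comes first, to $\mu_{L',L}(q)=0$. I would take care to match the convention in the statement by reversing the order of the pair if necessary. For non-generic $r$, or if the chosen $J$ is not regular, I would take a limit of regular data. Gromov compactness, which uses convexity at infinity and the absence of bubbling from exactness, then yields a possibly broken strip through $r$. The same construction with $r\in L'$ gives the symmetric statement.

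The area formula follows from Stokes. Write $\omega=d\lambda$, and note that $\lambda|_L=df_L$ and $\lambda|_{L'}=df_{L'}$. Integrating $\lambda$ over the boundary of the strip, the two boundary arcs telescope to the stated combination of $f_L-f_{L'}$ evaluated at $q$ and at $p$. For a broken strip, the contributions of the intermediate corners cancel. I expect the main obstacle to be the second step: establishing that the point-constrained count realises cup product with the point class, and that it is preserved under the $D^bFuk$ isomorphism, so that it is non-zero. This is where I would rely on the $A_\infty$-module formalism as carried out in \cite{IJS}.
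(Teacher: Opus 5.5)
Your proposal is correct. It follows the standard Floer-theoretic argument behind \cite[Theorem 2.15]{IJS}, which the paper imports without proof: $L\cong L'$ gives $HF^*(L,L')\cong HF^*(L,L)\cong H^*(L;\Z_2)$ as $H^*(L)$-modules, the point class acts nontrivially through point-constrained strips, and Gromov compactness (via exactness and convexity) plus Stokes produce the possibly broken strip and its area.

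The only thing to fix is your remark on conventions. Your dimension count correctly gives $\mu_{L,L'}(q)=n$, i.e.\ $\mu_{L',L}(q)=0$. The printed condition $\mu_{L',L}(q)=n$ would instead force both corners to have $\mu_{L,L'}=0$, and no reordering of the pair achieves that. That condition fails, for example, when $L'$ is a small Morse graph over $L$ with a unique minimum: then $p=q$, the area formula makes the strip constant, and it cannot pass through a generic point. So the printed $\mu_{L',L}(q)=n$ is a typo, not something to match. It is harmless for the paper, which only uses that the corners have degree $0$ or $n$.
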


\subsection{Partial Compactification}
\label{sec:partialcompact}
 We now specialise the partial compactification procedure in Imagi--Joyce--Oliveira dos Santos uniqueness \cite{IJS} to our situation. We emphasise that all the constructions below depend on the small parameter $\delta\ll 1$. As in Proposition \ref{prop:appslag}, we consider two transverse Lagrangian planes $\Pi_\pm^\delta$ as asymptotic planes of the approximate special Lagrangian $L_\delta=\operatorname{graph}(\delta du)$. 
   
   We introduce the following new coordinate system:
    \begin{equation}
        \label{eqn:coorchangepartialcompact}
        x_k:=\operatorname{Re}(e^{i\psi_k}z_k)-\cot(\phi_k)\Im (e^{i\psi_k}z_k),\quad y_k:=\Im (e^{i\psi_k}z_k),~1\leqslant k\leqslant n,
    \end{equation}
    where $\phi_k=\left\{\begin{array}{ll}
      2\psi_k,   &  \psi_k>0, \\
       \pi+2\psi_k,  & \psi_k<0,
    \end{array}\right.\in (0,\pi)$.
    
    Then $\omega=\sum\limits_{k=1}^n dx_k\wedge dy_k$, and $\Pi_-^\delta=\{(x_1,\cdots,x_n,0):x_k\in \R\},~\Pi_+^\delta=\{(0,y_1,\cdots,y_n):y_k\in \R\}$. Regarding $(x_1,\cdots,x_n,y_1,\cdots,y_n)\in \C^n$ as $-\sum\limits_{k=1}^n y_kdx_k\in T^*_{x}\Pi_-^\delta$ and $\sum\limits_{k=1}^n x_kdy_k \in T^*_y\Pi_+^\delta$, we identify $(\C^n,\omega)\cong T^*\Pi_-^\delta\cong T^*\Pi_+^\delta$ as symplectic manifolds. 

    Topologically, we write $S_\pm^n=\Pi_\pm^\delta\cup \{\infty_\pm\}$ for the one-point compactifications of $\Pi_-^\delta,\Pi_+^\delta \cong \R^n$. We define coordinates $(\tilde x_1,\cdots ,\tilde x_n)\in S_{-}^n\setminus\{0\}$ and $(\tilde y_1,\cdots ,\tilde y_n)\in S_{+}^n\setminus\{0\}$ making $S_-^n$ and $S_+^n$ \textit{diffeomorphic} to $S^n$, 
    \begin{equation*}
        \tilde x_k(p)=\left\{\begin{array}{ll}
           0,  &  p=\infty_-\\
           \frac{x_k}{\sqrt{\sum\limits_{j=1}^n x_j^2}\log\left(1+\sum\limits_{j=1}^n x_j^2\right)},  & p=(x_1,\cdots,x_n)\in \Pi_-^\delta\setminus\{0\}
        \end{array}\right.
    \end{equation*}
    and 
    \begin{equation*}
        \tilde y_k(p)=\left\{\begin{array}{ll}
           0,  &  p=\infty_+\\
           \frac{y_k}{\sqrt{\sum\limits_{j=1}^n y_j^2}\log\left(1+\sum\limits_{j=1}^n y_j^2\right)},  & p=(y_1,\cdots,y_n)\in \Pi_+^\delta\setminus\{0\}.
        \end{array}\right.
    \end{equation*}
    Here the logarithmic term ensures that any asymptotic behaviour with negative polynomial rate along the ends of $\Pi_\pm^\delta$ extends smoothly to $\infty_\pm$ respectively.

    We then form the partial compactification \[
M= \C^n \sqcup T^*_{\infty_-} S_-^n\sqcup T^*_{\infty_+} S_+^n,
\]
as the plumbing of the cotangent bundles $T^*S_-^n$ and $T^* S_+^n$ along the common open set $\C^n$. The manifold $M$ inherits the natural symplectic structure $\omega$; however, the Liouville form $\lambda$ on $\C^n$ does not extend to $M$. Instead, we define a family (with respect to the parameter $T\gg 1$) of adapted Liouville forms $\tilde \lambda$ on $\C^n$ as follows.   

Take a cut-off function $\eta:\R\to [-1,1]$, with $|\eta'|=\mathcal{O}(T^{-1})$ such that $$\eta(t)=\left\{\begin{array}{ll}
     -1,& t\leqslant -2T, \\
     0, & -T\leqslant t\leqslant T,\\
     1,
&t\geqslant 2T,\end{array}\right.$$ and 
define a smooth 1-form
$\widetilde{\lambda}$ on $\C^n$ by
$$
\widetilde{\lambda}=\lambda+dh,\quad \lambda= \frac{1}{2} \left(\sum_{k=1}^n x_kdy_k-\sum_{k=1}^n y_kdx_k\right),\quad h= -\frac{1}{2} \eta\left( \sum_{k=1}^n x_k^2-\sum_{k=1}^n y_k^2   \right)\sum_{k=1}^n x_ky_k,
$$
then $d\widetilde{\lambda}=\omega$, and $\widetilde{\lambda}$ agrees with the natural Liouville forms on the cotangent bundles asymptotically, so it extends to a smooth Liouville form on $M$. 

Similarly, we can define  modified versions $\widetilde{J}, \widetilde{\Omega}$ for $J$ and $\Omega$, which extend smoothly to $M$,  so that $(M,\omega)$ is symplectically Calabi-Yau. 
We can arrange $\widetilde{J}=J$ and $\widetilde{\Omega}=\Omega$ when $-T\leqslant \sum\limits_{k=1}^n (x_k^2-y_k^2)\leqslant T$ for $T\gg 1$. 
We choose $\widetilde{J}$ so that $\widetilde{J}(T_{\infty_\pm}S_\pm^n)=T_{\infty_\pm}(T^*_{\infty_\pm}S_\pm^n)$ so that $S_\pm^n, T^*_{\infty_\pm}S_\pm^n$ have constant phase, with 
\begin{equation}
    \label{eqn:compactgrading}
    \theta_{S_\pm^n}=\pm\frac{1}{2}\sum_{k=1}^n\phi_k,\quad  \theta_{ T^*_{\infty_\pm}S_\pm   }= \pm\frac{1}{2}\sum_{k=1}^n\phi_k+\frac{n\pi}{2}.
\end{equation}
We shall consider a \textit{family} of such structures parametrised by $T\gg 1$, where the effect of taking $T\gg 1$ is similar to ``stretching the neck", such that the symplectic Calabi--Yau structure $(M,\omega)$ agrees with the standard structure of $\C^{n}$ on a larger and larger domain $-T\leqslant \sum\limits_{k=1}^n (x_k^2-y_k^2)\leqslant T$. 

Under the above framework, for a connected, closed, exact, graded Lagrangian $L$ asymptotic to $\Pi_\pm^\delta$ with rate $\mu<0$, potential $f_L$, and grading $\theta_L$ normalised by
$$\lim_{L\ni x\to \infty \textnormal{~along }\Pi_-^\delta}\theta_L(x)=-\sum_{k=1}^n\psi_k,$$
the asymptotic rate $\mu<0$ ensures that the following limits exist
$$\lim_{L\ni x\to \infty \textnormal{~along }\Pi_-^\delta} f_L(x)=c_-, ~~\lim_{L\ni x\to \infty \textnormal{~along }\Pi_+^\delta} f_L(x)=c_+,$$
and we then associate $L$ with the analytical invariant $A(L):=c_+-c_-$. 

Its compactification $\bar L=L\cup\{\infty_\pm\}\subset (M,\omega)$ is a smooth compact Lagrangian, intersecting the two cotangent fibres at infinity $T_{\infty_\pm}^*S_\pm$ transversely, with potential $f_{\bar L}$ and grading $\theta_{\bar L}$,
\begin{equation}\label{eqn:potentialcompact}
        f_{\bar L}(p)=\left\{\begin{array}{ll}
           f_L(p)+h(p),  & p\in L, \\
            c_\pm, & p=\infty_\pm,\\
        \end{array}\right.\quad \theta_{\bar L}(\infty_-)=-\sum_{k=1}^n\psi_k,~\theta_{\bar L}(\infty_+)=\lim_{L\ni x\to \infty \textnormal{~along }\Pi_+^\delta}\theta_L.
\end{equation}

\begin{rmk}\label{rmk:potentialinv}
Given \eqref{eqn:potentialcompact}, we observe that the Novikov exponent (potential difference) at (transverse) intersection points inside $\C^n$ of two Lagrangians $ L$ and $L'$
is invariant under the above partial compactification, \ie 
$$(f_{\bar L}-f_{\bar L'})|_{\bar L\cap \bar L'\cap \C^n}=(f_L-f_{L'})|_{L\cap L'}.$$
Moreover, in $\C^n$, the Floer degree also remains the same, as it is intrinsically characterised by the Lagrangian Grassmannian bundle, hence independent of the choice of complex volume form. 
\end{rmk}

Based on the work of Abouzaid and Smith \cite{Abouzaid-Smith}, Imagi--Joyce--Oliveira dos Santos proved the following classification result: 
\begin{thm}{\textup{\cite[Theorem 2.17, Corollary 2.18]{IJS}}} \label{thm:classifyaclag} 
Let 
$$\Pi_0=\R^n,\quad \Pi_\phi=(e^{i\phi_1},\cdots e^{i\phi_n})\R^n,\quad 0<\phi_k<\pi,\quad 1\leqslant k\leqslant n.$$
For a closed, exact, graded Lagrangian $L\subset \C^n$ asymptotic to $\Pi_0\cup \Pi_\phi$ with rate $\mu<0$, with grading normalisation $\lim\limits_{L\ni x\to \infty \textnormal{~along }\Pi_0}\theta_L(x)=0$, we have 
\begin{enumerate}
    \item [\textnormal{(1)}] $H^*(L,\Z_2)\cong H^*(S^{n-1}\times \R,\Z_2)$ and hence $L$ is connected. 
    \item [\textnormal{(2)}] along the end of $L$ asymptotic to $\Pi_\phi$, either $\theta_{L}\to\sum\limits_{k=1}^n\phi_k-\pi$ or $\theta_{L}\to \sum\limits_{k=1}^n\phi_k-(n-1)\pi$. 
    \item [\textnormal{(3)}] the isomorphism class of $\bar L$ in $D^bFuk(T^*S^n\#T^*S^n)$ is determined by the choice of the above asymptote.
\end{enumerate}
\end{thm}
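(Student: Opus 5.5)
The plan is to deduce all three items from the structure of compact exact Lagrangians in the partially compactified manifold. The tool is the Abouzaid--Smith description of the Fukaya category of a plumbing of two cotangent bundles of spheres.

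\textbf{Compactification and topology.} First I apply the partial compactification of Section \ref{sec:partialcompact}, with $\Pi_0,\Pi_\phi$ in place of $\Pi_-^\delta,\Pi_+^\delta$. This produces $M\cong T^*S^n_-\# T^*S^n_+$, the plumbing of two copies of $T^*S^n$ at a single point, together with a compact, exact, graded, embedded Lagrangian $\bar L=L\cup\{\infty_-,\infty_+\}$. The rate $\mu<0$ and the logarithmic coordinates make $\bar L$ smooth at $\infty_\pm$. It meets each cotangent fibre $T^*_{\infty_\pm}S^n_\pm$ transversely in exactly one point, and is disjoint from it elsewhere.

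Abouzaid--Smith show that the zero sections $S_-^n,S_+^n$ split-generate the compact exact part of $D^bFuk(M)$. Using that result, the Floer cohomology $HF(\bar L,T^*_{\infty_\pm}S^n_\pm)$ is of rank one, since it is computed by the single transverse intersection point. The fibres act as dual objects detecting the multiplicities of $S^n_\pm$ in a twisted complex. Hence $\bar L$ is isomorphic to a twisted complex containing one copy of each of $S^n_-$ and $S^n_+$ with some shifts. In addition, $HF(\bar L,\bar L)\cong H^*(\bar L;\Z_2)$, while the endomorphism algebra of such a complex can be computed directly. This forces $H^*(\bar L;\Z_2)\cong H^*(S^n;\Z_2)$; in particular $\bar L$ is connected. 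Removing two points gives $H^*(L;\Z_2)\cong H^*(S^{n-1}\times\R;\Z_2)$, which is item (1).

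\textbf{Identifying the twisted complex.} Next I classify twisted complexes with one copy of each sphere. Since $\hom(S_-,S_+)$ and $\hom(S_+,S_-)$ are each of rank one, concentrated in a single degree, any such complex is either $S_-\oplus S_+$ or one of the two nontrivial cones $\operatorname{Cone}(S_-\to S_+)$, $\operatorname{Cone}(S_+\to S_-)$, up to shift. The cones are realised by the two Lagrangian surgeries of $S_\pm$ at the plumbing point. The split sum is excluded because $HF^*(\bar L,\bar L)$ has rank two, whereas the split sum would give rank four. So $\bar L$ is isomorphic to one of the two cones, with the shift fixed by the normalisation $\theta_L\to 0$ along $\Pi_0$.

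\textbf{Gradings.} I then read off the gradings. The degree of the unique intersection point of $\bar L$ with $T^*_{\infty_-}S^n_-$ is computed from \eqref{eqn:compactgrading} together with $\theta_{\bar L}(\infty_-)$. The same is done at $\infty_+$, where the degree is expressed through $\lim\theta_L$ along $\Pi_\phi$. In each of the two cones the difference between these two degrees is prescribed by the degree of the connecting morphism in $\hom(S_\mp,S_\pm)$, which is $0$ in one direction and $n$ in the other, up to the fixed shift. Solving the degree formula $\mu=\frac1\pi(\sum\alpha_k+\theta-\theta')$ gives $\lim\theta_L=\sum\phi_k-\pi$ or $\sum\phi_k-(n-1)\pi$, which is item (2). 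The computation also shows that the asymptotic angle determines which cone occurs, and hence the isomorphism class, which is item (3).

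\textbf{Main obstacle.} I expect the hardest step to be the first: checking that $\bar L$ really is an object of the Abouzaid--Smith category, with the correct spin and grading assumptions over $\Z_2$, and that the fibres detect multiplicities. The latter requires the Koszul-duality statement that $T^*_{\infty_\pm}$ pair with $S^n_\pm$ in rank one and are orthogonal to the other sphere. I would first try to quote Abouzaid--Smith directly, and then do the cone bookkeeping by hand.
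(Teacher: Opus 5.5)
Your route is the one behind the result the paper cites from Imagi--Joyce--Oliveira dos Santos; the paper itself only quotes it. You partially compactify to $T^*S^n_-\# T^*S^n_+$ and use Abouzaid--Smith, so that the two fibres at $\infty_\pm$, each meeting $\bar L$ once, make $\bar L$ a twisted complex with one copy of each $S^n_\pm$. The closing grading bookkeeping is fine: the two possible connecting morphisms have degrees summing to $n$, which is why the two admissible limits differ by $(n-2)\pi$.

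\textbf{The gap} is in how you exclude the split sum. You derive $H^*(\bar L;\Z_2)\cong H^*(S^n;\Z_2)$ from ``the endomorphism algebra of such a complex''. At that stage the complex may still be $S^n_-[a]\oplus S^n_+[b]$, whose endomorphism algebra has rank $2+2+1+1=6$, not $4$; you dropped the two off-diagonal terms. You then rule out the split sum using the rank-two statement, which was only justified for a cone. The argument is therefore circular, and algebra alone does not close it. Rank $6$ with $H^0$ of rank $2$ only says that $\bar L$ would have two components. This step matters for all three items: for a split sum the shift $b$ is unconstrained, so $\lim\theta_L$ could be any value in $\sum_k\phi_k+\pi\Z$.

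\textbf{What is missing} is a geometric proof of connectedness, placed before the endomorphism computation. One way to do it:
\begin{enumerate}
\item Every component $K$ of $\bar L$ is itself a compact exact graded Lagrangian with $HF(K,K)\cong H^*(K;\Z_2)\neq 0$, so $K$ is a nonzero object.
\item By the same fibre-detection input you already need (the fibres generate, so an object orthogonal to both is zero), $K$ must meet $T^*_{\infty_-}S^n_-$ or $T^*_{\infty_+}S^n_+$.
\item Since $\bar L$ meets these fibres in only two points, there are at most two components.
\item If there were two, say $\bar L_1\ni\infty_-$ and $\bar L_2\ni\infty_+$, their multiplicity vectors would be $(1,0)$ and $(0,1)$. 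Hence $\bar L_1\cong S^n_-[a]$ and $\bar L_2\cong S^n_+[b]$.
\item Then $HF(\bar L_1,\bar L_2)\cong HF(S^n_-,S^n_+)[b-a]\cong\Z_2$, coming from the single intersection at the origin. This contradicts $\bar L_1\cap\bar L_2=\emptyset$.
\end{enumerate}
So $\bar L$ is connected and $HF^0(\bar L,\bar L)\cong H^0(\bar L;\Z_2)\cong\Z_2$. This rules out the split sum, since its degree-zero endomorphisms contain both identities. Only after this does your cone computation give $H^*(\bar L;\Z_2)\cong H^*(S^n;\Z_2)$ and hence item (1), with items (2) and (3) following as you describe.
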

Specialising to our situation, it follows that
\begin{cor}\label{cor:indexv}
    The asymptotic harmonic polynomial $\sum \limits_{k=1}^nM_kx_k^2$ of $u$ has index $1$ or $n-1$. 
\end{cor}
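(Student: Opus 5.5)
The plan is to apply Theorem~\ref{thm:classifyaclag} to $L_\delta=\operatorname{graph}(\delta du)$ for a fixed small $\delta$, after a unitary change of frame, and read off the index from the limiting Lagrangian angle along the second end. By Proposition~\ref{prop:appslag}, $L_\delta$ is a connected, closed, exact, graded, $C^{1,\alpha}$-embedded Lagrangian, asymptotic with rate $\mu<0$ to $\Pi^\delta_-\cup\Pi^\delta_+$, where $\Pi^\delta_\pm=(e^{\pm i\psi_1},\dots,e^{\pm i\psi_n})\R^n$ and $\psi_k=\arctan(2\delta M_k)$. Its grading satisfies $|\theta_{L_\delta}|\leqslant C\delta^{1+\alpha}$ everywhere. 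A $C^{1,\alpha}$ Hamiltonian smoothing near $\Sigma$ does not change the asymptotics or the class in $D^bFuk$, so we may assume $L_\delta$ is smooth.

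First I apply the unitary rotation $U=\operatorname{diag}(e^{i\psi_1},\dots,e^{i\psi_n})$. It sends $\Pi^\delta_-$ to $\Pi_0=\R^n$ and $\Pi^\delta_+$ to $\Pi_\phi'=(e^{2i\psi_1},\dots,e^{2i\psi_n})\R^n$. To match the normalisation $0<\phi_k<\pi$, I replace $2\psi_k$ by $\phi_k=2\psi_k$ when $M_k>0$ and by $\phi_k=\pi+2\psi_k$ when $M_k<0$. This is the same plane, since $e^{i\pi}\R=\R$. Let $p$ be the number of negative $M_k$, which is the index of $V=\sum_k M_kx_k^2$. The rotation shifts the Lagrangian angle by the constant $\sum_k\psi_k$. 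After renormalising so that $\theta\to0$ along the $\Pi_0$-end, the angle along the $\Pi_\phi$-end tends to a limit $\Theta$ with
\[
\Theta=\lim_{+\text{end}}\theta_{L_\delta}-\lim_{-\text{end}}\theta_{L_\delta}=\mathcal{O}(\delta^{1+\alpha}),
\]
by \eqref{eqn:estgrad}. Here I use that the grading is continuous on the connected $L_\delta$, so the limit on each end is computed from the same global branch of the angle.

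Theorem~\ref{thm:classifyaclag}(2) then gives $\Theta\in\{\sum_k\phi_k-\pi,\ \sum_k\phi_k-(n-1)\pi\}$. On the other hand,
\[
\sum_k\phi_k=2\sum_k\psi_k+p\pi=p\pi+\mathcal{O}(\delta),
\]
and the leading term of $2\sum_k\psi_k$ is $4\delta\sum_k M_k=0$ by harmonicity, so in fact $\sum_k\phi_k=p\pi+\mathcal{O}(\delta^3)$. Comparing the two, $p\pi-\pi$ or $p\pi-(n-1)\pi$ must equal $\mathcal{O}(\delta^{1+\alpha})$. For $\delta$ small this forces $p=1$ or $p=n-1$, which is the claim.

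The main obstacle is making sure the grading used in Proposition~\ref{prop:appslag} really is a single continuous grading on all of $L_\delta$ whose limits at the two ends are exactly the values entering Theorem~\ref{thm:classifyaclag}. The two ends are the two sheets over the same region at infinity, joined through the branched region near $\Sigma$. If a jump of $2\pi$ were missed there, the count would be off by a multiple of $\pi$. The proof of Proposition~\ref{prop:appslag} shows the angle extends continuously across $\Sigma$ with value near $0$, and I will invoke exactly this to rule out such a jump. The remaining technical points are also checked. The sign of $\psi_k$ matches the sign of $M_k$ on the $+$ sheet, since $d(\delta V)=2\delta M_kx_k\,dx_k$ gives slope $2\delta M_k$. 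Smoothing near $\Sigma$ to meet the smoothness hypothesis of Theorem~\ref{thm:classifyaclag} is harmless, because the smoothing is small in $C^1$ and leaves the angle estimate intact.
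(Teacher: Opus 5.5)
Your proposal is correct and is essentially the paper's proof: rotate by $U_\delta=\operatorname{diag}(e^{i\psi_1},\dots,e^{i\psi_n})$, smooth near $\Sigma$, apply item (2) of Theorem~\ref{thm:classifyaclag}, and compare $\sum_k\phi_k=p\pi+2\sum_k\psi_k=p\pi+\mathcal{O}(\delta^3)$ with the limiting angle $\mathcal{O}(\delta^{1+\alpha})$ from Proposition~\ref{prop:appslag}. You also check explicitly that the grading of Proposition~\ref{prop:appslag} is a single continuous lift across $\Sigma$, so there is no hidden $2\pi$ jump between the two ends; the paper relies on this implicitly, and you handle it correctly.
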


\begin{proof}
    For $\delta \ll 1$ as in Proposition \ref{prop:appslag}, we use  $U_\delta=\operatorname{diag}(e^{i\psi_1},\cdots,e^{i\psi_n})\in U(n)$ with $|U_\delta-Id|\lesssim \delta,~ \arg \det U_\delta=\mathcal{O}(\delta^3)$ to transfer $L_{\delta du}$ into the Imagi--Joyce--Oliveira dos Santos' setting Theorem \ref{thm:classifyaclag}, thus obtaining a closed exact graded Lagrangian $L$ asymptotic to  $\Pi_0\cup\Pi_\phi$, where 
    $$\phi=(\phi_1,\cdots,\phi_n)\in (0,\pi)^n,\quad \phi_k=\left\{\begin{array}{ll}
         2\arctan (2\delta M_k),& M_k>0, \\
         \pi+2\arctan (2\delta M_k),&  M_k<0,
    \end{array}\right.\quad \theta_{L}=\theta_{L_{\delta du}}+\arg\det U_\delta =\mathcal{O}(\delta^{1+\alpha}).$$ 

    Similar to Proposition \ref{prop:perturb} below, we can use Hamiltonian perturbation supported near the branching locus to smooth $L$ (still denoted as $L$), without changing the asymptotic planes at infinity, thus fitting into the derived Fukaya category of \textit{smooth} Lagrangians. Applying item (2) of Theorem \ref{thm:classifyaclag} to $L$, we obtain that (without loss of generality, we can assume the first alternative holds), 
      
     $$\sum_{k=1}^{n}\phi_k-\pi=\sum\limits_{k=1}^n2\arctan (2\delta M_k)+(m-1)\pi=\lim_{L \ni x\to \infty \operatorname{along} \Pi_\phi}\theta_{L}=\mathcal{O}(\delta^{1+\alpha}),$$
     where $m:=\#\{1\leqslant k\leqslant n:M_k<0\}=\operatorname{ind}V$.
     Since $\sum\limits_{k=1}^nM_k=0$ and $\operatorname{arctan}(x)=x+\mathcal{O}(x^3)$ for $|x|\ll 1$, we conclude $m=1$ by taking $\delta\to 0^+$. Similarly, the second alternative implies $\operatorname{ind}V=n-1$. 
\end{proof}
From now on, we can assume that $V$ has index $1$ and $M_1,\cdots,M_{n-1}>0$ with $M_n=-\sum\limits_{k=1}^{n-1}M_k<0$ (\cf item (4) of Remark \ref{rmk:scaling}).

\subsection{Hamiltonian Perturbation} 

In this section, we establish a Hamiltonian perturbation result for two multivalued graphs defined by two \textit{distinct} non-degenerate $\Z_2$-harmonic $1$-forms. Let $v$ be one of the $\Z_2$-harmonic functions arising from the Lawlor necks, with branch set $\Sigma'$ and \textit{the same asymptotic quadratic form} as $u$ at infinity. The goal is to remove the degree $0$ or $n$ intersections in the region, away from a small neighbourhood of the branching locus and the infinity.

We write $(L_\delta,L_\delta')=( L_{\delta du},L_{\delta dv})$ with partial compactification $(\bar L_\delta,\bar L_\delta')$ inside $(M,\omega)$ (\cf Section \ref{sec:partialcompact}) and let $\rho=d(\cdot,\Sigma)$ and $\rho'=d(\cdot,\Sigma')$ be the distance functions to $\Sigma,\Sigma'$ on $\R^n$ respectively. 

\begin{prop}\label{prop:perturb}
    Suppose that $du,dv$ are distinct non-degenerate $\Z_2$-harmonic $1$-forms. Then there exists $\Lambda_0=\Lambda_0(u,v)\gg 1$ such that for any $\Lambda>\Lambda_0$ and $\delta< \delta_0(u,v,\Lambda)$, one can find a family of smooth Lagrangians $\bar L_{\delta }^{\dagger}\subset M$ as small Hamiltonian perturbations of $\bar L_{\delta}$, parametrised by $\varepsilon\in (0,\varepsilon_0)$ for some $\varepsilon_0=\varepsilon_0(\delta,u,v,\Lambda)\ll 1$, such that for generic $\varepsilon\in (0,\varepsilon_0)$,
    \begin{enumerate}
        \item [\textnormal{(1)}](Global Transversality) The two Lagrangians $\bar L_{\delta}^\dagger$ and $\bar L_{\delta}'$ intersect transversely in $M$; 
        \item [\textnormal{(2)}] (No degree $0$ or $n$ intersections away from the common near-branch region and infinity) Any intersection of $\bar L_{\delta}^\dagger $ and $\bar L_{\delta}'$ within $\pi^{-1}(U_{\Lambda})$ has Floer degree between $1$ and $n-1$, where
        $$U_\Lambda =B_\Lambda(0)\setminus S_\Lambda,\quad S_{\Lambda}:=\{q\in \R^n:\rho(q)\leqslant \Lambda^{-1}\textnormal{~and~}\rho'(q)\leqslant \Lambda^{-1/2},\textnormal{~or~}\rho'(q)\leqslant \Lambda^{-1}\textnormal{~and~}\rho(q)\leqslant \Lambda^{-1/2}\}\subset \R^n.$$
        
        \item [\textnormal{(3)}] The family of Hamiltonian perturbations $\bar L_{\delta}^\dagger$ converges to $\bar L_\delta$ as $\varepsilon\to 0^+$ in the $C^1$-sense, with convergence of the perturbed potentials and gradings in the $C^0$-sense, \ie
        $$|f_{\bar L_{\delta}^\dagger}-f_{\bar L_{\delta}}|\leqslant \Psi(\varepsilon|\delta,u,v,\Lambda),\quad |\theta_{\bar L_{\delta}^\dagger}-\theta_{\bar L_{\delta}}|\leqslant \Psi(\varepsilon|\delta,u,v,\Lambda)$$
        where $\Psi(\varepsilon|C_1,C_2,\cdots)$ denotes a constant depending on $\varepsilon,C_1,C_2,\cdots$ that tends to $0$ when $C_1,\cdots $ are fixed and $\varepsilon \to 0$. 
        
    \end{enumerate}
    Moreover, when $v$ is replaced by $\pm V$, the asymptotic quadratic form of $u$ at infinity, the above conclusion still holds, and item \textnormal{(2)} can be strengthened to the assertion that there are no degree $0$ or $n$ intersection points within $\pi^{-1}(B_\Lambda(0)).$
\end{prop}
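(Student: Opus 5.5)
We perturb by the graph of a small exact $1$-form: set $\bar L_\delta^\dagger:=\operatorname{graph}(\delta\, d(u+\varepsilon w))$, where $w$ is a smooth single-valued function on $\R^n$. We choose $w$ compactly supported in a neighbourhood of $\overline{B_{2\Lambda}(0)}$ and, near $\Sigma$, pull it back to the double cover so the perturbation is $C^{1,\alpha}$-compatible with the branching structure. Such perturbations of the multivalued graph are Hamiltonian: on the double cover they correspond to the exact $1$-form $\delta\varepsilon\, d(p^*w)$. The potential changes by $\mathcal{O}(\delta\varepsilon)$ via \eqref{eqn:Lagpotential}, and the grading changes by $\mathcal{O}(\delta\varepsilon|\nabla^2 w|)$. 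This gives item (3). Near $\Sigma$ we also smooth the $C^{1,\alpha}$ graph by a $C^1$-small Hamiltonian isotopy localised in $\rho\leqslant\varepsilon$. Global transversality (1) follows from a standard Sard/parametric-transversality argument over the finite-dimensional family $\{\varepsilon w_j\}$. Near infinity, both graphs share the asymptotic planes $\Pi^\delta_\pm$ and meet $T^*_{\infty_\pm}S^n_\pm$ transversely. The intersections there come from the difference $u-v=\mathcal{O}(|x|^\mu)$, which we leave untouched since $w$ has compact support; for generic $\varepsilon$ these points remain transverse.

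The heart is item (2). Intersections of $L_\delta$ and $L'_\delta$ over $q\in\R^n$ are exactly the points where some branch of $du$ equals some branch of $dv$, i.e.\ the critical points of a local difference $F=u_{\pm}-v_{\pm}$. Over $U_\Lambda$ we have either $\rho\gtrsim\Lambda^{-1}$ and $\rho'\gtrsim\Lambda^{-1}$, or one of them $\gtrsim\Lambda^{-1/2}$. In the first case both $u$ and $v$ are locally single-valued there, with $|\nabla^2u|,|\nabla^2v|\lesssim\Lambda^{1/2}$. After rescaling, Lemma~\ref{lem:floerdegreeforgraph} applies with $\tau\sim\delta\Lambda^{1/2}\leqslant\tau_0$ once $\delta<\delta_0(\Lambda)$. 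Hence at a nondegenerate intersection the Floer degree equals $\operatorname{ind}\nabla^2 F$. Since $F$ is harmonic and non-constant ($du\neq dv$, by unique continuation on the connected double cover), $\nabla^2F$ is trace-free, so it is never definite. A nondegenerate critical point therefore has index in $[1,n-1]$. Degenerate critical points arise only from $\nabla^2F=0$ or a singular Hessian. Adding $\varepsilon w$ with $w$ a generic quadratic-plus-linear function, cut off outside $B_{2\Lambda}$, makes all critical points of $F+\varepsilon w$ in $U_\Lambda$ Morse. However, $F+\varepsilon w$ is no longer harmonic, and this is the main obstacle. I would handle it with the IJS real-analyticity trick adapted to harmonic functions. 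The critical set of $dF$ in the compact region $\overline{U_\Lambda}$ is a real-analytic set, so it has finitely many components. If we choose $w$ harmonic, i.e.\ a generic \emph{linear} function (harmonic polynomial) times a cutoff that equals $1$ on $B_{\Lambda}$, then $F+\varepsilon w$ stays harmonic on $U_\Lambda$. Moreover, by Sard applied to the map $x\mapsto -dF(x)$, for generic small $\varepsilon\ell$ the harmonic function $F+\varepsilon\ell$ is Morse on $U_\Lambda$ and hence has no index $0$ or $n$ critical points there. The cutoff region $B_{2\Lambda}\setminus B_\Lambda$ is handled by the far-field estimate \eqref{eqn:higher-order derivative estimates}. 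There $\nabla(u-v)$ is of size $|x|^{\mu-1}$, possibly with no zeros, and we require $\Lambda_0$ large so that $|x|^{\mu-1}$ dominates the $\varepsilon$-perturbation, or we simply allow those points since (2) only concerns $B_\Lambda$.

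In the mixed regime, say $\rho\leqslant\Lambda^{-1}$ but $\rho'\geqslant\Lambda^{-1/2}$, the branch of $dv$ is smooth with $|\nabla^2 v|\lesssim\Lambda^{1/4}$, while $du=\mathcal{O}(\rho^{1/2})$. Here we apply the same computation to $F=u-v$ on the double cover of $\Sigma$. Near $\Sigma$ the Hessian of $u$ has the form $N+S$ from \eqref{eqn:Hessunearbranch}, with eigenvalues $\pm\tfrac34\rho^{-1/2}|\mathbf B|$ dominating. The graph of $u$ is then nearly vertical in the $z$-directions, and a direct adaptation of Lemma~\ref{lem:floerdegreeforgraph} (rotate by $U_1$ for $L'_\delta$ and write $L_\delta$ as a graph) shows that the relative tangent configuration has one angle near $0$ and one near $\pi$ in the normal plane. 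This forces degree in $[1,n-1]$. The only non-generic alternative, that the remaining $(n-2)$-block is definite, is excluded because the Lagrangian angles of both graphs are $\mathcal{O}(\delta^{1+\alpha})$ by Proposition~\ref{prop:appslag}, giving $\mu\approx\frac1\pi\sum\alpha_k$ with $\alpha_k$ in the normal block contributing exactly $1$. For the final statement with $v=\pm V$, the set $\Sigma'$ is empty, so $S_\Lambda$ only involves a neighbourhood of $\Sigma$. The mixed argument then covers the region $\rho\leqslant\Lambda^{-1}$ as well, and the first-case argument applies on the rest of $B_\Lambda$; hence no degree $0$ or $n$ points occur over $B_\Lambda(0)$.
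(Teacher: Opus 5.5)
Your argument for item (2) is correct in substance. Away from both branch sets it takes a genuinely different and simpler route than the paper.

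\textbf{Region away from the branch sets.} The paper perturbs by a general $\varepsilon\delta\,dH$ with $\operatorname{crit}H\cap K_\Lambda=\emptyset$, so the Hessian of $u-v\pm\varepsilon H$ is no longer trace-free. It must then show $|\operatorname{tr}\operatorname{Hess}|<\frac12|\operatorname{Hess}|$ at intersection points. Near $K_\Lambda$ this uses the real-analytic Taylor estimate $|\nabla(u-v)|\leqslant C_1|q-q^\dagger|\,|\nabla^2(u-v)|$; elsewhere it uses separate continuity and covering arguments (Cases (a), (b), (d)).

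You instead take $H$ linear on $B_\Lambda$, so over $B_\Lambda$ the perturbation is a fibre translation by $\varepsilon\delta c$. Then:
\begin{itemize}
\item the local functions $\pm u\pm v$ stay harmonic, so their Hessians are exactly trace-free;
\item Sard applied to $d(u\pm v)$ on finitely many balls covering $\overline{U'_\Lambda}$ gives non-degeneracy for generic $c$;
\item Lemma~\ref{lem:floerdegreeforgraph} with $\tau\sim\delta\Lambda^{1/2}$ gives degree in $[1,n-1]$.
\end{itemize}
This is the case $\nabla^2H=0$ of the paper's condition, and it makes real analyticity unnecessary; your mention of it is superfluous.

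It works because item (2) only concerns $B_\Lambda$, so degree $0$ or $n$ points in the cutoff annulus are allowed. Your alternative claim that $|x|^{\mu-1}$ dominates the perturbation there is unfounded, since that is only an upper bound. To get generic $\varepsilon$ with $\ell$ fixed, choose the direction $c$ generically (Fubini in polar coordinates), since a null set of critical values can contain a ray.

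\textbf{Near the branch sets.} For two $\Z_2$-harmonic functions the paper simply shows there are no intersections in the mixed regime (Claim (c)): there $|\delta du|\leqslant C\delta\Lambda^{-1/2}$ while $|\delta dv|\geqslant c\delta\Lambda^{-1/4}$. For $v=\pm V$ it only needs one characteristic angle at distance $\geqslant c\delta\Lambda^{1/2}$ from $\{0,\pi\}$ (Claim (e')).

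Your stronger claim, that the normal block contributes approximately $\pi$ to $\sum\alpha_k$, is true but not justified as written.
\begin{itemize}
\item ``One angle near $0$, one near $\pi$'' holds only where $\delta\rho^{-1/2}\ll1$; closer to $\Sigma$ both angles are near $\pi/2$.
\item The graph description of Lemma~\ref{lem:floerdegreeforgraph} fails once $\delta^2|\nabla^2u|\,|\nabla^2v|\gtrsim1$.
\end{itemize}
What you need is that $\arctan(\frac34\delta\rho^{-1/2}|{\bf B}|)\geqslant c\delta\Lambda^{1/2}$ dominates the $\mathcal O(\delta\Lambda^{1/4})$ error, so that no angle crosses $0\equiv\pi$. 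This follows from Lipschitz dependence of the eigenvalues $e^{2i\alpha_k}$ of the unitary matrix comparing the two planes. Your remark about a definite $(n-2)$-block is confused: a definite block would give degree $1$ or $n-1$, which is allowed.

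\textbf{Item (1) at infinity.} Your treatment here is wrong as stated. With $w$ compactly supported, varying $\varepsilon$ changes nothing outside $B_{2\Lambda}$, so it cannot create transversality there. In particular, $\bar L_\delta$ and $\bar L'_\delta$ both contain $\infty_\pm$. Since they have the same asymptotic planes and decaying ends, both are tangent to $S^n_\pm$ at $\infty_\pm$. You need an additional small generic Hamiltonian perturbation supported near infinity, as in \cite{IJS}; it does not affect items (2) or (3).
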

\begin{proof}
   \textbf{Division of regions and strategy.} We concentrate on item (2), as items (1) and (3) can be achieved by generic arbitrarily small Hamiltonian perturbations.  Let $U_\Lambda'$ be the complement of the $\Lambda^{-1}$-neighbourhood of $\Sigma\cup \Sigma'$, \ie
   $$U_{\Lambda}'=B_\Lambda(0)\cap \{q:\rho(q)> \Lambda^{-1}~\textnormal{and}~\rho'(q)> \Lambda^{-1}\}\subset \R^n.$$ For $p^\dagger \in  L_\delta^{\dagger}\cap  L'_\delta\cap \pi^{-1}(U_\Lambda)$, we divide the proof of $\mu_{ L_\delta^{\dagger},  L'_\delta}(p) \in \{ 1,\cdots,n-1\}$ into four cases with different treatments as follows: 
   
   \begin{itemize}
       \item [(a)] $p^\dagger$ is close to a point $p\in L_{\delta}\cap  L_{\delta}'\cap \pi^{-1}(U_\Lambda')$, with $T_pL_{\delta}=T_pL_{\delta}'$, \ie on $\R^n$, $q^{\dagger}:=\pi(p^\dagger)$ is close to a point $q:=\pi(p)$, away from both branch sets by distance $\Lambda^{-1}$, with $du(q)=dv(q)$ and $\nabla^2u(q)=\nabla^2v(q)$. 
       
       -- Near $q$, we shall perform a real analyticity argument (similar to Case (C) in \cite[Lemma 4.19]{IJS}) to make the perturbed Hessian difference $u-v\pm \varepsilon H$ have index $1,\cdots,n-1$, for generic $H$ defined in a neighbourhood of $q$, \textit{but with a non-vanishing gradient at $q$} (\cf \eqref{eqn:genericityonh}).  

\item [(b)] $p^\dagger$ is close to a point $p\in L_{\delta}\cap  L_{\delta}'\cap \pi^{-1}(U_\Lambda')$, with $T_pL_{\delta}\neq T_pL_{\delta}'$, \ie on $\R^n$, $q^{\dagger}:=\pi(p^\dagger)$ is close to a point $q=\pi(p)$, away from both branch sets by distance $\Lambda^{-1}$, with $du(q)=dv(q)$ but $\nabla^2u(q)\neq \nabla^2v(q)$. 
       
       -- This follows from the linear algebra fact comparing the Floer degree and the Hessian differences (\cf Lemma \ref{lem:floerdegreeforgraph}). 
    
        \item [(c)] $p^\dagger$ is close to a point $p\in L_{\delta}\cap  L_{\delta}'\cap \pi^{-1}(U_\Lambda\setminus U_{\Lambda}')$, \ie $q^{\dagger}:=\pi (p^\dagger)$ is close to a point $q:=\pi(p)$ away from the branch set $\Sigma$ (resp. $\Sigma')$ by distance $\Lambda^{-1/2}$, but in the $\Lambda^{-1}$-neighbourhood of another branch set $\Sigma'$ (resp. $\Sigma$), such that $du(q)=dv(q)$.

       -- Using the asymptote near the branch set, we can rule out this possibility. 
       
       \item [(d)] $p^\dagger $ is not close to $L_\delta\cap L_\delta'$. 
       
       -- This will not happen, as long as $0<\varepsilon\leqslant \varepsilon_0\ll 1$. 
       
   \end{itemize}

\textbf{Case (a): Real Analyticity.} 
   
    \textbf{Claim (a)}. Let $K_\Lambda:=\{q\in \bar U_{\Lambda}':du(q)=dv(q),\nabla^2u(q)=\nabla^2v(q)\}\subset \R^n$.  We choose smooth $H:U_{2\Lambda}'\to \R$ satisfying
    \begin{equation}\label{eqn:genericityonh}
        \operatorname{crit}H\cap K_{\Lambda}=\emptyset,
    \end{equation}
   to define a family of Hamiltonian perturbations on  $L_{\delta}^\dagger:=L_{\delta du+\varepsilon \delta dH}$ over $U_{\Lambda}'$. Then for generic $H$ (to achieve transversality), there exists $\delta_a=\delta(u,v,\Lambda,H)$, with $\varepsilon_a=\varepsilon_a(u,v,\delta,\Lambda,H)$, such that the item (2) in Proposition \ref{prop:perturb} holds over a sufficiently small neighbourhood $\bigcup\limits_{q\in K_\Lambda}B_{r_q}(q)$ of $K_\Lambda$. 
    
    To avoid confusion, we remark that 
    \begin{itemize}
        \item Recall that $\pi:L_\delta\to \R^n$ is a double branch cover, as we are away from the branching locus $\Sigma$, $\pi^*H$ is well-defined on $\pi^{-1}(U_{2\Lambda}')\cap L_\delta$. Such Hamiltonian perturbations $\varepsilon\delta dH=\varepsilon\delta d(\pi^*H)$ will generally break up the $\Z_2$-symmetry of $L_\delta$.  
        \item All the analyses developed below are local, meaning that working on a contractible domain contained in $\R^n\setminus\Sigma$, $u,v$ are represented by single branches $\pm u,\pm v$ respectively.   
        \item Note that the extra condition \eqref{eqn:genericityonh} is generic, as $U_\Lambda' \setminus K_{\Lambda} $ is open and dense (by the unique continuation property for $du$ and $dv$). 
    \end{itemize}

   \begin{proof}[Proof of Claim (a)]
        Around any point $q\in K_\Lambda$, we can take a local single branch of $u$ and $v$ denoted as $\pm u,\pm v$, such that $du(q)=dv(q)$ and $\nabla^2u(q)=\nabla^2v(q)$. The real-analytic Taylor theorem \cite[Theorem A.1]{IJS} (for $\nabla(u-v)$) implies for sufficiently small $r_q$, $$| \nabla (u-v)|\leqslant C_1 |q-q^{\dagger}|| \nabla^2 (u-v)|,\quad \forall q^{\dagger}\in B_{r_q}(q).$$
    
    Condition \eqref{eqn:genericityonh} implies $\nabla H(q)\neq 0$, and hence there exists $C_2>0$ such that
    $$|\nabla^2 H|\leqslant C_2|\nabla H|,\quad \forall q^\dagger\in B_{r_q}(q),$$
    shrinking $r_q$ if necessary. 

    Now suppose there is an intersection point $p^{\dagger}\in L_\delta^\dagger\cap L_\delta'\cap \pi^{-1}(\bigcup\limits_{q\in K_\Lambda}B_{r_q}(q))$. By construction,  $q^{\dagger}:=\pi(p^\dagger)\in B_{r_q}(q)$ for some $q\in K_\Lambda$. Now we aim to show that the Hessian matrix of $u-v\pm \varepsilon H$ at $q^\dagger$ is indefinite, where the non-degeneracy is achieved by the generic choice of $H$.  
    At $q^\dagger$, we have $\nabla(u-v)(q^\dagger)=\pm \varepsilon\nabla H(q^\dagger)$. Then
   \begin{equation*}
       \begin{aligned}
           \left|\operatorname{tr}\operatorname{Hess}(u-v\pm \varepsilon H)(q^\dagger)\right|&=\varepsilon \left|\operatorname{tr}\operatorname{Hess}H(q^\dagger)\right|\leqslant \varepsilon\sqrt{n}|\nabla^2H(q^\dagger)|\leqslant \sqrt{n} C_2|\nabla (\varepsilon H)(q^\dagger)|\\&=\sqrt{n}C_2|\nabla (u-v)(q^\dagger)|\leqslant \sqrt{n}C_1 C_2||q-q^{\dagger}||\nabla^2(u-v)(q^\dagger)|.
       \end{aligned}
   \end{equation*}
   On the other hand, 
   \begin{equation*}
       \begin{aligned}
           |\nabla^2(u-v\pm \varepsilon H)(q^\dagger)|&\geqslant |\nabla^2(u-v)(q^\dagger)|-\varepsilon|\nabla^2H(q^\dagger)|
           \\&\geqslant |\nabla^2(u-v)(q^\dagger)|-C_2|\nabla (\varepsilon H)(q^\dagger)|
           \\&\geqslant |\nabla^2(u-v)(q^\dagger)|-C_2|\nabla (u-v)(q^\dagger)|
           \\&\geqslant (1-C_1C_2|q-q^{\dagger}|)|\nabla^2(u-v)|(q^\dagger)|. 
       \end{aligned}
   \end{equation*}
   Once $C_1C_2|q-q^{\dagger}|\ll 1$ (true for sufficiently small $r_q$), we conclude that
   $$|\operatorname{tr}\operatorname{Hess}(u-v\pm \varepsilon H)(q^\dagger)|<\frac{1}{2}|\operatorname{Hess}(u-v\pm \varepsilon H)(q^\dagger)|,$$
   violating the standard algebraic inequality
   \begin{equation}
       \label{eqn:eigenvalueineq}
        \sum_{k=1}^n\lambda_k^2\leqslant \left(\sum_{k=1}^n\lambda_k\right)^2,\quad  \lambda_k>0,~1\leqslant k\leqslant n.
   \end{equation}
    Therefore, eigenvalues of $\operatorname{Hess}(u-v\pm \varepsilon H)(q^\dagger)$ cannot be all positive or all negative. Since points in $K_{\Lambda}$ are away from the branch sets by distance at least $\Lambda^{-1}$, the Hessians are bounded by $\Lambda^{1/2}$. The claim follows from Lemma \ref{lem:floerdegreeforgraph} and a covering argument, as $K_{\Lambda}$ is compact. 
\end{proof}
\textbf{Case (b): Generic perturbations will do. }

\textbf{Claim (b):} Let $E_{\Lambda}=\{q\in \bar U_\Lambda':du(q)=dv(q),\nabla^2u(q)\neq \nabla^2v(q)\}$. Then the Hamiltonian perturbation in Case (a), with the further requirement that $\delta\leqslant \delta_b,\varepsilon\leqslant \varepsilon_b$, will also guarantee that there are no degree $0$ or $n$ intersections in a $r_q$-ball centred at any point $q\in E_\Lambda$. 
\begin{proof}[Proof of Claim (b)]
As in the description of Case (a), we have
$$\left|\operatorname{tr}\operatorname{Hess}(u-v\pm \varepsilon H)(q^\dagger)\right|=\varepsilon\left|\operatorname{tr}\operatorname{Hess}H(q^\dagger)\right|\leqslant \varepsilon \sqrt{n}|\nabla^2H(q^\dagger)|,$$
and $$|\nabla^2(u-v\pm \varepsilon H)(q^\dagger)|\geqslant |\nabla^2(u-v)(q^\dagger)|-\varepsilon |\nabla^2H(q^\dagger)|.$$ 
Taking $r_q\ll1$ and then $\varepsilon\ll1$, such that $$|\nabla^2(u-v)|\geqslant \frac{1}{2}|\nabla^2(u-v)(q)|\geqslant2\sqrt{n}\varepsilon\sup\limits_{B_{r_q}(q)}|\nabla^2H|,\quad \textnormal{in}~~B_{r_q}(q),$$
we then conclude that 
$$|\operatorname{tr}\operatorname{Hess}(u-v\pm \varepsilon H)(q^\dagger)|<\frac{1}{2}|\operatorname{Hess}(u-v\pm \varepsilon H)(q^\dagger)|.$$
The rest is the same as Case (a), \ie taking $\delta\ll 1$ and applying Lemma \ref{lem:floerdegreeforgraph} with a covering argument (after removing a neighbourhood of $K_\Lambda$ in case (a)). 
\end{proof}
 \textbf{Case (c): No intersections when approaching a branch set but away from another.}

\textbf{Claim (c):} We have $L_\delta \cap L_{\delta}'\cap \pi^{-1}(U_\Lambda\setminus U_{\Lambda}')=\emptyset$, once $\Lambda$ is taken to satisfy $\Lambda\geqslant \Lambda_0\gg 1$. 

 \begin{proof}[Proof of Claim (c)]  
    Suppose that there exists $q\in \R^n$ such that $\rho(q)\leqslant \Lambda^{-1}$ but $\rho'(q)\geqslant \Lambda^{-1/2}$ (similarly for the dual case). Taking $\Lambda^{-1/2}\ll \frac{1}{4}\min \{\operatorname{inj}_\Sigma,\operatorname{inj}_{\Sigma'}\}$, by the structure Theorem \ref{thm:structureofz2}, we find 
    $$|\delta du|(q)\leqslant C(u)\delta\rho(q)^{1/2}\leqslant C(u)\delta\Lambda^{-1/2}.$$
    
    On the other hand, since $|dv|^{-1}(0)=\Sigma'$ and $|dv|$ has linear growth at infinity, we know by the structure Theorem \ref{thm:structureofz2} again, 
    $$|\delta dv|(q)\geqslant C(v)\delta\min \{\rho'^{1/2},1\}\geqslant C(v)\delta\Lambda^{-1/4},$$
    for some small positive number $C(v)$. Thus, $L_{\delta}\cap L'_{\delta}\cap \pi^{-1}(U_{\Lambda}\setminus U_{\Lambda}')=\emptyset$ as $\Lambda\gg 1$, which then reduces to Case (d). 
    \end{proof}

\textbf{Case (d): Continuity Argument.}
    
    \textbf{Claim (d).} For each $\delta\ll 1$ (to satisfy the requirements for Cases (a)--(c)) and $p\in \C^n\setminus (L_\delta\cap L_\delta')$, there exist $\varepsilon_p>0$ and an open neighbourhood $U_p\subset \C^n$ such that 
    $L_\delta^{\dagger}\cap L_\delta'\cap U_p=\emptyset$ for $\varepsilon<\varepsilon_p$. 
    \begin{proof}[Proof of Claim (d)] For $p\in \C^n\setminus (L_\delta\cap L_\delta')$ with further $p\notin L_\delta'$, we 
    can take a small $U_p\subset \C^n\setminus  L_\delta'$ with compact closure $\bar U_p\subset \C^n\setminus L_{\delta}'$, then
    $$L_\delta^{\dagger}\cap L_{\delta}'\cap \bar U_p\subset L_\delta'\cap \bar U_p=\emptyset.$$

   For $p\in L_\delta'\setminus L_\delta$, we know $d(p,L_\delta)>0$ as $L_\delta$ is closed. The existence of the required $U_p,\varepsilon_p$ follows from the  continuity of the Hamiltonian perturbation family $L_\delta^\dagger$. 
    \end{proof}
    Now we are able to prove Proposition \ref{prop:perturb}. Inside $(M,\omega)$, for each $\delta\leqslant \delta_0'(u,v)$ (in Proposition \ref{prop:appslag}), we take a family of Hamiltonian perturbations $L_\delta^{\dagger}$ of $L_\delta$ parametrised by $\varepsilon\in (0,\varepsilon_0')$, such that the following conditions are achieved:
    \begin{itemize}
        \item For generic $\varepsilon$, we have global transversality. 
        \item (\cf Claim (a)) The perturbation in the intermediate region $U_{2\Lambda}$ is described by a Hamiltonian perturbation pulled back from the base, \ie $H:U_{2\Lambda}\to \R$, such that the extra generic condition \eqref{eqn:genericityonh} holds. 
        \item Since $L_\delta$ is smooth away from the branch locus and is of class $C^{1,\alpha}$ around the branch set, we can arrange $\bar L_{\delta}^\dagger$ to be a \textit{smooth} family and to converge to $\bar L_\delta$ in $C^1$, with potential and grading convergence in $C^0$.   
    \end{itemize}
Now item (2) follows from a standard covering argument as follows. 

Write $ B= U_{\Lambda}\times B_R(0)\subset \C^n$, for $R\gg 1$, such that $L_\delta\cap {\pi^{-1}(U_{\Lambda})},L_{\delta}'\cap {\pi^{-1}(U_{\Lambda})}\subset B$. Since $\bar B\setminus \pi^{-1}(\bigcup\limits_{q\in K_\Lambda}B_{r_q}(q))$ is compact, a point $p$ in this compact set either satisfies
$p\in L_\delta \cap L_\delta'\cap \pi^{-1}(U_{\Lambda})$ and hence $p\in L_\delta \cap L_\delta'\cap \pi^{-1}(U_{\Lambda}')$ by Claim (c), or $p \in \C^n \setminus (L_\delta \cap L_\delta')$. Then by Claims (b) and (d), we can find finitely many small neighbourhoods $U_{p_1},\cdots,U_{p_N}$ with $\delta_{p_i}>0,\varepsilon_{p_i}>0$ such that, for $\delta<\delta_0:=\min\{\delta_a,\delta_{p_1},\cdots,\delta_{p_N}\}$ and $\varepsilon<\varepsilon_0=\min\{\varepsilon_a,\varepsilon_{p_1},\cdots,\varepsilon_{p_N}\}$,
any intersection point $p^\dagger \in L_\delta^\dagger\cap L_\delta'\cap B$ has Floer degree $\mu_{L_\delta^\dagger, L_\delta'}(p^\dagger)\in \{1,\cdots,n-1\}$. 

Since $L^{\dagger}_\delta$ and $L_\delta'$ intersect transversely in $\bar B$, they meet at finitely many points, say $p_1^{\dagger},\cdots,p_m^{\dagger}$. If $p_k^{\dagger}\in \pi^{-1}(\bigcup\limits_{q\in K_\Lambda}B_{r_q}(q))$, then it follows from Claim (a) that $\mu_{L_\delta^\dagger, L_\delta'}(p^\dagger_k)\in \{1,\cdots,n-1\}$. Otherwise, $p_k^{\dagger}\in U_{p_j}$ for some $1\leqslant j\leqslant N$. If $p_j$ belongs to Case (b), we conclude that $\mu_{L_\delta^\dagger, L_\delta'}(p^\dagger_k)\in \{1,\cdots,n-1\}$. Otherwise, $p_j$ lies in Case (d), and we derive a contradiction. This completes the proof of Proposition \ref{prop:perturb} for the non-degenerate $\Z_2$-harmonic function $v$. 

Finally, we deal with the limiting case  $L_{\delta}'=\Pi_+^\delta\cup \Pi_-^\delta$ with $\Sigma'=\emptyset$, \ie replacing the $\Z_2$-harmonic function $v$ with the asymptotic harmonic polynomial $\pm V$. Since $|\delta\nabla^2 V|\lesssim \delta$ on $\R^n$, the discussion above for the region $\rho\geqslant \Lambda^{-1}$ extends smoothly to this situation. The remaining case is

\textbf{Case (e)}. The intersection point $p^\dagger\in L_\delta^\dagger\cap L_\delta' \cap \pi^{-1}(\rho\leqslant \Lambda^{-1})$ is either close to a point $p\in L_\delta\cap L_\delta'\cap \pi^{-1}(\rho <2\Lambda^{-1})$ or not close to $L_\delta \cap L_\delta'$. 

\textbf{Claim (e)}: Choose a family of generic Hamiltonian perturbations $L_\delta^\dagger$ of $L_\delta$ as above, with the further generic condition $0\notin L_\delta^\dagger$. Then, after taking $\Lambda>\Lambda_0\gg 1$, $\delta \ll 1$, and $\varepsilon \ll1$ sequentially, any intersection point $p^\dagger \in L_\delta^{\dagger}\cap L_\delta' \cap \pi^{-1}(\{\rho<2\Lambda^{-1}\})$ has Floer degree $\mu_{L_\delta^{\dagger}, L_\delta'}(p^\dagger)\in \{1,\cdots,n-1\}$.  

The observation is that, near the branch set $\Sigma$, $TL_\delta$ contains a 2-dimensional subspace, almost vertical relative to $\R^n$ (\cf \ref{eqn:Hessunearbranch}). So in the Floer degree formula, all characteristic angles $\alpha_1,\cdots,\alpha_n$ cannot be very small simultaneously.  

\textbf{Claim (e'):} For sufficiently small $\delta\ll \Lambda^{-1/2}$, any intersection point $p\in L_\delta\cap L_\delta'\cap \pi^{-1}(\rho <2\Lambda^{-1})$, there exists a vector $0\neq W\in T_pL_\delta$ such that the angle $\theta (W,W')$ between $W$ and any non-zero vector $W'\in T_pL_{\delta}'$ lies in $[c\delta\Lambda^{1/2},\pi -c\delta\Lambda^{1/2} ]\subset (0,\pi)$, for a small number $c>0$ \textit{independent of} $\delta,\Lambda$.   

Let $q=\pi(p)$. Then $du(q)=dv(q)$. As $|du|^{-1}(0)=\Sigma$ and $|dv|^{-1}(0)=\{0\}$, either $q=0\in \Sigma$ or $q\notin \Sigma\cup \{0\}$. In the former case, one takes $W$ to belong to the $2$-dimensional subspace of $T_pL_\delta$ vertical to the base $\R^n$. Then the induced angle is approximately $\frac{\pi}{2}$ up to an error of order $\mathcal{O}(\delta)$. 

In the latter case, for $\Lambda^{-1}\ll 1$, we know that $q$ lies in a local Fermi coordinate system centred at a point on $\Sigma$. According to the local asymptotic expansion of $\nabla^2u$ (\cf \ref{eqn:Hessunearbranch}), we take $W=(I+\sqrt{-1}\delta \nabla^2u)w$ where $w\in \R^n$ with $|w|=1$ is an eigenvector of $N$ with eigenvalue $\frac{3}{4}\rho^{-1/2}|{\bf{B}}|$.  

Then the induced angle $\theta(W,W')\in [0,\pi]$ between $W$ and any vector $W'=(I+\sqrt{-1}\delta \nabla^2v)(w')\in T_pL_{\delta'}=(I+\sqrt{-1}\delta \nabla^2v)\R^n$ for a unit vector $w'\in \R^n$ is computed by the cosine formula:
\begin{equation*}
    \begin{aligned}
        |\cos\theta(W,W')|&=\frac{|\langle W,W'\rangle_{\C^n}|}{|W||W'|}=\frac{|\langle w,w'\rangle_{\R^n}+\langle \delta\nabla^2u(w), \delta\nabla^2v(w')\rangle|}{\sqrt{1+\delta^2|\nabla^2 u(w)|^2}\sqrt{1+\delta^2|\nabla^2v(w')|^2}}\\&\leqslant \frac{1+|\delta\nabla^2u(w)||\delta\nabla^2v(w')|}{\sqrt{1+\delta^2|\nabla^2 u(w)|^2}\sqrt{1+\delta^2|\nabla^2v(w')|^2}}\\&=\cos(\arctan(|\delta\nabla^2u(w)|)-\arctan (|\delta \nabla^2v(w')|)).
    \end{aligned}
\end{equation*}
Thus, using \eqref{eqn:Hessunearbranch} and taking $\delta \ll \Lambda^{-1/2}$, we find

\begin{equation*}
    \begin{aligned}
        \min \{|\theta(W,W')|,|\pi-\theta(W,W')|\}&\geqslant \arctan(|\delta\nabla^2u(w)|)-\arctan (|\delta \nabla^2v(w')|)
        \\& \geqslant \arctan\left(\delta \left(\frac{3}{4}\rho^{-1/2}|{\bf{B}}|+\mathcal{O}(\rho^{\alpha})\right)\right)-\arctan(C\delta) 
        \\& \geqslant c\delta \Lambda^{1/2}, 
    \end{aligned}
\end{equation*}
where $c=c(u,v)>0$ depends on the non-degeneracy of $u$ and the uniform Hessian bound for $|\nabla^2 v|$. Claim (e') follows. 
\begin{proof}[Proof of Claim (e)]
   Let $p^\dagger\in L_\delta^\dagger\cap L_\delta'\cap \pi^{-1}(\rho\leqslant \Lambda^{-1})$. Then, up to a very similar covering argument, we can assume $p^\dagger$ lies in a very small neighbourhood of $p\in L_\delta\cap L_\delta'\cap \pi^{-1}(\rho <2\Lambda^{-1})$ (otherwise $p$ lies in a non-intersection neighbourhood, and we are done by Case (d)). We identify $T_{p^\dagger}L_\delta^\dagger\cong \R^n$ and $T_{p^{\dagger}}L_{\delta}'\cong (e^{i\alpha_1},\cdots,e^{i\alpha_n})\R^n$ with $\alpha_1,\cdots,\alpha_n\in (0,\pi)$. As the Hamiltonian perturbation can be made arbitrarily small in $C^1$-norm, Claim (e') implies that at least one $\alpha_k$ ($1\leqslant k\leqslant n$) satisfies
    $$\min\{|\alpha_k|,|\pi-\alpha_k|\}\geqslant c\delta \Lambda^{1/2}+\Psi(\varepsilon|u,v,\delta,\Lambda).$$
    From Proposition \ref{prop:appslag}, we know $\theta_{L_\delta^\dagger}=\theta_{L_\delta}+\Psi(\varepsilon|u,v,\delta,\Lambda)=\mathcal{O}(\delta^{1+\alpha})+\Psi(\varepsilon|u,v,\delta,\Lambda)$, and $\theta_{L_\delta'}=\mathcal{O}(\delta^3)$. Substituting into the Floer degree formula, after taking $\Lambda\gg 1$, $\delta \ll\Lambda^{-1/2}\ll1 $, and $\varepsilon \ll1$ sequentially, it follows that
    
    $$\mu_{L_\delta^{\dagger},L_{\delta}'}(p^\dagger)=\frac{\sum\limits_{k=1}^n\alpha_k+\theta_{L_\delta^\dagger}(p^\dagger)-\theta_{L_\delta'}(p^\dagger)}{\pi}\left\{\begin{array}{l}
        \geqslant \frac{c\delta \Lambda^{1/2}+\mathcal{O}(\delta^{1+\alpha})+\Psi(\varepsilon|u,v,\delta,\Lambda)}{\pi}>0,   \\
        \leqslant \frac{n\pi -c\delta\Lambda^{1/2}+\mathcal{O}(\delta^{1+\alpha})+\Psi(\varepsilon|u,v,\delta,\Lambda)}{\pi}<n, 
    \end{array}\right.$$
Since the Floer degree $\mu_{L_\delta^{\dagger},L_{\delta}'}$ is an integer, Claim (e) follows. 
\end{proof}
Proposition \ref{prop:perturb} for the case $L_\delta'=\Pi_-^\delta\cup \Pi_+^\delta$ follows from the previous arguments and Claim (e). 
\end{proof}

\begin{cor}\label{cor:M0>0}
    We have $M_0>0$. 
\end{cor}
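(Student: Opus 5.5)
We compare the compactified approximate special Lagrangian $\bar L_\delta=\overline{\operatorname{graph}(\delta du)}$ with the compactified asymptotic cone $\bar L'_\delta=\overline{\Pi_+^\delta\cup\Pi_-^\delta}$, i.e.\ the graph of $\pm\delta\, dV$. Both are asymptotic to $\Pi^\delta_\pm$. By Corollary \ref{cor:indexv} we have normalised $V$ to have index $1$. The plan is to run the Imagi--Joyce--Oliveira dos Santos strip argument on these two objects. It should produce a holomorphic strip whose area equals a potential difference proportional to $-\delta M_0$. Positivity of the area then gives the sign of $M_0$.

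\textbf{Step 1: the two objects are isomorphic in $D^bFuk(M)$.} After the rotation $U_\delta$, both $\bar L_\delta$ and $\bar L'_\delta$ are closed exact graded Lagrangians asymptotic to $\Pi_0\cup\Pi_\phi$ with the same grading asymptotics. This holds because $\theta_{L_\delta}=\mathcal O(\delta^{1+\alpha})$ by Proposition \ref{prop:appslag}, and the limiting grading along $\Pi_\phi$ is the first alternative of Theorem \ref{thm:classifyaclag}(2) for both. By Theorem \ref{thm:classifyaclag}(3) they are therefore isomorphic in $D^bFuk(M)$. One caveat: $\Pi_+\cup\Pi_-$ is immersed at the origin, so I would replace it by a tiny smoothing (the Lagrangian surgery at $0$ in the correct direction) whose potential differs by $o(1)$.

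\textbf{Step 2: intersections and the strip.} Apply Proposition \ref{prop:perturb} with $v$ replaced by $\pm V$. Inside $\pi^{-1}(B_\Lambda(0))$ there are then no degree $0$ or $n$ intersections of $\bar L_\delta^\dagger$ and $\bar L'_\delta$. Proposition \ref{prop:holomorphicstrip} gives a strip with corners $p,q$ of degrees $0$ and $n$. These corners must lie either over $|x|\geq\Lambda$ or at $\infty_\pm$, where $T^*_{\infty_\pm}$ and both Lagrangians meet.

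Next I estimate the potentials using \eqref{eqn:Lagpotential}. At $\infty_\pm$ the values are $c_\pm$. For $\operatorname{graph}(\delta du)$ we have $-\delta u+\tfrac{\delta}{2}\langle x,\nabla u\rangle\to \pm\delta M_0$, since the quadratic part cancels and the decay terms are $\mathcal O(|x|^\mu)$. For the cone the potential vanishes identically, again because $-V+\tfrac12\langle x,\nabla V\rangle=0$. Hence at any intersection over $|x|\geq\Lambda$, the potential difference is $\pm\delta M_0+\mathcal O(\delta\Lambda^{\mu})$, with the sign fixed by the sheet. The area formula then gives
\[
0<\int_S\omega=(f_{\bar L}-f_{\bar L'})(q)-(f_{\bar L}-f_{\bar L'})(p).
\]
The remaining task is to identify which sheet each of the degree $0$ and degree $n$ corners lies on. Following IJS, the degree $n$ corner $q$ sits on the $\Pi_+$ end and the degree $0$ corner $p$ on the $\Pi_-$ end. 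If that identification holds, the area equals $2\delta M_0+\mathcal O(\delta\Lambda^\mu)+\Psi(\varepsilon)$.

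\textbf{Step 3: conclude, and the main obstacle.} Letting $\varepsilon\to0$ and then $\Lambda\to\infty$ gives $2\delta M_0\ge 0$, so $M_0\ge0$.

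To rule out $M_0=0$, I would use the strict positivity of $\int_S\omega$. If $M_0=0$, the area tends to $0$. A strip with a corner at infinity and passing through a prescribed point of $\bar L_\delta$ near $\Sigma$ cannot have vanishingly small area, by the monotonicity lower bound. Alternatively, $M_0=0$ can be excluded directly. The potential $f_{L_\delta}$ would then tend to $0$ at both ends and vanish on $\Sigma$. By the maximum principle for the harmonic function $w=-u+\tfrac12\langle x,\nabla u\rangle$, which is harmonic because $\Delta\langle x,\nabla u\rangle=2\Delta u$, we would get $w\equiv0$. That makes $u$ homogeneous of degree $2$, contradicting $\Sigma\neq\emptyset$. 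I expect this second route to be cleaner.

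The genuine difficulty is the sheet and degree bookkeeping at infinity. One must determine, via Lemma \ref{lem:floerdegreeforgraph} applied to $u\mp V$ whose Hessian tends to $0$, or more safely via the intersections with the fibres $T^*_{\infty_\pm}$ and the gradings \eqref{eqn:compactgrading}, which end carries the degree $0$ point and which carries the degree $n$ point. This is what fixes the sign, and I would mirror the corresponding computation in IJS that yields $A(L)>0$.
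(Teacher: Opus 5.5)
Your exclusion of $M_0=0$ via the maximum principle for $w=-u+\tfrac12\langle x,\nabla u\rangle$ is exactly the paper's argument. However, the step that fixes the \emph{sign} of $M_0$ is missing.

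You assert that the degree-$0$ corner lies on the $\Pi_-^\delta$ end and the other corner on the $\Pi_+^\delta$ end ``if that identification holds''. Nothing in your strip set-up supplies this. For a strip between $\bar L_\delta^\dagger$ and a smoothing of $S_-^n\cup S_+^n$, the area formula gives either $+2\delta M_0$ or $-2\delta M_0$, depending on which end carries which corner. Here the errors are $\mathcal{O}(\delta\Lambda^\mu)$, the small neck constant, and $\Psi$. If both corners sit on the same end, the area is roughly $0$. So positivity of the area does not even give $M_0\geqslant 0$.

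Nor can the placement be read off from the gradings. $\bar L_\delta$ and the cone have the same grading limits at $\infty_\pm$. The degrees of the intersection points created there by the perturbation are therefore local Morse-type indices of the difference of the two tails. These are governed by the decaying part of $u\mp V$ and by the chosen perturbation, not by the gradings. As it stands, your argument proves only $M_0\neq 0$.

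The paper avoids this bookkeeping by not comparing $\bar L_\delta$ with a smoothed cone at all. It uses the exact triangle $S_+^n[-1]\to\bar L_\delta^\dagger\to S_-^n\to S_+^n$ from \cite[Theorem 2.17]{IJS}. It then takes the holomorphic triangle $\Delta$ of \cite[Theorem 2.16]{IJS}, with corners $p\in S_+^n[-1]\cap\bar L_\delta^\dagger$, $q\in\bar L_\delta^\dagger\cap S_-^n$ and $r=0$, of degrees $0,0,1$. Since $p\in S_+^n$ and $q\in S_-^n$, and both lie outside $\pi^{-1}(B_\Lambda(0))$ by Proposition \ref{prop:perturb}, they are automatically on the $\Pi_+^\delta$ and $\Pi_-^\delta$ ends respectively. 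The potentials of $S_\pm^n$ are constant, so the corner at $0$ drops out. Stokes then gives $0<\operatorname{Area}(\Delta)=f_{\bar L_\delta^\dagger}(p)-f_{\bar L_\delta^\dagger}(q)=2\delta M_0+\mathcal{O}(\delta\Lambda^\mu)+\Psi(\varepsilon|\delta,u,v,\Lambda)$.

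The exact triangle records the direction of the cone, which is where the index-$1$ normalisation from Corollary \ref{cor:indexv} enters. This directional information is what fixes the sign. Replacing your Steps 1--2 by this triangle argument, and keeping your maximum-principle step, gives the full proof.
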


\begin{proof}
   According to the construction of the $\delta$-dependent partial compactification $(M,\omega)$ (\cf Section \ref{sec:partialcompact}) and Proposition \ref{prop:perturb}, we can find a family of Hamiltonian perturbations $\bar L_\delta^\dagger$ of $L_\delta$ parametrised by $\varepsilon\ll 1$, which intersects $\bar L_\delta'=\overline{\Pi_-^\delta\cup\Pi_+^\delta}=S_-^n\cup S_+^n$ transversely and has no degree $0$ or $n$ intersections over $B_\Lambda(0)$. Then by \cite[Theorem 2.17]{IJS}, 
    we have a distinguished triangle 
    $$S_+^n[-1]\to \bar L_\delta^{\dagger}\to S_-^n\to S_+^n. $$
    Then \cite[Theorem 2.16]{IJS} implies that there exists a holomorphic triangle $\Delta$ whose corners are $p\in S_{+}^n[-1]\cap \bar L_\delta^{\dagger}$, $q\in \bar L_\delta^{\dagger}\cap S_-^n$, and $r\in  S_-^n\cap S_+^n[-1]=\{0\}$. These corners have Floer degrees $\mu_{S_{+}^n[-1], \bar L_\delta^{\dagger}}(p)=0$, $\mu_{\bar L_\delta^{\dagger}, S_-^n}(q)=0$, and $\mu_{S_-^n,S_{+}^n[-1]}(r)=1$. Note that $S_-^n\cup S_+^n[-1]$ has the same Lagrangian angle as the union of two planes $\bar L_\delta'$. After taking $\Lambda \geqslant \Lambda_0\gg 1$, we know that $p,q\notin \pi^{-1}(B_{\Lambda}(0))$ by the perturbation result and that they belong to \textit{different} ends of $\bar L_{\delta}'$. Using Stokes' formula, Remark \eqref{rmk:potentialinv}, \eqref{eqn:Lagpotential} and \eqref{eqn:asym-of-u}, we compute the symplectic area of $\Delta$, 
    \begin{equation*}
        \begin{aligned}
            0<\operatorname{Area}(\Delta)=\int_{\partial \Delta}\tilde{\lambda}&=\int_{0\to q}df_{S_-^n}+\int_{q\to p}df_{\bar L_\delta^\dagger}+\int_{p\to 0}df_{S^n_+[-1]}
            \\&=f_{S_-^n}(q)-f_{S_-^n}(0)+f_{\bar L_\delta^\dagger}(p)-f_{\bar L_\delta^\dagger}(q)+f_{S_+^n}(0)-f_{S_+^n}(p)\quad
            \\&=f_{\bar L_\delta^\dagger}(p)-f_{\bar L_\delta^\dagger}(q)
            \\&=f_{\bar L_\delta}(p)-f_{\bar L_\delta}(q)+\Psi(\varepsilon|\delta,u,v,\Lambda)
            \\&= 2\delta M_0+\mathcal{O}(\delta \Lambda^\mu)+\Psi(\varepsilon|\delta,u,v,\Lambda).
        \end{aligned}
    \end{equation*}
Taking $\varepsilon \to 0^+$, we get $M_0+\mathcal{O}(\Lambda^\mu)\geqslant 0$ for all $\Lambda \gg 1$. Thus, $M_0\geqslant 0$. 

Suppose that $M_0=0$. Then by \eqref{eqn:higher-order derivative estimates} and the structure Theorem \ref{thm:structureofz2}, 
$f_{L_{du}}=-u+\frac{1}{2}\langle x,\nabla u\rangle$ is a $\Z_2$-harmonic function with asymptotic value $0$ as one approaches the branching set $\Sigma$ or the infinity. Applying the maximum principle to the subharmonic function $|f_{L_{du}}|^2$ on $B_{\Lambda}(0)\setminus B_{\Lambda^{-1}}(\Sigma)$ and letting $\Lambda \to \infty$, we obtain $f_{L_{du}}\equiv 0$, meaning that $u$ is homogeneous with respect to $0\in \R^n$. Then $u=\pm V$, contradicting our assumption.   
\end{proof}

Combining Corollary \ref{cor:indexv}, \ref{cor:M0>0} with Proposition \ref{prop: para-match}, we can find a unique candidate $\Z_2$-harmonic function $v$ arising from the Lawlor necks, whose asymptotic harmonic polynomial at infinity is the same as $u$. For $\delta\ll 1$, $L_\delta'=\operatorname{graph}(\delta dv)$ defines an approximate special Lagrangian (\cf Proposition \ref{prop:appslag}). 

Suppose for contradiction that $L_{\delta }\neq L_{\delta }'$, \ie they do not coincide with each other on an open set. Applying the perturbation result to their partial compactifications inside $(M,\omega)$ (depending on $\delta$), we get a family of Hamiltonian perturbations
$\bar L_{\delta}^{\dagger}$ which intersects $\bar L_{\delta}'$ transversely and has no degree $0$ or $n$ intersections over $B_{\Lambda}(0)\setminus S_{\Lambda}$. 

\begin{cor}
\label{cor:potentialdiffsmallest}
    For sufficiently small $\varepsilon\ll 1$, at any intersection point $p\in \bar L_\delta^\dagger\cap \bar L_\delta'$ of degree $0$ or $n$, the potential difference is small,
    \begin{equation*}
        |f_{\bar L_\delta^{\dagger}}-f_{\bar L_\delta'}|(p)\leqslant C\delta \Lambda^{-1/4}+\Psi(\varepsilon|\delta,u,v,\Lambda).
    \end{equation*}
    
\end{cor}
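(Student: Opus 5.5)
By Proposition \ref{prop:perturb}, every intersection point $p\in\bar L_\delta^\dagger\cap\bar L_\delta'$ of degree $0$ or $n$ lies in one of three regions. Either it sits at infinity or outside $\pi^{-1}(B_\Lambda(0))$, or it lies in $\pi^{-1}(S_\Lambda)$, the common near-branch region. The plan is to bound the potential difference region by region, comparing $\bar L_\delta^\dagger$ with the unperturbed $\bar L_\delta$ via item (3) of Proposition \ref{prop:perturb}. That comparison costs the term $\Psi(\varepsilon|\delta,u,v,\Lambda)$. It also lets us replace $p$ by a nearby point $p_0\in L_\delta\cap L_\delta'$, or $p_0=\infty_\pm$, since the intersection points converge to $L_\delta\cap L_\delta'$ as $\varepsilon\to0$ by Case (d). Inside $\C^n$, Remark \ref{rmk:potentialinv} says the potential difference is unchanged by the compactification. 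Lemma \ref{lem:reg+potential} then gives, on the corresponding branches,
\[
(f_{L_\delta}-f_{L_\delta'})=\delta\Big(-(u-v)+\tfrac12\langle x,\nabla(u-v)\rangle\Big).
\]
At an intersection point we have $du=dv$, so this reduces to $-\delta(u-v)(q)$ with $q=\pi(p_0)$. Everything therefore comes down to bounding $|u-v|$ at common critical-type points in the bad regions.

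\textbf{Near infinity.} Here $u$ and $v$ have the same quadratic part $V$ and the same constant $M_0$; the latter uses Corollary \ref{cor:M0>0} and the choice of $v$ via Proposition \ref{prop: para-match}. By \eqref{eqn:higher-order derivative estimates}, $|u-v|\le C|x|^{\mu}$ for $|x|\ge\Lambda$, with $\mu<0$. The sign ambiguity needs care: we must make sure the intersection pairs branches that are asymptotic to the same $\pm V$. If the branches were opposite, then $du=dv$ would force $|dV|\lesssim|x|^{\mu-1}$, which is impossible for large $|x|$ because $V$ is non-degenerate. So the potential difference at such points is $\mathcal O(\delta\Lambda^{\mu})$. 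At the points $\infty_\pm$ themselves it is $c_\pm(L_\delta)-c_\pm(L_\delta')=\mp\delta M_0\pm\delta M_0=0$ by \eqref{eqn:potentialcompact}. Choosing $\mu\le-1/4$, which is allowed since any $2-n<\mu<0$ works, gives $C\delta\Lambda^{-1/4}$.

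\textbf{Near the branch sets.} If $q\in S_\Lambda$, then, say, $\rho(q)\le\Lambda^{-1}$ and $\rho'(q)\le\Lambda^{-1/2}$. Theorem \ref{thm:structureofz2} with $\mathbf A\equiv0$ gives $|u|\le C\rho^{3/2}$ and $|v|\le C\rho'^{3/2}$. Hence
\[
|u-v|(q)\le C(\Lambda^{-3/2}+\Lambda^{-3/4})\le C\Lambda^{-1/4},
\]
and the potential term $\langle x,\nabla(u-v)\rangle$ vanishes because $du=dv$ at $q$. In fact the bound is even better than claimed. I use only the crude bound, since the perturbed point lies near, not at, an exact intersection. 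For that step I would invoke the $C^0$ convergence of potentials in item (3) together with the continuity of $f_{L_\delta}-f_{L_\delta'}$, including across $\Sigma$ by Lemma \ref{lem:reg+potential}, to absorb the discrepancy into $\Psi$.

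\textbf{Main obstacle.} The delicate step is justifying that every degree $0$ or $n$ intersection of the perturbed pair is $\Psi$-close to a genuine intersection of $L_\delta,L_\delta'$ (or to $\infty_\pm$) in one of the two bad regions. This needs a compactness/covering argument uniform in the noncompact end. I would carry it out in the compactified $M$, where $\bar L_\delta,\bar L_\delta'$ are compact, so Case (d) applies globally. A second point to settle is correct branch bookkeeping on the double covers near infinity, to rule out pairing opposite branches. I would handle it by the gradient argument above.
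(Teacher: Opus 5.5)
Your proposal is correct and takes the same route as the paper. The paper's proof only notes that degree $0$ or $n$ intersections project into $S_\Lambda$ or outside $B_\Lambda(0)$, then cites Lemma \ref{lem:reg+potential}, Theorem \ref{thm:structureofz2} and the matching of $V$ and $M_0$ in the choice of $v$; your compactness reduction in $M$ to exact intersections (or $\infty_\pm$) and your branch-matching argument near infinity fill in what it leaves implicit. The one small difference is near the branch sets: the stated exponent $-1/4$ is what one gets by bounding each potential separately on $S_\Lambda$ via $|f_{L_\delta}|\leqslant C\delta(\rho^{3/2}+\rho^{1/2})\leqslant C\delta\Lambda^{-1/4}$, which needs no $du=dv$, whereas you use $du=dv$ at the limiting intersection to get the sharper $\Lambda^{-3/4}$.
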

\begin{proof}
    Note that all possible intersections of Floer degree $0,n$ either lie in (after projecting to the base) the $\Lambda^{-1/2}$-neighbourhood to both the branching sets or the complement of $\Lambda$-neighbourhood of $0\in \R^n$. Therefore, the estimate on potential differences follows from Lemma \ref{lem:reg+potential}, the structure Theorem \ref{thm:structureofz2} and the choice of $v$. 
\end{proof}

\subsection{Small holomorphic disk argument}
    Now we are ready to get a contradiction by showing that $L_\delta$ and $L_\delta'$ coincide away from the branching sets: 

    \begin{prop}\label{prop:uvagreeatq0}
        For any $q_0\in \R^n$ such that $\rho(q_0),\rho'(q_0)\geqslant 1$, we have $du(q_0)=dv(q_0)$ (up to sign).  
    \end{prop}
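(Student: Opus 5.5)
We argue by contradiction, following the strategy outlined in the introduction. Suppose $du(q_0)\neq \pm dv(q_0)$ at some $q_0$ with $\rho(q_0),\rho'(q_0)\geqslant 1$. Then there is a point $r\in L_\delta$ over $q_0$ with $d(r,L_\delta')\geqslant c_0\delta$, where $c_0=\tfrac12\min|du(q_0)\mp dv(q_0)|>0$ is independent of $\delta$. By continuity this persists for $\bar L^\dagger_\delta$ once $\varepsilon$ is small.

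\textbf{Producing a strip with small area.} By Corollary \ref{cor:indexv}, Corollary \ref{cor:M0>0} and the choice of $v$ (same $M_k$ and $M_0$), the compactifications $\bar L_\delta^\dagger$ and $\bar L_\delta'$ are asymptotic to the same planes $\Pi^\delta_\pm$ with the same grading alternative. Item (3) of Theorem \ref{thm:classifyaclag} then shows they are isomorphic in $D^bFuk(M)$. Proposition \ref{prop:holomorphicstrip} gives a (possibly broken) $\tilde J$-holomorphic strip $S$ with corners $p,q$ of degree $0$ and $n$, passing through the prescribed point $r^\dagger\in \bar L_\delta^\dagger$ near $r$. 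By Corollary \ref{cor:potentialdiffsmallest}, the potential differences at $p,q$ are at most $C\delta\Lambda^{-1/4}+\Psi(\varepsilon)$. Since $v$ has the same $M_0$ as $u$, the potentials match at $\infty_\pm$, which handles corners at infinity via \eqref{eqn:potentialcompact} and Remark \ref{rmk:potentialinv}. The area formula therefore gives
\[
\operatorname{Area}(S)\leqslant 2C\delta\Lambda^{-1/4}+\Psi(\varepsilon|\delta,u,v,\Lambda).
\]

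\textbf{Area lower bound by monotonicity.} Consider the ball $B=B_{c_0\delta/2}(r^\dagger)\subset\C^n$. It lies over a region at distance $\geqslant 1/2$ from $\Sigma\cup\Sigma'$ and inside $\{|\sum(x_k^2-y_k^2)|\leqslant T\}$, where $\tilde J=J$ is standard. By the choice of $c_0$ and $\varepsilon$, the boundary $\partial S\subset \bar L^\dagger_\delta\cup\bar L'_\delta$ meets $B$ only along $\bar L_\delta^\dagger$, and there it is a graph of a $1$-form with $C^1$-small slope $O(\delta)$. We reflect across this nearly flat Lagrangian, or apply the monotonicity lemma for holomorphic curves with Lagrangian boundary with constants uniform for Lagrangians $C^1$-close to $\R^n$ at scale $\delta$, since these rescale to nearly flat planes. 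This gives
\[
\operatorname{Area}(S\cap B)\geqslant c_1(c_0\delta)^2.
\]

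This $\delta^2$ bound is weaker than the needed $\gtrsim\delta$, which is the main obstacle. To fix it, we use the anisotropic geometry: the fibre direction scales like $\delta$ while the base scale is $1$. The plan is to rescale the fibre coordinates by $\delta^{-1}$. The symplectic form becomes $\delta\,\omega_{\mathrm{std}}$ after the linear map $(x,y)\mapsto(x,y/\delta)$, and the Lagrangians become the $\delta$-independent graphs of $du$ and $dv$. The almost complex structure becomes a non-standard but tamed $J_\delta$. We then run a monotonicity or energy-quantisation argument in the rescaled picture, on a region of unit size containing $q_0$ where the graphs of $du$ and $dv$ are separated. We expect a uniform lower bound $\operatorname{Area}_{\omega_{\mathrm{std}}}\geqslant c_2$, which in original units gives $\operatorname{Area}(S)\geqslant c_2\delta$. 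The alternative is to use the projection of $S$ to the base near $q_0$ together with the separation $c_0$, giving an area at least $\delta c_0\cdot(\text{base length})$ via $\omega=\sum dx_k\wedge dy_k$. We would try this projection estimate first and fall back on the rescaled monotonicity if needed.

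\textbf{Conclusion.} We combine $c_2\delta\leqslant 2C\delta\Lambda^{-1/4}+\Psi(\varepsilon)$ and let $\varepsilon\to0$. For $\Lambda$ large this is a contradiction, since $\delta<\delta_0(\Lambda)$ can be fixed after $\Lambda$. Hence $du(q_0)=\pm dv(q_0)$.
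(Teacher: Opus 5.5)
Your setup matches the paper: the contradiction hypothesis, the $D^bFuk$ isomorphism, the strip through a point over $q_0$, the upper bound $\operatorname{Area}(S)\lesssim\delta\Lambda^{-1/4}+\Psi$, and the scale-$\delta$ monotonicity bound $\gtrsim\delta^2$. However, the step you correctly identify as the main obstacle, upgrading $\delta^2$ to $\delta$, is not proved, and your first route cannot work. Under $(x,y)\mapsto(x,Y)=(x,y/\delta)$, the transported $J_\delta$ has associated metric $\omega_{\mathrm{std}}(\cdot,J_\delta\cdot)=\delta^{-1}|dx|^2+\delta|dY|^2=\delta^{-1}(|dx|^2+|dy|^2)$. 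So the rescaled picture is just a homothety of the original one. Monotonicity is scale-invariant: in this metric the two Lagrangians are $\sim\sqrt{\delta}\,c_0$ apart, and you again get only $\operatorname{Area}_{\omega_{\mathrm{std}}}\gtrsim\delta$, i.e.\ $\delta^2$ in original units. Your ``expected uniform $c_2$'' is therefore a restatement of the claim, not a consequence of any uniform geometry. Your second route (area $\gtrsim\delta c_0\times$ base length) is exactly the right heuristic. But you give no reason why $S$ must extend to base scale $r_0$ rather than closing up near the prescribed point, for instance as a thin tongue with boundary on a single sheet.

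The missing idea is the paper's filling dichotomy. For a.e.\ $r\in[\tfrac{\delta}{4}C_0,r_0]$, consider $\Gamma_r=S\cap\partial B_r(p_0')$ and suppose $\mathcal{H}^1(\Gamma_r)<\eta\delta$ with $\eta\ll C_0$. Then no arc of $\Gamma_r$ can join $L_\delta^\dagger$ to $L_\delta'$, or join two different local sheets, since these are $\geqslant\tfrac{\delta}{2}C_0$ apart over $B_{2r_0}(q_0)$. This is why $C_0$ must also bound $|du|(q_0)$ and $|dv|(q_0)$ from below, not just $|du\mp dv|(q_0)$; the same issue affects the radius of your monotonicity ball. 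Hence each arc can be closed by a comparably short arc in its own sheet, and the resulting loop filled by a disc of area $\lesssim\operatorname{Length}^2$. The rest of $\partial(S\cap B_r)$ bounds a chain inside the Lagrangian discs, where $\omega$ vanishes. Exactness of $\omega$ then gives $\operatorname{Area}(S\cap B_r)\lesssim(\eta\delta)^2$, which contradicts the monotonicity bound $\gtrsim\delta^2$ at scale $\tfrac{\delta}{4}C_0$ once $\eta$ is small. Hence $\mathcal{H}^1(\Gamma_r)\geqslant\eta\delta$ for a.e.\ such $r$. The coarea formula then yields $\operatorname{Area}(S\cap B_{r_0}(p_0'))\geqslant\eta\delta\,(r_0-\tfrac{\delta}{4}C_0)\gtrsim\eta r_0\delta$. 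This is the rigorous form of your projection heuristic, and it closes the argument.
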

    
     As $q_0$ is away from both branching sets, over a small neighbourhood of $q_0$, $du,dv$ separate into the union of two single-valued branches respectively, and 
    all Lagrangians $L_\delta,L_\delta'$ split into two sheets, which are topological disks with $C^{1,\alpha}$-bounded geometry (as the Hamiltonian perturbation can be arbitrarily $C^\infty$-small away from the branching set). 

    Suppose to the contrary that $du(q_0)\neq dv(q_0)$ (the case $du(q_0)\neq -dv(q_0)$ follows from replacing $v$ by $-v$). By the above discussion, we can find $r_0\leqslant 1$ (of order $\mathcal{O}(1)$, compared with $\delta$ below) and 
    $$C_0=:\min \{|du-dv|(q_0),|du+dv|(q_0),|du|(q_0),|dv|(q_0)\}>0,$$
    such that 
    \begin{itemize}
        \item $|du(q_1)\pm dv(q_2)|\geqslant \frac{3}{4}C_0$ holds for all $q_1,q_2\in B_{2r_0}(q_0)$. 
        \item $|du|(q),|dv|(q)\geqslant \frac{3}{4}C_0$ for all $q\in B_{2r_0}(q_0)$.  
    \end{itemize}Then for $\delta \leqslant \delta_0$ and sufficiently small $\varepsilon\leqslant \varepsilon_0(\delta,u,v,\Lambda,q_0)\ll1$, we have
    \begin{equation}
        \label{eqn:distanceoftwolagsnearq0}
        d_{\C^n}(L_\delta^\dagger\cap \pi^{-1}(B_{r_0}(q_0)),L_\delta'\cap \pi^{-1}(B_{r_0}(q_0)))\geqslant \frac{\delta}{2}C_0,
    \end{equation}
    and over $B_{r_0}(q_0)$, $L_\delta^{\dagger}$ also splits into two sheets separated by distance $\frac{\delta}{2}C_0$.  
    
     Set $p_0'=(q_0,\delta dv(q_0))\in L_{\delta}'$. We arrange the auxiliary parameter $T=T(\delta)\gg 1$ to be sufficiently large appearing in the partial compactification procedure (\cf Section \ref{sec:partialcompact}), such that on a large ball in $\C^n$ containing $p_0'$, the symplectic Calabi--Yau structure agrees with the standard one on $\C^n$, for all $ 0<\delta\ll 1$.  
    
    By Theorem \ref{thm:classifyaclag},
    $\bar L_\delta^{\dagger}$ and $\bar L_\delta'$ are isomorphic in $D^bFuk(T^*S^n\#T^*S^n)$. It follows from Proposition \ref{prop:holomorphicstrip} and Corollary \ref{cor:potentialdiffsmallest} that
    
\begin{lem}
    There exists a (possibly broken) holomorphic strip $S$, passing through $p_0'$, with boundary on $\bar L_\delta^\dagger$ and $\bar L_\delta'$ and with symplectic area
    $\int_{S}\omega\leqslant C\delta \Lambda^{-1/4}+\Psi(\varepsilon|\delta,u,v,\Lambda).$
\end{lem}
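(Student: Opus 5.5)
The plan is to apply Proposition \ref{prop:holomorphicstrip} to the pair $(\bar L_\delta^\dagger,\bar L_\delta')$ inside the partial compactification $(M,\omega)$, and then to bound the area of the resulting strip using Corollary \ref{cor:potentialdiffsmallest}.

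First I verify the hypotheses of Proposition \ref{prop:holomorphicstrip}. By Proposition \ref{prop:perturb}, for generic $\varepsilon$ the perturbation $\bar L_\delta^\dagger$ is smooth, compact, exact and graded, and it meets $\bar L_\delta'$ transversely. The Lagrangian $\bar L_\delta'$ is smooth by the explicit description in Example \ref{eg:z2}. Both Lagrangians are asymptotic to $\Pi_\pm^\delta$ with the same quadratic asymptote. By Corollary \ref{cor:indexv} and the normalisation of index $1$, they take the same branch of item (2) of Theorem \ref{thm:classifyaclag}. Item (3) of that theorem, transported by the unitary $U_\delta$ as in the proof of Corollary \ref{cor:indexv}, then shows that they are isomorphic in $D^bFuk(M)$. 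Choosing $\widetilde J$ convex at infinity and equal to the standard $J$ on the region $|\sum(x_k^2-y_k^2)|\leqslant T(\delta)$, which contains $p_0'$, Proposition \ref{prop:holomorphicstrip} gives a (possibly broken) $\widetilde J$-holomorphic strip $S$. It has boundary on $\bar L_\delta^\dagger\cup\bar L_\delta'$, corners $p,q\in\bar L_\delta^\dagger\cap\bar L_\delta'$ with $\mu_{\bar L_\delta^\dagger,\bar L_\delta'}(p)=0$ and $\mu_{\bar L_\delta',\bar L_\delta^\dagger}(q)=n$, and we may require it to pass through the prescribed point $p_0'\in L_\delta'$.

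Next I compute the area. By the topological energy formula,
\[
\int_S\omega=(f_{\bar L_\delta^\dagger}-f_{\bar L_\delta'})(q)-(f_{\bar L_\delta^\dagger}-f_{\bar L_\delta'})(p).
\]
A corner of degree $n$ for $(\bar L_\delta',\bar L_\delta^\dagger)$ has degree $0$ for the reversed pair, by the standard duality $\mu_{L,L'}+\mu_{L',L}=n$. So both $p$ and $q$ are intersection points of degree $0$ or $n$, and Corollary \ref{cor:potentialdiffsmallest} bounds each potential difference by $C\delta\Lambda^{-1/4}+\Psi(\varepsilon|\delta,u,v,\Lambda)$. Absorbing the factor $2$ into $C$ gives the stated bound.

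The step I expect to need the most care is the case where a corner lies at one of the points at infinity $\infty_\pm$, or in the cotangent fibres over them. Here the potentials are the limits $c_\pm$ from \eqref{eqn:potentialcompact}, so I must check that $L_\delta$ and $L_\delta'$ have equal constants $c_\pm$ up to $\mathcal{O}(\delta\Lambda^{\mu})$. This follows from \eqref{eqn:Lagpotential} and \eqref{eqn:higher-order derivative estimates}: since $u$ and $v$ share the quadratic part and the constant $M_0$, one gets $f\to\pm\delta M_0$ on each end for both Lagrangians. I also need the perturbation to preserve these limits up to $\Psi(\varepsilon)$, which is item (3) of Proposition \ref{prop:perturb}.

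For broken strips, the area is the sum over the components. The same Stokes computation telescopes, so the total is still the difference of the end-corner potential differences, and the bound persists.
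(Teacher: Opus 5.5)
Your proposal is correct and follows the paper's route: Theorem \ref{thm:classifyaclag} makes $\bar L_\delta^\dagger$ and $\bar L_\delta'$ isomorphic in $D^bFuk$, Proposition \ref{prop:holomorphicstrip} then gives a strip through $p_0'$, and Corollary \ref{cor:potentialdiffsmallest} bounds its area through the potential differences at the degree $0$/$n$ corners. Your additional checks fill in details the paper leaves implicit: that both Lagrangians fall into the same alternative of Theorem \ref{thm:classifyaclag}, that the end constants $c_\pm=\pm\delta M_0$ agree for $u$ and $v$ so corners at infinity do no harm, and that the area of a broken strip telescopes.
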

    To derive a contradiction, we aim to show that the symplectic area 
is bounded from below by $\eta\delta$, for a small number $\eta\ll 1$ depending on the local geometry around $q_0$, but independent of $\delta$ (\cf Figure \ref{fig:area}). The key input is the following dichotomy using a local monotonicity formula for the holomorphic strip $S$ around $p_0'$.
\begin{figure}[H]
    \centering
    \includegraphics[width=0.6\linewidth]{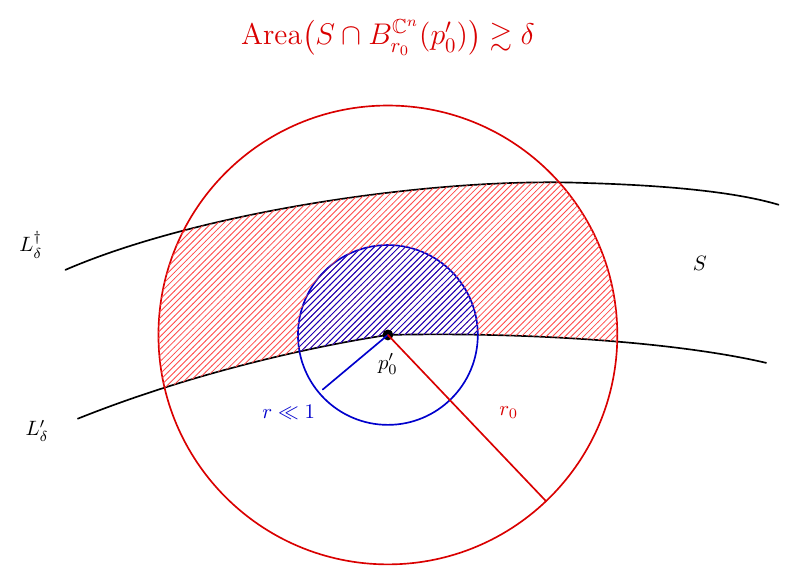}
    \caption{The heuristic of the area lower bound}
    \label{fig:area}
\end{figure}
\begin{prop}
    There exists small $\eta_0=\eta_0(u,v,r_0,C_0)$ independent of $\delta$ such that for a.e. $r$ with $0< r\leqslant r_0$ and $\eta\leqslant \eta_0$, we have the following alternatives:
    \begin{enumerate}
        \item [\textnormal{(1)}] either the length of $\Gamma_r$ is bounded from below by $\eta \delta$, \ie $\mathcal{H}^1(\Gamma_r)\geqslant \eta \delta$, where $\Gamma_r:=S\cap \partial B_r^{\C^n}(p_0')$;
        
        \item [\textnormal{(2)}] or the local area is small $\operatorname{Area}(S\cap B_r^{\C^n}(p_0'))=\int_{S\cap B_r^{\C^n}(p_0')}\omega\leqslant C  (\eta\delta)^2$, where $C=C(u,v,r_0,C_0)$ is independent of $\eta,\delta$.  
    \end{enumerate}
\end{prop}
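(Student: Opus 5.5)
The plan is to combine the isoperimetric inequality for holomorphic curves with the local geometry near $p_0'$. Set $a(r):=\operatorname{Area}(S\cap B_r^{\C^n}(p_0'))$. Since $S$ is $J$-holomorphic and $J$ is the standard structure on a large ball containing $p_0'$ (by the choice $T=T(\delta)\gg1$), $\omega|_S$ is the area form and $a$ is absolutely continuous with $a'(r)\geqslant \mathcal{H}^1(\Gamma_r)$ for a.e. $r$ (coarea formula). The first task is to describe the boundary of $S\cap B_r(p_0')$. It consists of $\Gamma_r$ together with pieces of $\partial S$ on $L_\delta^\dagger$ and $L_\delta'$ inside $B_r(p_0')$. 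For $r\leqslant r_0$ and $\delta\ll1$, $B_r^{\C^n}(p_0')$ projects into $B_{r}(q_0)$, so by \eqref{eqn:distanceoftwolagsnearq0} the boundary arcs on $L_\delta^\dagger$ and on $L_\delta'$ are separated by distance at least $\frac{\delta}{2}C_0$. In addition, each Lagrangian splits there into two sheets separated by $\gtrsim\delta C_0$. Each sheet is a $C^{1,\alpha}$ graph, hence a nearly flat exact Lagrangian disk.

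\textbf{Case (1) fails.} Suppose $\mathcal{H}^1(\Gamma_r)<\eta\delta$ with $\eta\leqslant\eta_0\ll C_0$. Every connected component of $\Gamma_r$ is a short arc. If such an arc joined a boundary point on $L_\delta^\dagger$ to one on $L_\delta'$, or joined two different sheets, its length would be at least $\frac{\delta}{2}C_0\cdot\frac12>\eta\delta$. Hence each arc (or closed loop) of $\Gamma_r$ has both endpoints on a single sheet $P$ of one Lagrangian. Since $P$ is a graph of an exact one-form over a disk, we can close each arc $\gamma$ by a path $\gamma'\subset P$ of length $\lesssim \operatorname{length}(\gamma)$; bounded geometry gives this bilipschitz chord–arc control. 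We then compute the area of $S\cap B_r$ by Stokes with the Liouville form $\lambda$ recentred at $p_0'$. On each sheet $P$ we have $\lambda|_P=df_P$, so the contribution of the Lagrangian boundary arcs telescopes into potential differences between the endpoints of the $\Gamma_r$ arcs. These differences equal $\int_{\gamma'}\lambda$, which is bounded by $|\gamma'|^2$ up to constants after recentring at a point of $\gamma$.

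To make this rigorous, the plan is to apply the standard isoperimetric inequality for $J$-holomorphic curves with Lagrangian boundary, in the form $\operatorname{Area}\leqslant C\,\ell^2$ for curves with free boundary on a totally real submanifold of bounded geometry. The inputs are the reflection or doubling argument, or the monotonicity with Lagrangian boundary of the type in McDuff--Salamon. The total free-boundary length to control is $\mathcal{H}^1(\Gamma_r)<\eta\delta$. Provided this length is below the scale where the sheets look flat, which is $\mathcal{O}(1)$ and certainly exceeds $\eta\delta$, the inequality gives $a(r)\leqslant C(\eta\delta)^2$. This is alternative (2), with $C=C(u,v,r_0,C_0)$ coming from the $C^{1,\alpha}$ bounds of the sheets. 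These bounds are uniform in $\delta$ because the sheets are graphs of $\delta du,\delta dv$ over $B_{2r_0}(q_0)$, which is away from both branch sets, and the perturbation is $C^\infty$-small there.

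\textbf{Main obstacle.} The delicate step is the topological claim that a short $\Gamma_r$ implies the relevant piece of $S$ is cut off by short arcs bounding only along one sheet, with area controlled by the square of the total length. One difficulty is that $S\cap B_r$ may have many components. We handle this by summing the isoperimetric inequality over components, using $\sum \ell_i^2\leqslant(\sum\ell_i)^2$. A second difficulty is closed components of $S$ lying entirely inside $B_r$. These are excluded because an exact Lagrangian admits no nonconstant disks, and a closed holomorphic curve in $\C^n$ is constant. Finally, $S$ may be broken, but we apply the argument to each component separately.
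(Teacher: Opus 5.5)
Your proof is correct and follows the paper's argument in dual language. The paper fills each closed loop $\gamma-\sigma_\gamma$ by a chain $D_\gamma$ of area $\lesssim \ell(\gamma)^2$ and caps the remaining sheet-wise boundary by a chain $E$ with $\int_E\omega=0$; this is exactly your Stokes/potential-telescoping computation, once you pair $\int_\gamma\lambda$ with $-\int_{\gamma'}\lambda$ and recentre $\lambda$ on each closed loop $\gamma-\gamma'$. For the same reason, the appeal to a black-box isoperimetric inequality via reflection or monotonicity is unnecessary, since the explicit computation you already give is the rigorous proof.
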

\begin{proof}
    Let $\rho_{p_0'}(\cdot)=d_{\C^n}(\cdot,p_0')$ be the distance function to $p_0'\in \C^n$. Then for a.e. $r>0$, $\Gamma_r$ is a compact $1$-dimensional manifold with boundary, and hence consists of finitely many loops, and arcs with endpoints on $\partial S\subset L_\delta^{\dagger}\cup L_\delta'$.  

    We call $\gamma\subset \Gamma_r$ a \textit{bridge arc} if its two endpoints lie on $\partial S\cap L_{\delta}^{\dagger}$ and $\partial S\cap L_\delta'$. Then \eqref{eqn:distanceoftwolagsnearq0} implies that $\operatorname{Length}(\gamma)\geqslant \frac{\delta}{2}C_0$. We choose $\eta_0\ll C_0$ to guarantee that there are no bridge arcs within $\Gamma_r$. 

In addition to bridge arcs, we also exclude arcs whose endpoints lie on
two different local sheets of the same Lagrangian. As 
two local sheets of \(L_\delta^{\dagger}\) and of \(L'_\delta\) are separated by
\(\frac{C_0}{2}\delta\), by arranging
\(\eta_0\ll C_0\), the condition \(\mathcal H^1(\Gamma_r)<\eta\delta\)
also rules out such sheet-switching arcs.
    
    Below we perform a filling argument to show the second alternative, assuming the first alternative fails for this $r$, \ie $\mathcal{H}^1(\Gamma_r)<\eta \delta$ (\cf Figure \ref{fig:diskfilling}). 

    For any arc $\gamma \subset \Gamma_r$, its two endpoints must lie on the same Lagrangian $L_\delta^\dagger$ or $L_\delta'$. Without loss of generality, we assume the former case. Then there exists another arc $\sigma_\gamma\subset L_\delta^\dagger\cap {\pi^{-1}(B_{2r_0}(q_0))}$ joining the endpoints of $\gamma$, with 
    $\operatorname{Length}(\sigma_\gamma)\lesssim \operatorname{Length}(\gamma)$. If $\gamma \subset \Gamma_r$ is a closed loop, we then set $\sigma_\gamma=\emptyset$.

    \begin{figure}[H]
        \centering
        \includegraphics[width=0.7\linewidth]{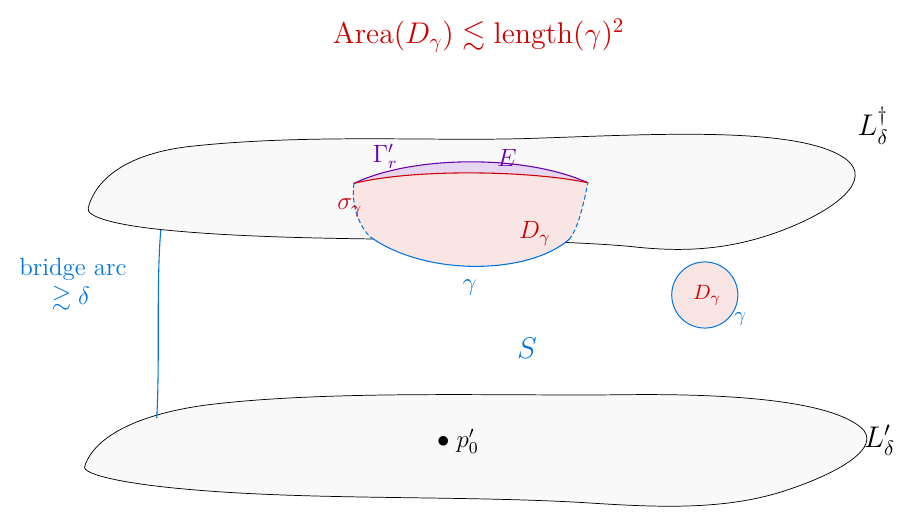}
        \caption{Disk filling with area bound}
        \label{fig:diskfilling}
    \end{figure}
    
    In all cases, the Euclidean isoperimetric inequality implies that we can find a \(2\)-chain \(D_\gamma\subset \C^n\) with boundary $\partial D_\gamma=\gamma-\sigma_\gamma$ and area bound
    $$\operatorname{Area}(D_\gamma)\lesssim \operatorname{Length}(\gamma)^2.$$
    
    The same filling argument applies to those $\gamma$ with endpoints on $L_\delta'$. 

    Set $\partial(S\cap B_r^{\C^n}(p_0'))=\Gamma_r\cup \Gamma_r'$, where $\Gamma_r'=\partial S\cap B_{r}^{\C^n}(p_0')\subset L_\delta^{\dagger} \cup L_\delta'$. Then we have  
    $$\partial\left(S\cap B_{r}^{\C^n}
    (p_0')-\sum_{\gamma} D_\gamma\right)=\Gamma_r'+\sum_{\gamma\subset \Gamma_r}\sigma_\gamma\subset  L_\delta^\dagger\cup L_\delta'$$
Moreover, since there are no bridge arcs, this remaining boundary splits as a sum of two closed \(1\)-chains, one contained in \(L_\delta^\dagger\) and the other in \(L_\delta'\). These local Lagrangian pieces are topological disks, so \(\Gamma_r'+\sum\limits_{\gamma\subset\Gamma_r}\sigma_\gamma\) bounds a \(2\)-chain
$
    E\subset L_\delta^\dagger\cup L_\delta'$. 
Then $S\cap B_r^{\C^n}(p_0')-\sum\limits_{\gamma\subset \Gamma_r} D_\gamma-E$
is a closed \(2\)-cycle with $\int_E\omega=0$ by the Lagrangian condition. 
   
As $\omega=d\lambda$ is exact, it follows that
    \begin{equation*}
        \begin{aligned}
            \int_{S\cap B_r^{\C^n}(p_0')}\omega&=\sum_{\gamma\subset \Gamma_r}\int_{D_\gamma}\omega+\int_E \omega\\&\leqslant \sum_{\gamma\subset \Gamma_r}\operatorname{Area}(D_\gamma)\lesssim\sum_{\gamma\subset \Gamma_r}\operatorname{Length}(\gamma)^2\\&\leqslant  \left(\sum_{\gamma\subset \Gamma_r}\operatorname{Length}(\gamma)\right)^2=\mathcal{H}^1(\Gamma_r)^2\leqslant (\eta\delta)^2,
        \end{aligned}
    \end{equation*}
    completing the proof of the second alternative.
\end{proof}

\begin{lem}
    For a.e. $r\geqslant \frac{\delta}{4}C_0$, for $\eta\leqslant\eta_0\ll 1$ independent of $\delta$, we have $\mathcal{H}^1(\Gamma_r)\geqslant \eta\delta$.  
\end{lem}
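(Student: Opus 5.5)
We argue by contradiction, combining the dichotomy of the preceding proposition with the standard monotonicity inequality for $J$-holomorphic curves. Suppose there is a set of radii of positive measure in $[\frac{\delta}{4}C_0, r_0]$ with $\mathcal{H}^1(\Gamma_r)<\eta\delta$. Pick such an $r_1$. The preceding proposition, applied at $r_1$, gives
\[
\operatorname{Area}(S\cap B^{\C^n}_{r_1}(p_0'))\leqslant C(\eta\delta)^2 .
\]
Set $a(r):=\operatorname{Area}(S\cap B_r^{\C^n}(p_0'))$. Because $\widetilde J=J$ is the standard complex structure on a large ball containing $p_0'$ (we chose $T=T(\delta)$ large), $S$ is a calibrated, hence minimal, surface there. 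By the coarea formula, $a'(r)\geqslant \mathcal{H}^1(\Gamma_r)$ for a.e. $r$.

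\textbf{Interior monotonicity.} The point $p_0'$ lies on $L_\delta'$, which is a boundary of $S$. So we first locate an interior point of $S$ at distance comparable to $\delta$ from both Lagrangians. By \eqref{eqn:distanceoftwolagsnearq0}, $L_\delta^\dagger$ stays at distance $\geqslant\frac{\delta}{2}C_0$ from $p_0'$. The strip $S$ is connected, passes through $p_0'$, and must reach its corners $p,q$, which lie in $S_\Lambda$ or near infinity and hence outside $B_{r_0}(p_0')$. Therefore $S$ exits the ball, and $\Gamma_r\neq\emptyset$ for every $r\le r_0$.

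We split into two cases.
\begin{itemize}
\item[(i)] Suppose $S\cap B_{\delta C_0/8}(p_0')$ contains a point $x$ with $d(x,L_\delta^\dagger\cup L_\delta')\geqslant c\delta$. Then the interior monotonicity formula gives $\operatorname{Area}(S\cap B_{c\delta}(x))\geqslant \pi c^2\delta^2$.
\item[(ii)] Otherwise $S$ hugs $L_\delta'$ near $p_0'$. In this case we use the boundary monotonicity formula for holomorphic curves with Lagrangian boundary, which is valid because $L_\delta'$ has $C^{1,\alpha}$-bounded geometry at scale $\sim 1$ near $q_0$. It gives $\operatorname{Area}(S\cap B_s(p_0'))\geqslant c' s^2$ for all $s\le \frac{\delta}{4}C_0$.
\end{itemize}
In either case,
\[
a\!\left(\tfrac{\delta}{4}C_0\right)\geqslant c''\delta^2
\]
with $c''$ independent of $\eta$ and $\delta$. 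This contradicts $a(r_1)\leqslant C\eta^2\delta^2$ once $\eta_0^2<c''/C$, since $r_1\geqslant \frac{\delta}{4}C_0$ and $a$ is non-decreasing.

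\textbf{Main obstacle.} The hard step is the boundary monotonicity in case (ii) with constants uniform in $\delta$ and $\varepsilon$. The naive boundary monotonicity needs a reflection or a totally real extension across $L_\delta'$, valid up to the scale where $L_\delta'$ is flat. Near $q_0$ that scale is $O(1)$, since $\delta\nabla^2 v$ is bounded, so the constant is uniform. A second concern is that $S$ might be broken; we handle this by applying the argument to the component passing through $p_0'$. If the reflection route is too delicate, a simpler alternative is available. Apply the preceding proposition at every $r\in[\frac{\delta}{4}C_0,\frac{\delta}{2}C_0]$: on a full-measure subset either $\mathcal{H}^1(\Gamma_r)\geqslant\eta\delta$, or area is at most $C\eta^2\delta^2$. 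Then combine with the isoperimetric filling argument, which gives the differential inequality $a(r)\leqslant C\,a'(r)^2$, i.e.\ $a'\geqslant c\sqrt a$. Integrating over $[\frac{\delta}{4}C_0,\frac{\delta}{2}C_0]$ from a positive starting area yields $a\gtrsim\delta^2$. This again contradicts smallness, so $\mathcal{H}^1(\Gamma_r)\geqslant\eta\delta$ for a.e. such $r$.
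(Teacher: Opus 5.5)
Your argument is correct and is essentially the paper's. Inside $B^{\C^n}_{\frac{\delta}{4}C_0}(p_0')$ the boundary of $S$ lies only on (one sheet of) $L_\delta'$, so the monotonicity formula for holomorphic curves with Lagrangian boundary (the paper cites \cite[Lemma 45]{LiS}) gives $\operatorname{Area}(S\cap B^{\C^n}_{\frac{\delta}{4}C_0}(p_0'))\geqslant c\delta^2$. Since the area is non-decreasing in $r$, alternative (2) of the dichotomy then fails for $\eta_0$ small, which forces alternative (1).

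Your case split (i)/(ii) is unnecessary, because boundary monotonicity centred at $p_0'$ already covers case (i). Your isoperimetric fallback $a\leqslant C(a')^2$ is a self-contained proof of that same monotonicity at scale $\delta$. If you use it, integrate from $0$ rather than from $\frac{\delta}{4}C_0$, so that the bound holds at the stated radius.
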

\begin{proof}
    Note that by \eqref{eqn:distanceoftwolagsnearq0}, inside $B_{\frac{\delta}{4}C_0}^{\C^n}(p_0')$ equipped with standard symplectic form $\omega$ and complex structure $J$, the boundary of $S$ lies only on $L_\delta'$. The condition in the monotonicity formula (\cf \cite[Lemma 45]{LiS}) is satisfied, and consequently,
     \begin{equation}
         \label{eqn:arealowerbound}
         \operatorname{Area}(S\cap B_{\frac{\delta}{4}C_0}^{\C^n}(p_0'))=\int_{S\cap B_{\frac{\delta}{4}C_0}^{\C^n}(p_0')}\omega\geqslant C\left(\frac{\delta}{4}C_0\right)^2=C\delta^2.
     \end{equation} By choosing $\eta_0\ll 1$ such that the second alternative is violated for $r\geqslant \frac{\delta}{4}C_0$ (note that the area function is non-decreasing in $r$), the lemma follows. 
\end{proof}
Fixing a suitable $\eta$, we have
\begin{cor}\label{cor:arealowerbound}
    For $\delta\ll 1$, we have $\operatorname{Area}(S\cap B_{r_0}^{\C^n}(p_0'))\geqslant C\eta  r_0 \delta$. 
\end{cor}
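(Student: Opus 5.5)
The plan is to integrate the length lower bound from the preceding lemma against the radius, using the coarea formula for the distance function $\rho_{p_0'}$ restricted to $S$. Since $|\nabla^S\rho_{p_0'}|\leqslant 1$ on $S$, the coarea formula gives
\begin{equation*}
\operatorname{Area}(S\cap B_{r_0}^{\C^n}(p_0'))\geqslant \int_{S\cap B_{r_0}^{\C^n}(p_0')}|\nabla^S\rho_{p_0'}|\,d\mathcal{H}^2=\int_0^{r_0}\mathcal{H}^1(\Gamma_r)\,dr .
\end{equation*}
For holomorphic $S$ and the standard structure $(\omega,J)$, $\omega$ restricts to the area form. This identification holds because $T=T(\delta)$ has been chosen so that the Calabi--Yau structure is standard on a ball containing $B^{\C^n}_{r_0}(p_0')$. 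If $S$ is broken, we apply this componentwise and sum, using that all components have nonnegative area.

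Next, the lower bound: by the previous lemma, for a.e. $r\in[\frac{\delta}{4}C_0,r_0]$ we have $\mathcal{H}^1(\Gamma_r)\geqslant \eta\delta$, with $\eta\leqslant\eta_0$ independent of $\delta$. Dropping the contribution from $r<\frac{\delta}{4}C_0$ gives
\begin{equation*}
\operatorname{Area}(S\cap B_{r_0}^{\C^n}(p_0'))\geqslant \eta\delta\Bigl(r_0-\frac{\delta}{4}C_0\Bigr)\geqslant \frac12\eta r_0\delta,
\end{equation*}
once $\delta\leqslant 2r_0/C_0$, which holds for $\delta\ll1$ because $r_0,C_0$ are of order $\mathcal{O}(1)$ and independent of $\delta$. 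This yields the corollary with $C=\frac12$.

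The only point needing care, and the main obstacle, is that the previous lemma's dichotomy is applied uniformly for all $r$ up to $r_0$. That requires the separation estimate \eqref{eqn:distanceoftwolagsnearq0} and the sheet separation to hold on the whole ball $B^{\C^n}_{r_0}(p_0')$, and not just near $p_0'$. I would check that $B^{\C^n}_{r_0}(p_0')\cap(L_\delta^\dagger\cup L_\delta')$ projects into $B_{r_0}(q_0)$, where those estimates were established. I would also check that the disk-filling arcs $\sigma_\gamma$ remain over $B_{2r_0}(q_0)$, where the Lagrangian sheets are topological disks of bounded geometry; this holds because their lengths are $\lesssim\eta\delta\ll r_0$. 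With these checks in place, combining with the area upper bound $C\delta\Lambda^{-1/4}+\Psi(\varepsilon|\cdots)$ and letting $\varepsilon\to0$, then $\Lambda\to\infty$, gives the desired contradiction.
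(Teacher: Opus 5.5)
Your proof is correct and follows the paper's argument: you apply the coarea formula to $\rho_{p_0'}$ on $S$ and integrate the length bound $\mathcal{H}^1(\Gamma_r)\geqslant\eta\delta$, valid for a.e. $r\in[\frac{\delta}{4}C_0,r_0]$, over that range of radii. The only difference is cosmetic. You use the integrated coarea inequality directly and discard the region $r<\frac{\delta}{4}C_0$, whereas the paper differentiates $f(r)$ (relying on its absolute continuity) and adds the bound $f(\frac{\delta}{4}C_0)\geqslant C\delta^2$, which the final inequality does not actually need.
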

\begin{proof}
    The function $f(r):=\operatorname{Area}(S\cap B_{r}^{\C^n}(p_0'))$ is non-decreasing and absolutely continuous in $r$. The co-area formula, applied to $\rho_{p'_0}=dist(\cdot, p_0')$, implies
    $$f'(r)=\int_{S\cap \partial B_r^{\C^n}(p_0')}\frac{1}{|\nabla^S\rho_{p_0'}|}d\ell\geqslant \mathcal{H}^1(\Gamma_r)\geqslant \eta \delta,\quad \textit{for a.e. } r\geqslant \frac{\delta}{4}C_0.$$ 
    Combining this with \eqref{eqn:arealowerbound} and integrating over $[\frac{\delta}{4}C_0,r_0]$, we conclude
    $$f(r_0)\geqslant f\left(\frac{\delta}{4}C_0\right)+\int_{\frac{\delta}{4}C_0}^{r_0}f'\geqslant C\delta^2+\eta\delta\left(r_0-\frac{\delta}{4}C_0\right)\geqslant C\eta  r_0 \delta,$$
    for $\delta\ll 1$. 
\end{proof}
\begin{proof}[Proof of Proposition \ref{prop:uvagreeatq0}]
    From Corollary \ref{cor:potentialdiffsmallest} and \ref{cor:arealowerbound}, we know

    $$C\eta  r_0 \delta\leqslant\operatorname{Area}(S\cap B_{r_0}^{\C^n}(p_0'))\leqslant \operatorname{Area}(S)\leqslant C\delta \Lambda^{-1/4}+\Psi(\varepsilon|\delta,u,v,\Lambda),$$
    which is a contradiction once we sequentially take sufficiently large $\Lambda\gg 1$ and $\delta\ll 1$ as above, and let $\varepsilon\to 0^+$. 
\end{proof}
Since the choice of $q$ is arbitrary in Proposition \ref{prop:uvagreeatq0}, we get a contradiction to the hypothesis that $L_\delta $ and $L_\delta'$ do not agree with each other on an open set. Then the main theorem \ref{mainthm} follows from the unique continuation property. 
\begin{cor}[Theorem \ref{mainthm}]
    We have $u=v$ as $\Z_2$-harmonic functions. 
\end{cor}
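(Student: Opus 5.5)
The plan is to upgrade Proposition \ref{prop:uvagreeatq0} from a pointwise statement to the identity $u=\pm v$, using unique continuation for harmonic functions and then for the branched structure. Fix the region $W:=\{q\in\R^n:\rho(q)\geqslant 1,\ \rho'(q)\geqslant 1\}$. This set is nonempty and contains the complement of a large ball, since $\Sigma,\Sigma'$ are compact. Proposition \ref{prop:uvagreeatq0} gives, for every $q_0\in W$, $du(q_0)=dv(q_0)$ or $du(q_0)=-dv(q_0)$ for suitable local branches. On the unbounded connected component $W_\infty$ of $W$ (outside a large ball), $\R^n\setminus(\Sigma\cup\Sigma')$ is simply connected enough to pick single-valued branches. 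There the closed sets $\{du=dv\}$ and $\{du=-dv\}$ cover $W_\infty$. By Baire, one of them has nonempty interior.

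Since $u\mp v$ is harmonic on an open set of a connected region where both are single-valued, the vanishing of $d(u\mp v)$ on an open set forces $d(u\mp v)\equiv0$ on the whole connected domain of single-valuedness. Hence $u=\pm v+c$ near infinity. Both functions have the same asymptote $\pm(V-M_0)+\mathcal{O}(|x|^{\mu})$, because $v$ was chosen by Proposition \ref{prop: para-match}, Corollary \ref{cor:indexv} and Corollary \ref{cor:M0>0} to match $V$ and $M_0$. So the constant $c$ is zero once the sign is fixed compatibly at infinity.

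Next I propagate $u=\pm v$ into all of $\R^n\setminus(\Sigma\cup\Sigma')$ by analytic continuation along paths. Both $u$ and $v$ are real-analytic sections of flat line bundles there, and they agree on an open set, so they agree along any path, with their monodromies transported compatibly. The remaining step, which I expect to be the main obstacle, is to show $\Sigma=\Sigma'$ and that the two flat bundles coincide.

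\begin{itemize}
\item If some point $p\in\Sigma\setminus\Sigma'$, continue $v$ (single-valued near $p$) along a small loop linking $\Sigma$ near $p$. The section $u$ changes sign around this loop while $v$ does not. Unless $u\equiv0$ near $p$, this contradicts $u=\pm v$ near $p$. The case $u\equiv 0$ is excluded by Corollary \ref{cor:freqofdu}.
\item Symmetrically, $\Sigma'\subset\Sigma$.
\end{itemize}

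So $\Sigma=\Sigma'$, the flat bundles agree, and $u=\pm v$ on the complement of $\Sigma$, hence everywhere by continuity. Finally, I would absorb the sign and the earlier normalisations (translation, rotation, and the switch in item (4) of Remark \ref{rmk:scaling}) into a rigid motion, which identifies $u$ with $g_{\bmh,\delta}$ as claimed.
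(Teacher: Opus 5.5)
Your proof is correct and follows the same skeleton as the paper's. Proposition \ref{prop:uvagreeatq0} plus unique continuation gives $du=\pm dv$ away from $\Sigma\cup\Sigma'$, the branch sets are then identified, and the common asymptote $\pm(V-M_0)$ removes the additive constant.

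The step you do differently is $\Sigma=\Sigma'$.
\begin{itemize}
\item The paper gets it in one line. $|du|$ and $|dv|$ are continuous and agree on a dense set, so $\Sigma=|du|^{-1}(0)=|dv|^{-1}(0)=\Sigma'$, using Corollary \ref{cor:freqofdu}(2) for both $u$ and $v$.
\item Your monodromy argument continues $u=v$ around a meridian of $p\in\Sigma\setminus\Sigma'$. This forces $u=-u$, hence $u\equiv 0$, on an open set, so it only needs $u,v\not\equiv 0$.
\end{itemize}
Your route also makes explicit two things the paper compresses into ``unique continuation'' and ``a simple connectedness argument''. One is the Baire step that turns pointwise agreement up to sign into agreement on an open set. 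The other is the identification of the two flat line bundles.

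One small precision: the unbounded component of $W$ need not be simply connected, since it may contain loops linking $\Sigma$. You should choose the single-valued branches on $\R^n\setminus\bar B_R$ for $R$ large, which is simply connected because $n\geqslant 3$. Your parenthetical suggests this is what you intend.
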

\begin{proof}
   The unique continuation property (on each contractible ball) implies $du$ and $dv$ agree away from both branching sets $\Sigma\cup \Sigma'$. As they are continuous by structure Theorem \ref{thm:structureofz2}, it follows that 
   $\Sigma=|du|^{-1}(0)=|dv|^{-1}(0)=\Sigma'$. After choosing a single branch at infinity of $\R^n$, we know $u-v \sim  \mathcal{O}(|x|^\mu)$. A simple connectedness argument implies $u \equiv v$, completing the proof.
\end{proof}
We end with listing two interesting related questions,
\begin{itemize}
    \item Imagi--Joyce--Oliveira dos Santos also proves the uniqueness for the self-expanders of the Lagrangian mean curvature flow constructed in \cite{Joyce-Lee-Tsui}. In our situation, one can extract families of $\Z_2$-functions satisfying the shifted Laplacian equation
    $$\Delta f+\alpha(x\cdot\nabla f-2f)=0,\quad \alpha> 0.$$
    Then $u(x,t):=2\alpha tf\left(\frac{x}{\sqrt{2\alpha t}}\right)$ solves the heat equation $\partial_t u=\Delta u$. It would be interesting to generalise Theorem \ref{mainthm} to these families of self-similar solutions to the ``$\Z_2$-heat equation" using a very similar technique and study the relation with singularity formations of ``$\Z_2$-flows", although such a parabolic theory has not been developed yet.
    \item It is natural to ask what happens when the asymptotic polynomial $V$ is allowed to be degenerate. The simplest case is when $\partial_{x_k}V=0$ for some coordinate direction in $\R^n$, then a maximum principle argument implies that $|\partial_k u|\equiv0$, and the problem is reduced to a lower dimension. 
\end{itemize}

\bibliographystyle{alpha}
\bibliography{ref}

@article{Lockhart-McOwen,
  author       = {Lockhart, Robert B. and McOwen, Robert C.},
  title        = {Elliptic differential operators on noncompact manifolds},
  journal      = {Annali della Scuola Normale Superiore di Pisa. Classe di Scienze},
  series       = {Serie IV},
  volume       = {12},
  number       = {3},
  pages        = {409--447},
  year         = {1985},
  url          = {https://www.numdam.org/item/ASNSP_1985_4_12_3_409_0/}
}

@book{harvey1990spinors,
  title={Spinors and calibrations},
  author={Harvey, F Reese},
  volume={8},
  year={1990},
  publisher={Elsevier}
}

@article{haydys2015compactness,
  title={A compactness theorem for the Seiberg--Witten equation with multiple spinors in dimension three},
  author={Haydys, Andriy and Walpuski, Thomas},
  journal={Geometric and Functional Analysis},
  volume={25},
  number={6},
  pages={1799--1821},
  year={2015},
  publisher={Springer}
}

@book{DeLellis-Spadaro,
  author       = {De Lellis, Camillo and Spadaro, Emanuele Nunzio},
  title        = {{$Q$}-valued functions revisited},
  series       = {Memoirs of the American Mathematical Society},
  volume       = {211},
  number       = {991},
  publisher    = {American Mathematical Society},
  address      = {Providence, RI},
  pages        = {vi+79},
  year         = {2011},
  doi          = {10.1090/S0065-9266-10-00607-1},
  eprint       = {0803.0060},
  archivePrefix= {arXiv},
  primaryClass = {math.AP}
}

@article{Donaldson-Deformation,
  author       = {Donaldson, Simon K.},
  title        = {Deformations of multivalued harmonic functions},
  journal      = {The Quarterly Journal of Mathematics},
  volume       = {72},
  number       = {1--2},
  pages        = {199--235},
  year         = {2021},
  doi          = {10.1093/qmath/haab018},
  eprint       = {1912.08274},
  archivePrefix= {arXiv},
  primaryClass = {math.DG}
}

@misc{Donaldson-Twistor,
  author       = {Donaldson, Simon K.},
  title        = {Twistor construction of some multivalued harmonic functions on {${\bf R}^{3}$}},
  year         = {2025},
  eprint       = {2504.12716},
  archivePrefix= {arXiv},
  primaryClass = {math.DG},
  note         = {arXiv:2504.12716}
}

@incollection{Donaldson-Adiabatic,
  author       = {Donaldson, Simon K.},
  title        = {Adiabatic limits of co-associative {Kovalev--Lefschetz} fibrations},
  booktitle    = {Algebra, Geometry, and Physics in the 21st Century},
  series       = {Progress in Mathematics},
  volume       = {324},
  pages        = {1--29},
  publisher    = {Birkh{\"a}user/Springer},
  year         = {2017},
  doi          = {10.1007/978-3-319-59939-7_1},
  eprint       = {1603.08391},
  archivePrefix= {arXiv},
  primaryClass = {math.DG}
}

@misc{Yan-Construction,
  author       = {Yan, Dashen},
  title        = {A construction of non-degenerate {$\mathbb{Z}_{2}$}-harmonic functions on {$\mathbb{R}^{n}$}},
  year         = {2025},
  eprint       = {2503.19286},
  archivePrefix= {arXiv},
  primaryClass = {math.DG},
  note         = {arXiv:2503.19286}
}

@misc{Sun-Pell,
  author       = {Sun, Weifeng},
  title        = {On {$Z_2$} harmonic functions on {$\mathbb{R}^2$} and the polynomial {Pell}'s equation},
  year         = {2022},
  eprint       = {2209.11893},
  archivePrefix= {arXiv},
  primaryClass = {math.DG},
  note         = {arXiv:2209.11893}
}

@article{Haydys-Mazzeo-Takahashi-Examples,
  author       = {Haydys, Andriy and Mazzeo, Rafe and Takahashi, Ryosuke},
  title        = {New examples of {$\mathbb{Z}/{2}$}-harmonic {$1$}-forms and their deformations},
  journal      = {Geometriae Dedicata},
  volume       = {219},
  number       = {2},
  pages        = {30},
  year         = {2025},
  doi          = {10.1007/s10711-025-00992-w},
  eprint       = {2307.06227},
  archivePrefix= {arXiv},
  primaryClass = {math.DG}
}

@misc{Haydys-Mazzeo-Takahashi-IndexGraph,
  author       = {Haydys, Andriy and Mazzeo, Rafe and Takahashi, Ryosuke},
  title        = {An index theorem for {$\mathbb{Z}/2$}-harmonic spinors branching along a graph},
  year         = {2023},
  eprint       = {2310.15295},
  archivePrefix= {arXiv},
  primaryClass = {math.DG},
  note         = {arXiv:2310.15295}
}

@article{IJS,
  author       = {Imagi, Yohsuke and Joyce, Dominic and Oliveira dos Santos, Joana},
  title        = {Uniqueness results for special {Lagrangians} and {Lagrangian} mean curvature flow expanders in {${\mathbb C}^{m}$}},
  journal      = {Duke Mathematical Journal},
  volume       = {165},
  number       = {5},
  pages        = {847--933},
  year         = {2016},
  doi          = {10.1215/00127094-3167275},
  eprint       = {1404.0271},
  archivePrefix= {arXiv},
  primaryClass = {math.DG}
}

@article{Abouzaid-Smith,
  author       = {Abouzaid, Mohammed and Smith, Ivan},
  title        = {Exact {Lagrangians} in plumbings},
  journal      = {Geometric and Functional Analysis},
  volume       = {22},
  number       = {4},
  pages        = {785--831},
  year         = {2012},
  doi          = {10.1007/s00039-012-0162-y},
  eprint       = {1107.0129},
  archivePrefix= {arXiv},
  primaryClass = {math.SG}
}

@article{Lawlor,
  author       = {Lawlor, Gary},
  title        = {The angle criterion},
  journal      = {Inventiones Mathematicae},
  volume       = {95},
  number       = {2},
  pages        = {437--446},
  year         = {1989},
  doi          = {10.1007/BF01393905}
}

@misc{LiS,
  author       = {Li, Yang and Sz{\'e}kelyhidi, G{\'a}bor},
  title        = {Singularity formations in {Lagrangian} mean curvature flow},
  year         = {2024},
  eprint       = {2410.22172},
  archivePrefix= {arXiv},
  primaryClass = {math.DG},
  note         = {arXiv:2410.22172}
}

@article{He-BranchedSLag,
  author       = {He, Siqi},
  title        = {The branched deformations of the special {Lagrangian} submanifolds},
  journal      = {Geometric and Functional Analysis},
  volume       = {33},
  pages        = {1266--1321},
  year         = {2023},
  doi          = {10.1007/s00039-023-00645-8},
  eprint       = {2202.12282},
  archivePrefix= {arXiv},
  primaryClass = {math.DG},
}

@article{Joyce-Lee-Tsui,
  author       = {Joyce, Dominic and Lee, Yng-Ing and Tsui, Mao-Pei},
  title        = {Self-similar solutions and translating solitons for {Lagrangian} mean curvature flow},
  journal      = {Journal of Differential Geometry},
  volume       = {84},
  number       = {1},
  pages        = {127--161},
  year         = {2010},
  doi          = {10.4310/jdg/1271271795},
  eprint       = {0801.3721},
  archivePrefix= {arXiv},
  primaryClass = {math.DG}
}

@article{Takahashi-Moduli,
  author       = {Takahashi, Ryosuke},
  title        = {The moduli space of {$S^1$}-type zero loci for {$\mathbb{Z}/2$}-harmonic spinors in dimension 3},
  journal      = {Communications in Analysis and Geometry},
  volume       = {31},
  number       = {1},
  pages        = {119--242},
  year         = {2023},
  doi          = {10.4310/CAG.2023.V31.N1.A5},
  eprint       = {1503.00767},
  archivePrefix= {arXiv},
  primaryClass = {math.DG},
}

@article{Takahashi-Index,
  author       = {Takahashi, Ryosuke},
  title        = {Index theorem for {$\mathbb{Z}/{2}$}-harmonic spinors},
  journal      = {Mathematical Research Letters},
  volume       = {25},
  number       = {5},
  pages        = {1645--1671},
  year         = {2018},
  doi          = {10.4310/MRL.2018.v25.n5.a13},
  eprint       = {1705.01954},
  archivePrefix= {arXiv},
  primaryClass = {math.DG}
}

@article{Parker-Deformation,
  author       = {Parker, Gregory J.},
  title        = {Deformations of {$\mathbb Z_2$}-harmonic spinors on {$3$}-manifolds},
  journal      = {Geometric and Functional Analysis},
  volume       = {36},
  number       = {1},
  pages        = {201--300},
  year         = {2026},
  doi          = {10.1007/s00039-026-00729-1},
  eprint       = {2301.06245},
  archivePrefix= {arXiv},
  primaryClass = {math.DG}}

@article{Doan-Walpuski,
  author       = {Doan, Aleksander and Walpuski, Thomas},
  title        = {On the existence of harmonic {$\mathbb{Z}_{2}$} spinors},
  journal      = {Journal of Differential Geometry},
  volume       = {117},
  number       = {3},
  pages        = {395--449},
  year         = {2021},
  doi          = {10.4310/jdg/1615487003},
  eprint       = {1710.06781},
  archivePrefix= {arXiv},
  primaryClass = {math.DG}}

@misc{He-Parker,
  author       = {He, Siqi and Parker, Gregory J.},
  title        = {{$\mathbb Z_2$}-harmonic spinors and {$1$}-forms on connected sums and torus sums of {$3$}-manifolds},
  year         = {2024},
  eprint       = {2407.10922},
  archivePrefix= {arXiv},
  primaryClass = {math.DG},
  note         = {arXiv:2407.10922}
}

@misc{Salm,
  author       = {Salm, Willem Adriaan},
  title        = {Construction of {$\mathbb{Z}_2$}-harmonic {$1$}-forms on closed {$3$}-manifolds with long cylindrical necks},
  year         = {2024},
  eprint       = {2410.07015},
  archivePrefix= {arXiv},
  primaryClass = {math.DG},
  note         = {arXiv:2410.07015}
}

@article{Taubes-PSL2C,
  author       = {Taubes, Clifford Henry},
  title        = {{$PSL(2;\mathbb C)$} connections on {$3$}-manifolds with {$L^2$} bounds on curvature},
  journal      = {Cambridge Journal of Mathematics},
  volume       = {1},
  number       = {2},
  pages        = {239--397},
  year         = {2013},
  doi          = {10.4310/CJM.2013.v1.n2.a2},
  eprint       = {1205.0514},
  archivePrefix= {arXiv},
  primaryClass = {math.DG}}

@book{Taubes-ZeroLoci,
  author       = {Taubes, Clifford Henry},
  title        = {The zero loci of {$\mathbb{Z}/2$} harmonic spinors in dimension {$2$}, {$3$} and {$4$}},
  eprint       = {1407.6206},
  archivePrefix= {arXiv},
  primaryClass = {math.DG},
    year         = {2014},
  note         = {arXiv:1407.6206}
}

@incollection{Taubes-Wu,
  author       = {Taubes, Clifford Henry and Wu, Yingying},
  title        = {Examples of singularity models for {$\mathbb{Z}/2$} harmonic {$1$}-forms and spinors in dimension three},
  booktitle    = {Proceedings of the G{\"o}kova Geometry-Topology Conferences 2018/2019},
  pages        = {37--66},
  publisher    = {International Press},
  year         = {2020},
  eprint       = {2001.00227},
  archivePrefix= {arXiv},
  primaryClass = {math.DG}
}

@misc{Chen-He,
  author       = {Chen, Jiahuang and He, Siqi},
  title        = {On the existence and rigidity of critical {$Z_2$} eigenvalues},
  year         = {2024},
  eprint       = {2404.05387},
  archivePrefix= {arXiv},
  primaryClass = {math.DG},
  note         = {arXiv:2404.05387}
}

@misc{Haydys-He-Salm,
  author       = {Haydys, Andriy and He, Siqi and Salm, Willem Adriaan},
  title        = {Deformation rigidity for {$\mathbb{Z}/2$} eigensections},
  year         = {2026},
  eprint       = {2604.17044},
  archivePrefix= {arXiv},
  primaryClass = {math.DG},
  note         = {arXiv:2604.17044}
}

\end{document}